\newcommand\la{\leftarrow}
\newcommand\id{\mathrm{id}}
\newcommand\ten{\otimes}
\renewcommand\H{\mathrm{H}}
\newcommand\z{\mathrm{Z}}
\newcommand\N{\mathbb{N}}
\newcommand\Z{\mathbb{Z}}
\newcommand\Q{\mathbb{Q}}
\newcommand\R{\mathbb{R}}
\newcommand\Cx{\mathbb{C}}
\newcommand\bA{\mathbb{A}}
\newcommand\bH{\mathbb{H}}
\newcommand\bL{\mathbb{L}}
\newcommand\bO{\mathbb{O}}
\newcommand\C{\mathcal{C}}
\newcommand\cC{\mathcal{C}}
\newcommand\cD{\mathcal{D}}
\newcommand\cT{\mathcal{T}}
\renewcommand\O{\mathscr{O}}
\newcommand\sA{\mathscr{A}}
\newcommand\sF{\mathscr{F}}
\newcommand\sI{\mathscr{I}}
\newcommand\sO{\mathscr{O}}
\newcommand\fX{\mathfrak{X}}
\newcommand\fY{\mathfrak{Y}}
\renewcommand\L{\Lambda}
\newcommand\m{\mathfrak{m}}
\newcommand\Alg{\mathrm{Alg}}
\newcommand\Mod{\mathrm{Mod}}
\newcommand\Hom{\mathrm{Hom}}
\newcommand\Map{\mathrm{Map}}
\newcommand\map{\mathrm{map}}
\newcommand\loc{\mathrm{loc}}
\newcommand\Aff{\mathrm{Aff}}
\newcommand\Com{\mathrm{Com}}
\newcommand\<{\langle}
\renewcommand\>{\rangle}
\newcommand\Lim{\varprojlim}
\newcommand\LLim{\varinjlim}
\newcommand\ho{\mathrm{ho}\!}
\newcommand\into{\hookrightarrow}
\newcommand\onto{\twoheadrightarrow}
\newcommand\xra{\xrightarrow}
\newcommand\xla{\xleftarrow}
\newcommand\alg{\mathrm{alg}}
\newcommand\bt{\bullet}
\newcommand\by{\times}
\newcommand\et{\acute{\mathrm{e}}\mathrm{t}}
\newcommand\an{\mathrm{an}}
\newcommand\ind{\mathrm{ind}}
\newcommand\pro{\mathrm{pro}}
\newcommand\pd{\partial}
\newcommand\Coh{\cC\mathit{oh}}
\newcommand\DCrit{\mathrm{DCrit}}
\renewcommand\alg{\mathrm{alg}}
\newcommand\co{\colon\thinspace}
\newcommand\oC{\mathbf{C}}
\newcommand\oE{\mathbf{E}}
\newcommand\oR{\mathbf{R}}
\newcommand\oP{\mathbf{P}}
\newcommand\oL{\mathbf{L}}
\newcommand\oT{\mathbf{T}}
\newcommand\uleft\underleftarrow
\newcommand\uline\underline
\newcommand\uright\underrightarrow
\newtheorem{theorem}{Theorem}[section]
\newtheorem{proposition}[theorem]{Proposition}
\newtheorem{lemma}[theorem]{Lemma}
\newtheorem*{theorem*}{Theorem}
\newtheorem*{proposition*}{Proposition}
\newtheorem*{corollary*}{Corollary}
\newtheorem*{lemma*}{Lemma}
\newtheorem*{conjecture*}{Conjecture}
\theoremstyle{definition}
\newtheorem{definition}[theorem]{Definition}
\newtheorem*{definition*}{Definition}
\newtheorem*{notation*}{Notation}
\theoremstyle{remark}
\newtheorem{example}[theorem]{Example}
\newtheorem{examples}[theorem]{Examples}
\newtheorem{remark}[theorem]{Remark}
\newtheorem{remarks}[theorem]{Remarks}
\newtheorem{assumption}[theorem]{Assumption}
\newtheorem*{example*}{Example}
\newtheorem*{examples*}{Examples}
\newtheorem*{remark*}{Remark}
\newtheorem*{remarks*}{Remarks}
\newtheorem*{exercise*}{Exercise}
\newtheorem*{property*}{Property}
\newtheorem*{properties*}{Properties}
\begin{document}

\begin{abstract}
We give a formulation for derived analytic geometry built from commutative differential graded algebras equipped with entire functional calculus on their degree $0$ part, a theory well-suited to developing shifted  Poisson structures and quantisations.  In the complex setting, we show that this formulation recovers equivalent derived analytic spaces and stacks to those coming from  Lurie's structured topoi. In non-Archimedean settings, there is a similar comparison, but  for derived dagger analytic spaces and stacks, based on overconvergent functions. 
\end{abstract}

\title{A differential graded model for derived analytic geometry} 

\author{J. P. Pridham}

\maketitle

\section*{Introduction}

In this paper, we develop  a formulation for derived analytic geometry based on differential graded objects, by applying the approach of Carchedi and Roytenberg from \cite{CarchediRoytenbergHomological}. In this case, the objects are commutative differential graded (dg) algebras equipped with entire functional calculus (EFC) on their degree $0$ part. These are examples of dg algebras for a Fermat theory, so share many formal similarities with the dg $\C^{\infty}$ rings used in derived differential geometry.
All commutative Banach (and indeed Fr\'echet) algebras
naturally carry EFC structures,
 with operations given not just by polynomials but by all entire holomorphic functions. 

Entire functional calculus should not be confused with holomorphic functional calculus, the basis of Dubuc and Taubin's analytic rings \cite{DubucTaubin},  Lurie's approach to derived complex analytic geometry \cite{lurieDAG5,lurieDAG9} and Porta and Yu's derived non-Archimedean analytic geometry \cite{PortaYuNonArch}. The difference is that EFC does not include partially defined operations (corresponding to open subspaces of affine space) as part of the structure. Because EFC is the 
Lawvere theory giving the 
closest algebraic approximation to commutative Banach algebras, 
 our approach is almost closer in spirit to that of \cite{BambozziBenBassatKremnizer}. 

Our main results show that entire functional calculus is sufficient to study geometric objects in a derived analytic setting. Proposition \ref{locmodelprop} is the key to many technical constructions, giving a Quillen equivalent model structure on a category of dg algebras equipped with entire functional calculus in which all \'etale maps (i.e. local biholomorphisms) are cofibrations. As in Proposition \ref{DHomprop2}, this gives rise to a functor from our setup to Lurie's derived analytic spaces, even though the latter carry much more structure \emph{a priori}. In a similar vein, Proposition \ref{DHomprop} gives a full and faithful contravariant $\infty$-functor  from Porta's  derived Stein spaces to complex dg algebras with EFC, so our simpler formulation leads to the same objects.

In proving these results, we cannot simply treat the underlying geometry as a black box as in Lurie's approach \cite{lurieDAG5} to derived analytic geometry, but instead need to exploit specific features. The most important ingredient is given by the various classical embedding theorems for Stein spaces \cite{forster,Remmert,wiegmann,kiehlThmAB,luetkebohmert}, allowing us to express analytic spaces locally as equalisers of finite-dimensional affine spaces --- these neighbourhoods correspond to compact objects in the category of rings with entire functional calculus. 
Another key ingredient is given by Proposition \ref{dgshrink} and Assumption \ref{openetale}, isolating the features of open immersions and \'etale maps which allow us to analytically localise or Henselise one of our dg algebras without affecting its homotopy class. These are used in \S \ref{cotsn} to establish that cotangent complexes for Fermat theories  have the properties we desire, notably that those   associated to  \'etale maps of Stein spaces are trivial (Lemma \ref{Cunramlemma}).

Because our setup is formulated for any rational Fermat theory, these results also include  versions  in non-Archimedean analytic geometry. Since we need embedding theorems, we have to be able to reduce to Stein or pro-Stein spaces. This means that we cannot work with arbitrary derived rigid analytic spaces, but we can work with partially proper spaces 
or with derived dagger analytic spaces. 
In other words, the lack of flexibility for our setup compared with \cite{lurieDAG9,PortaYuNonArch} forces us to work with overconvergent functions as in \cite{GrosseKloenne}, but that might be no bad thing.

\tableofcontents

\section{Rings with entire functional calculus}


\subsection{Preliminary properties of  Stein algebras}

Given a complex analytic space $X$, we write $\sO_X$ for the sheaf of  holomorphic functions on $X$, and $\sO(X)$ for the ring $\Gamma(X,\sO_X)$ of global holomorphic functions.

\begin{lemma}\label{flatlemma}
 Given a local biholomorphism $f \co X \to Y$ of Stein spaces, the induced ring homomorphism $f^{\sharp} \co \sO(Y) \to \sO(X)$ is flat.
\end{lemma}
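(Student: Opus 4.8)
The plan is to reduce flatness of $f^{\sharp}$ to a statement about the analytic local rings, where the local biholomorphism hypothesis becomes transparent, and then to propagate this from stalks to global sections using Cartan's Theorems A and B. First I would record the two classical inputs from Stein theory: for any Stein space $Z$ and any point $z$, the map $\sO(Z) \to \sO_{Z,z}$ is flat (indeed $\sO_{Z,z}$ is faithfully flat over the algebraic localisation $\sO(Z)_{\fm_z}$, which is flat over $\sO(Z)$), and for a coherent sheaf $\sF$ on $Z$ there is a natural isomorphism $\Gamma(Z,\sF)\ten_{\sO(Z)}\sO_{Z,z} \cong \sF_z$. The key geometric observation is that, because $f$ is a local biholomorphism, the induced map on stalks $\sO_{Y,f(x)} \to \sO_{X,x}$ is an isomorphism for every $x \in X$; in particular it is flat, and $f$ is a flat morphism of analytic spaces, so $f^{*}$ is exact on coherent sheaves.

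To test flatness of $\sO(Y) \to \sO(X)$ I would use the ideal criterion: for each finitely generated ideal $I = (g_1,\dots,g_k) \subseteq \sO(Y)$ it suffices to show that $I \ten_{\sO(Y)} \sO(X) \to \sO(X)$ is injective. The ideal $I$ is the module of global sections $\Gamma(Y,\sI)$ of the coherent ideal sheaf $\sI = \im(\sO_Y^k \to \sO_Y)$, and the syzygy sheaf $\sR = \ker(\sO_Y^k \to \sO_Y)$ is coherent with $\Gamma(Y,\sR) \to \sO(Y)^k \to I \to 0$ exact by Theorem B. Tensoring this presentation with $\sO(X)$ and comparing it with the sequence obtained by applying $\Gamma(X,-)$ to the exact pulled-back sequence $0 \to f^{*}\sR \to \sO_X^k \to f^{*}\sI \to 0$ (again Theorem B, now on $X$), the injectivity I want becomes exactly the assertion that the natural map $\Gamma(Y,\sR)\ten_{\sO(Y)}\sO(X) \to \Gamma(X, f^{*}\sR)$ is surjective. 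After base change along the flat maps $\sO(X) \to \sO_{X,x}$ and the isomorphisms $\sO_{X,x}\cong\sO_{Y,f(x)}$, both sides acquire the common stalk $\sR_{f(x)}\cong (f^{*}\sR)_x$, so this map becomes an isomorphism after $\ten_{\sO(X)}\sO_{X,x}$ for every $x$.

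The main obstacle is precisely this passage from stalks to global sections: $\sO(X)$ is far from Noetherian and its maximal spectrum contains ``free'' ideals at which stalk-localisation is uninformative, so stalkwise isomorphism of a general module map need not imply it is an isomorphism. The way around this is to keep every module in sight realised as the global sections of a coherent sheaf on $X$, for which Theorem A together with the isomorphism $\Gamma(X,\sF)\ten_{\sO(X)}\sO_{X,x}\cong\sF_x$ guarantees that stalks do detect surjectivity and vanishing. Thus the crux reduces to the base-change statement that $\Gamma(Y,\sF)\ten_{\sO(Y)}\sO(X) \to \Gamma(X,f^{*}\sF)$ is an isomorphism for coherent $\sF$, which I would deduce from the flatness of restriction to Stein open subsets together with the local structure of $f$ as a biholomorphism onto such subsets --- the same Stein-theoretic finiteness results (Cartan's theorems and the embedding theorems invoked later in the paper) that control global sections of coherent sheaves. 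Once this isomorphism is in hand, the ideal criterion is satisfied and $f^{\sharp}$ is flat.
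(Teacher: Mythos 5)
Your reduction via the ideal criterion is sound as far as it goes: the identification $I=\Gamma(Y,\sI)$, the exactness of both rows (Cartan's Theorem B on $Y$ and on $X$, plus exactness of $f^*$), and the diagram chase reducing flatness to surjectivity of $\Gamma(Y,\sR)\ten_{\sO(Y)}\sO(X)\to\Gamma(X,f^*\sR)$ are all correct. But that surjectivity --- which you yourself flag as the crux, and which is in fact \emph{equivalent} to the flatness being proved --- is never established, and the mechanism you invoke for it does not apply. Stalks detect surjectivity for maps \emph{induced by morphisms of coherent sheaves}; your map instead has source $\Gamma(Y,\sR)\ten_{\sO(Y)}\sO(X)$, an abstract $\sO(X)$-module which is not known to be the global sections of any coherent sheaf (that is exactly what is at stake). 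For maps from arbitrary modules, stalkwise surjectivity does not imply surjectivity: the proper ideal $J\subset\sO(\Cx)$ of entire functions vanishing at all but finitely many integers satisfies $J\sO_{\Cx,x}=\sO_{\Cx,x}$ for every $x$ (use $\sin(\pi z)$ and $\sin(\pi z)/(z-n)$), so the map $\bigoplus_{h\in J}\sO(\Cx)\to\Gamma(\Cx,\sO_{\Cx})$ is stalkwise surjective but not surjective. Your fallback is moreover circular: ``flatness of restriction to Stein open subsets'' is itself an instance of the lemma (open immersions of Stein spaces are local biholomorphisms), and even granting it, the passage from local data to global sections over the non-Noetherian ring $\sO(X)$ is the very point left unproved. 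Finally, your syzygy sheaf $\sR$ need not be globally finitely generated (cf.\ Remark \ref{embeddingrmk}: by Forster's theorem this requires bounded fibre dimensions), so even a base-change statement established for globally finitely presented sheaves would not directly cover it.

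The paper's proof is organised precisely to avoid this local-to-global step. It reduces flatness to the statement that $M\mapsto M\ten_{\sO(Y)}\sO(X)$ preserves monomorphisms of \emph{finitely presented} $\sO(Y)$-modules, and then identifies this functor, on finitely presented $M$, with $M\mapsto\Gamma(X,f^*(M\ten_{\sO(Y)}\sO_Y))$: both functors are right exact and agree on finite free modules, hence agree on all finitely presented $M$. Mono-preservation of the composite then follows from Serre (coherent sheaves form an abelian category), Cartan's Theorem B (exactness of global sections, on $Y$ and on $X$), and exactness of $f^*$ --- no stalks, no localisation, and no modules outside the globally finitely presented world; the base-change isomorphism you need is obtained formally rather than pointwise. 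If you wish to keep your ideal-criterion framing, the repair is to replace the stalk argument by this functor comparison, but at that point the syzygy detour becomes unnecessary and you have essentially reproduced the paper's argument.
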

\begin{proof}
 The category of $\sO(Y)$-modules is equivalent to the ind-category of  finitely presented $\sO(Y)$-modules.  It thus suffices to show that the functor $M \mapsto M\ten_{\sO(Y)}\sO(X)$  on the category of finitely presented $\sO(Y)$-modules preserves monomorphisms. 

Serre's Three Lemma implies that the category of  coherent $\sO_Y$-modules is abelian, while
 Cartan's Theorem B \cite[Fundamental Theorem B, \S IV.4, p.124]{GrauertRemmertStein} implies that the functor $\Gamma(Y,-)$ from coherent $\sO_Y$-modules to $\sO(Y)$-modules is exact.  The functor $-\ten_{\sO(Y)}\sO_Y$ is right exact, and sends $\sO(Y)$ to $\sO_Y$, so it follows from exactness that $\Gamma(Y,M\ten_{\sO(Y)}\sO_Y) \cong M$ for any finitely presented $\sO(Y)$-module $M$.
Thus the functor $-\ten_{\sO(Y)}\sO_Y$  must also preserve monomorphisms of finitely presented $\sO(Y)$-modules, since it has  a  quasi-inverse $\Gamma(Y,-)$ on its essential image.

Since $f$ is a local biholomorphism, the functor $f^* \co \Coh(\sO_Y) \to \Coh(\sO_X)$ on coherent sheaves is exact, so we 
 can  factorise $-\ten_{\sO(Y)}\sO(X)$ as the composition $FP\Mod(\sO(Y)) \xra{-\ten_{\sO(Y)}\sO_Y } \Coh(\sO_Y) \xra{f^*} \Coh(\sO_X) \xra{\Gamma(X,-)} \Mod(\sO(X))$ of functors preserving monomorphisms, where $\Mod$ denotes the category of modules and $FP\Mod$ the category of finitely presented modules.
\end{proof}

\begin{definition}\label{cancellative}
 As in \cite{anel}, we say that a class $\oP$ of morphisms in a category satisfies right  cancellation if, for diagrams $X \xra{f} Y \xra{g} Z$,   we have that $g$ lies in $\oP$ whenever $f$ and $g \circ f$ do so. Similarly, a class $\oP$ satisfies left  cancellation if $f$ lies in $\oP$ whenever $g$ and $g \circ f$ do so.
\end{definition}

The following is immediate:
\begin{lemma}\label{Steincancellemma}
 In the category of Stein spaces, the classes of open immersions and of local biholomorphisms are both left cancellative.
\end{lemma}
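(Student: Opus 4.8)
The plan is to prove both statements by the same device, exploiting that each class is closed under composition together with the fact that a morphism in either class is (locally) invertible onto its image. Recall that an open immersion is a biholomorphism onto an open subspace, and a local biholomorphism is a map that restricts to such near every point of its source; in either case, given $X \xra{f} Y \xra{g} Z$ with $g$ and $g\circ f$ in the class, the idea is to write $f$ (locally) as $g^{-1}\circ(g\circ f)$ and observe that this exhibits $f$ as a composite of maps in the class.

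For open immersions I would argue globally. Assuming $g$ and $g\circ f$ are open immersions, the map $g$ restricts to a biholomorphism $g \co Y \xra{\sim} g(Y)$ onto an open $g(Y) \subseteq Z$, while $g\circ f$ is a biholomorphism onto the open set $(g\circ f)(X)\subseteq Z$. Since $(g\circ f)(X)=g(f(X))\subseteq g(Y)$, applying the inverse $g^{-1}\co g(Y)\xra{\sim} Y$ shows both that $f(X)=g^{-1}((g\circ f)(X))$ is open in $Y$ and that $f = g^{-1}\circ(g\circ f)$ is a biholomorphism onto $f(X)$; hence $f$ is an open immersion.

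For local biholomorphisms I would localise. Fixing $x\in X$, I would choose an open $V\ni f(x)$ on which $g$ is biholomorphic onto an open set and an open $U\ni x$ on which $g\circ f$ is biholomorphic onto an open set, then shrink $U$, using continuity of $f$, so that $f(U)\subseteq V$. On $U$ one then has the factorisation $f|_U = (g|_V)^{-1}\circ(g\circ f)|_U$, a composite of biholomorphisms onto open sets, so $f|_U$ is biholomorphic onto an open subset of $V$. As $x$ is arbitrary, $f$ is a local biholomorphism.

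The only point requiring any care is the neighbourhood bookkeeping in the second case, namely arranging $f(U)\subseteq V$ before inverting $g$ so that the factorisation makes sense on $U$; everything else is formal, which is presumably why the statement is recorded as immediate. I do not expect the surjectivity-type failure that would obstruct \emph{right} cancellation to intervene here, precisely because left cancellation only ever inverts $g$ on the image of $f$.
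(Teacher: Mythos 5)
Your proof is correct, and it is exactly the formal argument the paper has in mind: the paper records this lemma as immediate with no written proof, and the standard verification is precisely your device of factoring $f$ as $g^{-1}\circ(g\circ f)$ — globally for open immersions, and locally (after arranging $f(U)\subseteq V$) for local biholomorphisms. The neighbourhood bookkeeping you flag is indeed the only point of substance, and you handle it correctly.
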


\subsection{Complex algebras with entire functional calculus}

We now consider a natural complex-analytic analogue of the $\C^{\infty}$ rings of \cite{dubuc,MoerdijkReyes}. Although these will not model all analytic spaces,  they will suffice for our building blocks after restricting to objects locally of finite presentation.

\begin{definition}\label{EFCdef}
Recall (cf. \cite{pirkovskiiHFG}) that a $\Cx$-algebra $A$ with entire functional calculus (or EFC $\Cx$-algebra for short)  is given by a product-preserving set-valued  functor $\Cx^n \mapsto A^n$  on the category with objects $\{\Cx^n\}_{n \ge 0}$ and morphisms consisting of complex-analytic maps. 

Explicitly, the set $A$ is equipped, for every complex-analytic function $f \in \sO(\Cx^n)$, with an operation $\Phi_f \co A^n \to A$. These operations are required to be compatible in the sense that given functions $g_i \in \sO(\Cx^{m_i})$, we must have 
\[
 \Phi_{f \circ (g_1, \ldots, g_n)}= \Phi_f \circ (\Phi_{g_1}, \ldots, \Phi_{g_n}) \co A^{\sum_{i=1}^n m_i}\to A.  
\]
\end{definition}
Thus an EFC $\Cx$-algebra is a $\Cx$-algebra $A$ with a systematic and consistent way of evaluating expressions of the form $\sum_{m_1, \ldots, m_n=0}^{\infty} \lambda_{m_1, \ldots, m_n} a_1^{m_1}\cdots a_n^{m_n}$ in $A$ whenever the coefficients $\lambda_{m_1, \ldots, m_n} \in \Cx$ satisfy $\lim_{\sum m_i \to \infty } |\lambda_{m_1, \ldots, m_n}|^{1/\sum m_i}= 0$.
In particular, every EFC $\Cx$-algebra is a commutative $\Cx$-algebra, with  addition and multiplication coming from the functions $(x,y) \mapsto x+y$ and
  $(x,y) \mapsto xy$ in $\sO(\Cx^2)$.

\begin{examples}\label{EFCexamples}
 For every complex-analytic space $X$, the set $\sO(X)$ of global holomorphic functions on $X$ naturally has the structure of an EFC $\Cx$-algebra. as does any quotient of $\sO(X)$ by an ideal (see Examples \ref{Fermatexamples} below). 
In particular, Artinian rings with residue field $\Cx$ (for instance  the dual numbers $\Cx[x]/x^2$) can naturally be regarded as EFC $\Cx$-algebras. 

Any filtered colimit of EFC $\Cx$-algebras is naturally an EFC $\Cx$-algebra. This includes examples such as the ring $\Cx^{\N}/c_{00}(\Cx)$ of infinite sequences modulo finite sequences, which does not arise as the ring of holomorphic functions on any analytic space; we might think of it  as the ring functions on the inverse system of cofinite subsets of $\N$, even though the limit of that system is empty.   
\end{examples}

From a categorical perspective, the forgetful functor from EFC $\Cx$-algebras to sets has a left adjoint, 
 which sends a set $S$ to the EFC $\Cx$-algebra
\[
\sO(\Cx^S):=  \LLim_{\substack{T \subset S}\\ \text{finite}} \sO(\Cx^T),
\]
and  EFC $\Cx$-algebras are algebras for the resulting monad structure on the functor $S \mapsto \sO(\Cx^S)$.

The category of EFC $\Cx$-algebras has all small limits and colimits. In particular, there is a coproduct, which we denote by $\odot$, with the property that $\sO(M \by N) \cong \sO(M)\odot\sO(N)$. We also denote pushouts by $A\odot_BC$. Because all quotient rings of an  EFC $\Cx$-algebra are  EFC $\Cx$-algebras as in Examples \ref{EFCexamples}, it follows that if we have surjective EFC $\Cx$-algebra homomorphisms $\sO(\Cx^S) \onto A$, $\sO(\Cx^T) \onto C$, then the pushout of $A \la B \to C$ can be expressed as
\[
 A\odot_BC:= \sO(\Cx^{S \sqcup T})\ten_{(\sO(\Cx^S)\ten\sO(\Cx^T))}(A\ten_BC).
\]


\begin{definition}
We say that an EFC $\Cx$-algebra is finitely presented if it arises as a quotient of $\sO(\Cx^n)$ by a finitely generated ideal, for some finite $n$.
\end{definition}

Since the monad $S \mapsto \sO(\Cx^{S})$ preserves filtered colimits, we immediately have: 
\begin{lemma}\label{indFP}
 The category of EFC $\Cx$-algebras is equivalent to the category of ind-objects of the category of finitely presented EFC $\Cx$-algebras.
\end{lemma}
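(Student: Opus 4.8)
The plan is to identify the finitely presented EFC $\Cx$-algebras (in the sense of the preceding definition) with the \emph{compact} objects of the category $\cC$ of all EFC $\Cx$-algebras, and then to invoke the standard criterion recognising a category with filtered colimits as the ind-completion of a small subcategory of compact generators. Concretely, let $\cA \subseteq \cC$ denote the full subcategory of finitely presented EFC $\Cx$-algebras. This is essentially small, since such an algebra is a quotient $\sO(\Cx^n)/(f_1,\ldots,f_k)$ determined by a natural number $n$ and a finite tuple of elements of the fixed set $\sO(\Cx^n)$. I would then verify two claims, after which the result follows formally: \textbf{(A)} every object of $\cA$ is compact in $\cC$, and \textbf{(B)} every EFC $\Cx$-algebra is a filtered colimit of objects of $\cA$.

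For \textbf{(A)}, the starting point is that $\sO(\Cx^n)$ is the free EFC $\Cx$-algebra on $n$ generators, so $\Hom_{\cC}(\sO(\Cx^n),B) \cong B^n$. Because the monad $S \mapsto \sO(\Cx^S)$ preserves filtered colimits, the forgetful functor $\cC \to \Set$ creates them; since finite products of sets also commute with filtered colimits, $\sO(\Cx^n)$ is compact. Using the pushout description recalled above, a finitely presented algebra $\sO(\Cx^n)/(f_1,\ldots,f_k)$ is the finite colimit $\sO(\Cx^n)\odot_{\sO(\Cx^k)}\Cx$, formed from the map $\sO(\Cx^k)\to\sO(\Cx^n)$ sending the generators to the $f_i$ and the augmentation $\sO(\Cx^k)\to\Cx=\sO(\Cx^0)$ sending them to $0$. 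As $\Cx$ is the initial object and finite colimits of compact objects are compact, every object of $\cA$ is compact.

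For \textbf{(B)}, I would present an arbitrary EFC $\Cx$-algebra $A$ as a quotient $\sO(\Cx^S)/I$ by an ideal $I$ (taking $S$ to be the underlying set of $A$, say), using the structural fact from Examples \ref{EFCexamples} that EFC congruences are exactly quotients by ring-theoretic ideals. Since $\sO(\Cx^S) = \LLim_{T\subset S \text{ finite}} \sO(\Cx^T)$, every element of $I$ lies in some $\sO(\Cx^T)$, so $A$ is the colimit of the system $\sO(\Cx^T)/(F)$ indexed by the directed poset of pairs $(T,F)$ with $T\subseteq S$ finite and $F\subseteq I\cap\sO(\Cx^T)$ finite. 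Each term lies in $\cA$ and the indexing is filtered, giving the desired expression. This is the step where the specific structure of EFC algebras enters (rather than a generic finitary monad), and I expect it to be the main point to pin down carefully, though it is routine once the identification of quotients with ideal quotients is in hand.

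Granting \textbf{(A)} and \textbf{(B)}, the canonical functor $\Ind(\cA)\to\cC$ extending the inclusion is essentially surjective by \textbf{(B)}, and fully faithful because, writing objects of $\Ind(\cA)$ as filtered colimits $\LLim_i A_i$ and $\LLim_j B_j$, compactness of each $A_i$ (from \textbf{(A)}) yields $\Hom_{\cC}(\LLim_i A_i,\LLim_j B_j)\cong \Lim_i\LLim_j\Hom_{\cC}(A_i,B_j)$, which is precisely $\Hom_{\Ind(\cA)}(\LLim_i A_i,\LLim_j B_j)$ since $\cA$ is full. Hence $\Ind(\cA)\simeq\cC$, as required. (Alternatively, one may observe that the Eilenberg--Moore category of a finitary monad on $\Set$ is locally finitely presentable, so automatically equivalent to the ind-category of its compact objects, and then the content reduces entirely to the identification of those compact objects via \textbf{(A)} and \textbf{(B)}.)
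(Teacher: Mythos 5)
Your proof is correct and takes essentially the same approach as the paper: the paper's one-line argument (finitarity of the monad $S \mapsto \sO(\Cx^S)$) is shorthand for exactly your steps \textbf{(A)} and \textbf{(B)}, which the paper later spells out for general Fermat theories (compactness of finitely presented $\oT$-algebras, plus writing $\oT[S]/I$ as a filtered colimit of $\oT[T]/J$ with $T\subset S$ finite and $J \subset I\cap \oT[T]$ finitely generated). Your expanded justification of compactness via finite colimits of free algebras and of the filtered-colimit presentation is accurate, so there is no gap.
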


\begin{definition}
 We say that a coherent sheaf $\sF$ on a Stein space $\sO_X$ is globally finitely generated if it is  generated as a sheaf of  $\O_X$-modules by a finite set of global sections. We say that $\sF$ is globally finitely presented if it is globally finitely generated and the kernel of the induced surjection $\sO_X^{\oplus m} \onto \sF$ is also globally finitely generated. 
\end{definition}

\begin{definition} 
 We say that a Stein space $X$ is finitely embeddable if if it admits a closed embedding  $i \co X \to \Cx^n$ for some finite $n$. We say that a Stein space is 
globally finitely presented if it is finitely embeddable  in such a way that $i_*\sO_X$ is globally finitely presented (equivalently, such that  the defining ideal is  globally finitely generated).
\end{definition}

\begin{remark}\label{embeddingrmk}
 By the Embedding Theorem of  \cite{wiegmann}, a Stein space $X$ is finitely embeddable  
 whenever it is finite dimensional and the tangent dimensions $\dim_{\Cx} \m_x/\m_x^2$ are globally bounded, for $x \in X$ and  $\m_x$ the maximal ideal of $\O_{X,x}$. In particular, by Remmert's Theorem \cite{Remmert}, this applies to all Stein manifolds.

By \cite[Theorem 4.3]{forster}, a coherent sheaf $\sF$ on a finite-dimensional Stein space $Y$  is globally finitely generated if the dimensions $\dim_{\Cx} \sF_y/\m_y\sF_y$ are globally bounded for $y \in Y$. Thus a Stein space $X \subset \Cx^n$ with defining ideal $\sI$ is globally finitely presented if $\sup_{y \in \Cx^n}  \dim_{\Cx} (\sI_y/\m_y\sI_y) < \infty$, and it suffices to take the supremum over $y \in X$ because $\sI_y \cong \O_{\Cx^n,y}$ for $y \notin X$. 

In particular, all Stein manifolds $X$  are  globally finitely presented, since for any closed embedding $X \subset \Cx^n$, we will have $\dim_{\Cx} (\sI_x/\m_x\sI_x) =n -\dim X$ when $x \in X$. In general, a finitely embeddable Stein space need not be globally finitely presented, an example being the closed subspace of $\Cx^2$ defined by the ideal $\bigcap_{n \ge 1}(x-n,y)^n$. However, any Stein space admits an open cover by globally finitely generated Stein spaces.  An example of a Stein space which is not finitely embeddable is given by $\coprod_{n \ge 0} \Cx^n$. 
\end{remark}


  
\begin{lemma}\label{Homlemma}
 Given an analytic space $Y$ and a finitely embeddable Stein space $X$,  the set $\mathrm{Hol}(Y,X)$ of holomorphic maps from $Y$ to $X$ is isomorphic to the set $\Hom_{EFC}(\sO(X),\sO(Y))$ of EFC $\Cx$-algebra homomorphisms from $\sO(X)$ to $\sO(Y)$.
\end{lemma}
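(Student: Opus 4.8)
The plan is to construct the bijection explicitly via pullback of functions, and to reduce the general case to the case of affine space $\Cx^n$ using the finite embedding together with Cartan's Theorems A and B. The natural candidate map sends a holomorphic $\psi \co Y \to X$ to the pullback homomorphism $\psi^{\sharp} \co \sO(X) \to \sO(Y)$; this is an EFC $\Cx$-algebra homomorphism because precomposition with $\psi$ commutes with the EFC operations, these being given by postcomposition with entire functions and composition of holomorphic maps being associative. So the whole content is that $\psi \mapsto \psi^{\sharp}$ is a bijection.

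First I would treat $X = \Cx^n$. Here $\mathrm{Hol}(Y,\Cx^n) = \sO(Y)^n$, since a holomorphic map to $\Cx^n$ is exactly an $n$-tuple of global holomorphic functions on $Y$. On the other hand $\sO(\Cx^n)$ is the free EFC $\Cx$-algebra on the $n$ coordinate functions $z_1,\dots,z_n$, being the value $\sO(\Cx^S)$ of the left adjoint at a set $S$ of cardinality $n$, so the free/forgetful adjunction gives $\Hom_{EFC}(\sO(\Cx^n),\sO(Y)) \cong \sO(Y)^n$, with a homomorphism $\phi$ corresponding to $(\phi(z_1),\dots,\phi(z_n))$. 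Unwinding the EFC structure on $\sO(Y)$ shows that the homomorphism attached to an $n$-tuple $(\psi_1,\dots,\psi_n)$ is precisely $f \mapsto f(\psi_1,\dots,\psi_n) = f\circ\psi$, i.e.\ it is $\psi^{\sharp}$; this settles the case $X = \Cx^n$.

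For general finitely embeddable $X$, fix a closed embedding $i \co X \into \Cx^n$ with defining ideal sheaf $\sI \subset \sO_{\Cx^n}$, and set $I := \Gamma(\Cx^n,\sI)$. Cartan's Theorem B, applied to the exact sequence $0 \to \sI \to \sO_{\Cx^n} \to i_*\sO_X \to 0$ of coherent sheaves on the Stein space $\Cx^n$, gives an isomorphism $\sO(X) \cong \sO(\Cx^n)/I$, which is an isomorphism of EFC $\Cx$-algebras since quotients carry an induced EFC structure (Examples \ref{EFCexamples}). Consequently $\Hom_{EFC}(\sO(X),\sO(Y))$ is identified with the set of EFC homomorphisms $\phi \co \sO(\Cx^n) \to \sO(Y)$ with $\phi(I) = 0$, and under the bijection of the previous step these correspond exactly to those $\psi \in \mathrm{Hol}(Y,\Cx^n)$ for which $f\circ\psi = 0$ in $\sO(Y)$ for every $f \in I$.

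It remains to match this last set with $\mathrm{Hol}(Y,X)$, and this is the step I expect to carry the real weight. Since $i$ is a closed embedding, $\mathrm{Hol}(Y,X)$ is the set of $\psi \co Y \to \Cx^n$ that factor, necessarily uniquely, through $X$, equivalently those for which the pullback map $\psi^*\sI \to \sO_Y$ vanishes. Here Cartan's Theorem A enters: on the Stein space $\Cx^n$ the coherent ideal $\sI$ is generated by its global sections $I$, so at each point $y \in Y$ the image of $\psi^*\sI$ in the stalk $\sO_{Y,y}$ is generated by the germs $(f\circ\psi)_y$ with $f \in I$. Hence the stalkwise vanishing $\psi^*\sI = 0$ is equivalent to the global vanishing $f\circ\psi = 0$ for all $f \in I$. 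This produces the desired identification, and chasing the three steps shows that the resulting composite bijection is indeed $\psi \mapsto \psi^{\sharp}$.
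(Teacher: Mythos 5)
Your proof is correct and takes essentially the same route as the paper's: embed $X$ as a closed subspace of $\Cx^n$, identify $\mathrm{Hol}(Y,\Cx^n)=\sO(Y)^n\cong \Hom_{EFC}(\sO(\Cx^n),\sO(Y))$ via freeness of $\sO(\Cx^n)$, and use Cartan's Theorem A (global generation of $\sI$ by $I$) to match maps factoring through $X$ with homomorphisms annihilating $I$, so that both sides become the same subset of $\sO(Y)^n$. The only difference is that you make explicit the Theorem B step giving $\sO(X)\cong\sO(\Cx^n)/I$, which the paper states without comment (it is justified separately in the proof of Proposition \ref{fpalgprop}).
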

\begin{proof}
Since $X$ is finitely embeddable, there exists a closed embedding $X \subset \Cx^n$, defined by a coherent ideal $\sI$. This gives an injection $\mathrm{Hol}(Y,X)\into \mathrm{Hol}(Y,\Cx^n)=\sO(Y)^n$, which in turn is isomorphic to the set of EFC $\Cx$-algebra homomorphisms from $\sO(\Cx^n)$ to $\sO(Y)$. The subset $\mathrm{Hol}(Y,X)\into \mathrm{Hol}(Y,\Cx^n)$ consists of morphisms which annihilate $\sI$. If we write $I:=\Gamma(\Cx^n,\sI)$, this
 these are precisely the same as EFC $\Cx$-algebra homomorphisms from $\sO(X)= \sO(\Cx^n)/I$ to $\sO(Y)$, since $\sI \cong I\ten_{\sO(\Cx^n)}\sO_{\Cx^n}$ by  Cartan's Theorem A \cite[Fundamental Theorem A, \S IV.4, p.124]{GrauertRemmertStein}.
\end{proof}

The following does not have an analogue for infinite coproducts of affine schemes. It suggests that the finitely embeddable hypothesis in  Lemma \ref{Homlemma} might be relaxed.  
\begin{lemma}\label{Homlemmacoprod}
Given an analytic space $Y$ and finitely embeddable Stein spaces $X_i$ for $i \in \N$, the natural map
\[
 \Hom(Y, \coprod X_i) \to \Hom_{EFC}(\prod_i \sO(X_i), \sO(Y)),
\]
coming from the expression $\sO(\coprod X_i) =\prod_i \sO(X_i)$, is an isomorphism.
\end{lemma}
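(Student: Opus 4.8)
The plan is to build a two-sided inverse to the natural map $f \mapsto f^{\sharp}$. First I would unwind the left-hand side: a holomorphic map $f \co Y \to \coprod_i X_i$ is the same datum as a decomposition of $Y$ into pairwise disjoint clopen pieces $Y_i = f^{-1}(X_i)$ together with holomorphic maps $f_i \co Y_i \to X_i$, since $Y$ need not be connected but any such gluing datum defines a holomorphic map. Under the identification $\sO(\coprod_i X_i) = \prod_i \sO(X_i)$, the homomorphism $f^{\sharp}$ sends the $i$-th idempotent $e_i=(0,\dots,0,1,0,\dots)$ to the indicator function $\mathbf 1_{Y_i}$, and sends a general $(a_i)_i$ to the function that restricts to $f_i^{\sharp}(a_i)$ on $Y_i$. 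So recovering $f$ from $\phi := f^{\sharp}$ amounts to recovering the partition and the individual factorwise maps.

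Given an arbitrary EFC homomorphism $\phi \co \prod_i \sO(X_i) \to \sO(Y)$, I would set $\epsilon_i := \phi(e_i)$. As the $e_i$ are orthogonal idempotents, so are the $\epsilon_i$, and each $\epsilon_i$ cuts out a clopen subset $U_i = \{\epsilon_i = 1\} \subseteq Y$ with the $U_i$ pairwise disjoint. On each $U_i$ the assignment $a_i \mapsto \phi(\iota_i a_i)|_{U_i}$, where $\iota_i a_i = a_i e_i$ is the $i$-th factor inclusion, is a \emph{unital} EFC homomorphism $\sO(X_i) \to \sO(U_i)$: unitality holds because $\epsilon_i|_{U_i}=1$, and one checks compatibility with each $\Phi_g$ after restricting to $U_i$ (the correction term $\Phi_g(\vec 0)(1-\epsilon_i)$ vanishes there). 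Hence Lemma \ref{Homlemma}, which applies since each $X_i$ is finitely embeddable, produces a unique $f_i \in \mathrm{Hol}(U_i, X_i)$. Gluing the $f_i$ along $\{U_i\}$ gives a candidate $f \co Y \to \coprod_i X_i$, and the identity $a e_i = \iota_i(a_i)$ yields $\phi(a)|_{U_i} = f^{\sharp}(a)|_{U_i}$ for every $a=(a_i)_i$; thus $f^{\sharp}=\phi$ as soon as the $U_i$ cover $Y$.

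The hard part is precisely this covering, and I expect it to be the only real obstacle: I must show every $y \in Y$ lies in some $U_i$. Passing to the $\Cx$-algebra character $\chi := \ev_y \circ \phi \co \prod_i \sO(X_i) \to \Cx$, this says $\chi(e_{i_0}) = 1$ for some $i_0$. I would prove it by an invertibility argument applied to the element $\nu = (i)_i$ whose $i$-th component is the constant $i \in \Cx \subset \sO(X_i)$. Writing $c := \chi(\nu)$, the element $\nu - c$ has components $i - c$; if $c \notin \N$ then all components are nonzero, so $\nu - c$ is a unit, contradicting $\chi(\nu - c)=0$. Hence $c = i_0 \in \N$, and then $\nu - c + e_{i_0}$ has all components nonzero (value $1$ in slot $i_0$, value $i-i_0\neq 0$ elsewhere), so it too is a unit, giving $\chi(e_{i_0}) = \chi(\nu - c + e_{i_0}) \neq 0$ and therefore $\chi(e_{i_0})=1$, as $\chi(e_{i_0})$ is an idempotent of $\Cx$.

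This establishes surjectivity of $f \mapsto f^{\sharp}$; injectivity is automatic, since running the construction on $\phi = f^{\sharp}$ returns exactly the partition $U_i = Y_i$ (from $\phi(e_i)=\mathbf 1_{Y_i}$) and the maps $f_i$, so the two constructions are mutually inverse. I would emphasise that the covering step uses nothing beyond the $\Cx$-algebra structure of the product ring, in particular no finite embeddability, which enters only through Lemma \ref{Homlemma} in the factorwise step; this is presumably why the statement suggests the finitely embeddable hypothesis might be relaxed.
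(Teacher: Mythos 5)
Your proof is correct, and while it shares the paper's skeleton---decompose $Y$ into clopen pieces and then apply Lemma \ref{Homlemma} factorwise---the crucial step, showing that the pieces cover $Y$, is done by a genuinely different argument. The paper applies the entire functional calculus directly to the element $a=(1,2,3,\ldots)$: since $\Phi_{\exp(2\pi iz)-1}(a)=0$, any EFC homomorphism $\theta$ sends $a$ to a holomorphic function $\theta(a)\co Y\to\Cx$ taking values in the discrete Stein space $\Z$, and $Y$ is then decomposed as $\coprod_i \theta(a)^{-1}(i)$. You instead evaluate at each point $y\in Y$ and exploit that the character $\chi=\ev_y\circ\phi$ is a unital $\Cx$-algebra map into $\Cx$, hence cannot kill units; this forces $\chi(\nu)\in\N$ and then $\chi(e_{i_0})=1$. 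Your route buys two things. First, as you observe, the covering step needs no EFC structure at all, only the underlying commutative $\Cx$-algebra structure together with the fact that points of complex analytic spaces have residue field exactly $\Cx$; this pinpoints precisely why the scheme-theoretic analogue fails (characters of $\prod_i \sO(X_i)$ on an affine scheme can land in ultraproduct fields, where $\chi(\nu)$ is a ``nonstandard'' value and $\nu-c$ cannot even be formed inside the product ring). Second, your unit argument cleanly rules out values of the index function in $\Z\setminus\N$: the paper's function $\exp(2\pi iz)-1$ vanishes on all of $\Z$, so strictly speaking its decomposition indexed by $\N$ also needs exactly your observation (for $k\in\Z\setminus\N$ the element $a-k$ is a unit, so $\theta(a)^{-1}(k)=\emptyset$), which it leaves implicit. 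What the paper's route buys in exchange is uniformity: in the non-Archimedean analogue (Lemma \ref{HomlemmacoprodNonarch}) the same EFC argument goes through verbatim once $\exp(2\pi iz)-1$ is replaced by a convergent infinite product, whereas your character argument would need an extra step there, since points of rigid analytic spaces have residue fields that are finite extensions of $K$, so one would first have to pass to the minimal polynomial of $\chi(\nu)$ over $K$ before running the unit argument.
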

\begin{proof}
Let $a =(1,2,3,\ldots) \in  \prod_i \sO(X_i)$, so $\Phi_{(\exp(2\pi i z)-1)}(a)=0$. Then, for any EFC $\Cx$-algebra homomorphism $\theta \co \prod_i \sO(X_i)\to \sO(Y)$, we must have $\Phi_{(\exp(2\pi i z)-1)}(\theta a)=0$, so the holomorphic function $\theta(a) \co Y \to \Cx$ takes values in the discrete Stein space $\Z$ (the vanishing locus of $\exp(2\pi i z)-1$).
This gives a decomposition $Y=\coprod_i Y_i$ with $Y_i= \theta(a)^{-1}(i)$, so $\sO(Y_i)= \sO(Y)/(\theta(a)-i)$. Thus $\theta$ is the product over $i \in \N$ of   EFC $\Cx$-algebra homomorphisms  $\sO(X_i) \to \sO(Y_i)$, so as in Lemma \ref{Homlemma}, it  comes from a unique
Stein space homomorphism
\[
 Y=\coprod_i Y_i \to \coprod X_i.
\]
\end{proof}

\begin{proposition}\label{fpalgprop}
 The functor $X \mapsto \sO(X)=\Gamma(X,\sO_X)$ gives  a contravariant equivalence of categories between globally finitely presented Stein spaces and finitely presented EFC $\Cx$-algebras. 
\end{proposition}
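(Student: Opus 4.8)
The plan is to verify that the contravariant functor $X \mapsto \sO(X)$ is well-defined (i.e.\ that it lands in finitely presented EFC $\Cx$-algebras), fully faithful, and essentially surjective; these three facts together give the asserted equivalence. Full faithfulness is immediate from Lemma \ref{Homlemma}: for globally finitely presented Stein spaces $X, Y$ (both in particular finitely embeddable), that lemma supplies a natural bijection $\mathrm{Hol}(Y,X) \cong \Hom_{EFC}(\sO(X),\sO(Y))$, which is precisely the statement that $\sO$ induces bijections on Hom-sets in the opposite direction.

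For well-definedness I would fix a globally finitely presented Stein space $X$, choose a closed embedding $i \co X \into \Cx^n$ whose defining ideal sheaf $\sI$ is generated by finitely many global sections $f_1, \ldots, f_k \in \sO(\Cx^n)$, and compute $\sO(X)$. Writing $I := (f_1, \ldots, f_k) \subset \sO(\Cx^n)$ for the ideal they generate, the crux is the identity $\Gamma(\Cx^n, \sI) = I$, together with $\sO(X) \cong \sO(\Cx^n)/\Gamma(\Cx^n,\sI)$. Both follow from Cartan's Theorem B: since $\sI$ is coherent, being the image of $\sO_{\Cx^n}^k \xra{(f_i)} \sO_{\Cx^n}$, the functor $\Gamma(\Cx^n,-)$ is exact on coherent sheaves, so applying it to the surjection $\sO_{\Cx^n}^k \onto \sI$ gives $\Gamma(\Cx^n,\sI) = I$, and applying it to the short exact sequence $0 \to \sI \to \sO_{\Cx^n} \to i_*\sO_X \to 0$ gives $\sO(X) \cong \sO(\Cx^n)/I$. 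Hence $\sO(X)$ is finitely presented.

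For essential surjectivity I would run this construction in reverse. Given a finitely presented EFC $\Cx$-algebra $A = \sO(\Cx^n)/I$ with $I = (f_1, \ldots, f_k)$, let $\sI \subset \sO_{\Cx^n}$ be the (coherent) ideal sheaf generated by $f_1, \ldots, f_k$, and let $X \subset \Cx^n$ be the closed analytic subspace it defines. As a closed subspace of $\Cx^n$, the space $X$ is Stein, and it is globally finitely presented by construction, its defining ideal being globally finitely generated. The same application of Cartan's Theorem B as above then yields $\sO(X) \cong \sO(\Cx^n)/I = A$, so $A$ lies in the essential image.

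The genuinely analytic content of the argument is concentrated in the identity $\Gamma(\Cx^n,\sI) = I$: a priori the ideal sheaf generated by $f_1, \ldots, f_k$ could acquire global sections not lying in the ideal they generate in $\sO(\Cx^n)$, and it is Cartan's Theorem B (exactness of global sections for coherent sheaves on the Stein space $\Cx^n$) that rules this out, exactly as in the proof of Lemma \ref{Homlemma}. Everything else is formal once this, Lemma \ref{Homlemma}, and the coherence of ideal sheaves generated by finitely many sections are in hand; I would regard this coherence-and-exactness step as the main point to check, the remaining verifications (Steinness of closed subspaces of $\Cx^n$, and compatibility of the two global-section computations) being standard.
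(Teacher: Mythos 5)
Your proposal is correct and follows essentially the same route as the paper: full faithfulness via Lemma \ref{Homlemma}, and Cartan's Theorem B (exactness of $\Gamma(\Cx^n,-)$ on coherent sheaves) to identify $\sO(X)$ with $\sO(\Cx^n)/I$ in both directions. The only cosmetic difference is that for essential surjectivity the paper invokes the fact, from the proof of Lemma \ref{flatlemma}, that $M \cong \Gamma(\Cx^n, M\ten_{\sO(\Cx^n)}\sO_{\Cx^n})$ for finitely presented modules $M$, whereas you apply Theorem B directly to $0 \to \sI \to \sO_{\Cx^n} \to i_*\sO_X \to 0$ together with $\Gamma(\Cx^n,\sI)=I$ --- the same underlying argument in different packaging.
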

\begin{proof}
If $X$ is a globally finitely presented Stein space, take an embedding $X \subset \Cx^n$ with globally finitely generated ideal $\sI$ (necessarily coherent). Cartan's Theorem B \cite[Fundamental Theorem B, \S IV.4, p.124]{GrauertRemmertStein} thus implies that $\H^1(\Cx^n, \sI)=0$, so for $I:=\Gamma(\Cx^n,\sI)$ we have
$
 \sO(X)\cong \sO(\Cx^n)/I
$
by the long exact sequence of cohomology. Since $\sI$ is globally finitely generated, the $\sO(\Cx^n)$-module $I:=\Gamma(\Cx^n,\sI)$ is finitely generated, so $\sO(X)$ is a finitely presented EFC $\Cx$-algebra.

It follows from Lemma \ref{Homlemma} that the functor is full and faithful. To see that it is essentially surjective, take an EFC $\Cx$-algebra $A$ of the form $\sO(\Cx^n)/I$, with $I$ finitely generated, so there exists a surjection $\alpha \co \sO(\Cx^n)^{\oplus m} \onto I$ for some finite $m$. Then the ideal sheaf $\sI:= I\sO_{\Cx^n} \subset \sO_{\Cx^n}$ is necessarily coherent and globally finitely generated, so defines a globally finitely presented Stein space $i \co X \into \Cx^n$. 

Since $A$ is finitely presented as an $\sO(\Cx^n)$-module, it follows from the proof of Lemma \ref{flatlemma} that 
$A \cong \Gamma(\Cx^n, A\ten_{\sO(\Cx^n)}\sO_{\Cx^n})$. But now
\[
 \Gamma(\Cx^n, A\ten_{\sO(\Cx^n)}\sO_{\Cx^n})= \Gamma(\Cx^n, \sO_{\Cx^n}/\sI)=\Gamma(\Cx^n,i_*\sO_X) = \Gamma(X,\O_X),
\]
completing the proof.
\end{proof}

\begin{remarks} 
An analogue of Proposition \ref{fpalgprop} in the $\C^{\infty}$-setting is given by  
\cite[Proposition 10]{dubuc}, which shows that every finitely generated ideal of $\C^{\infty}(\R^n)$ is of local character (i.e. germ-determined), and hence that the $\C^{\infty}$ spectrum functor is full and faithful when restricted to finitely presented $\C^{\infty}$-rings via \cite[Theorem 13]{dubuc}.

Meanwhile, \cite{pirkovskiiHFG} shows that finitely embeddable Stein spaces correspond to holomorphically finitely generated Fr\'echet algebras. These are harder to characterise algebraically than the globally finitely presented Stein spaces, the difficulty being a description of closed ideals $I \le \sO(\Cx^n)$. A necessary condition is that 
\[
I \cong \sO(\Cx^n)\by_{\prod_{x\in \Cx^n} \sO_{\Cx^n,x}}\prod_{x\in \Cx^n} I\sO_{\Cx^n,x},
\]
 but this does not obviously guarantee that the ideal sheaf $I \sO(\Cx^n)$ is coherent.
 \end{remarks}

\begin{lemma}\label{fpmodlemma}
Given a Stein space $X$, the global sections functor gives a contravariant equivalence of categories between globally finitely presented coherent sheaves on $X$ and finitely presented $\sO(X)$-modules. 
 \end{lemma}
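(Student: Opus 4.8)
The plan is to exhibit an explicit quasi-inverse to $\Gamma(X,-)$, namely the functor $M \mapsto M\ten_{\sO(X)}\sO_X$, reusing the machinery assembled in the proof of Lemma \ref{flatlemma}. Recall from there that the coherent $\sO_X$-modules form an abelian category (Serre's Three Lemma), that $\Gamma(X,-)$ is exact on coherent sheaves (Cartan's Theorem B), that $-\ten_{\sO(X)}\sO_X$ is right exact and carries $\sO(X)$ to $\sO_X$, and, crucially, that $\Gamma(X, M\ten_{\sO(X)}\sO_X) \cong M$ for every finitely presented $\sO(X)$-module $M$.

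First I would verify that the two functors land in the claimed subcategories. Given a finitely presented module $M$, choose a presentation $\sO(X)^{\oplus n} \to \sO(X)^{\oplus m} \to M \to 0$; applying the right-exact functor $-\ten_{\sO(X)}\sO_X$ yields a right-exact sequence $\sO_X^{\oplus n} \to \sO_X^{\oplus m} \to M\ten_{\sO(X)}\sO_X \to 0$, whose final term is coherent (being the cokernel of a map of coherent sheaves) and manifestly globally finitely presented, since both the surjection and a generating set of its kernel arise from global sections. Conversely, given a globally finitely presented coherent sheaf $\sF$, the defining data provide a presentation $\sO_X^{\oplus n} \to \sO_X^{\oplus m} \to \sF \to 0$ by maps of finite free sheaves; since all terms and the kernel $\ker(\sO_X^{\oplus m}\onto\sF)$ are coherent, applying the exact functor $\Gamma(X,-)$ produces an exact sequence $\sO(X)^{\oplus n} \to \sO(X)^{\oplus m} \to \Gamma(X,\sF) \to 0$, exhibiting $\Gamma(X,\sF)$ as finitely presented.

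It then remains to supply the two natural isomorphisms making these functors mutually quasi-inverse. One direction, $\Gamma(X, M\ten_{\sO(X)}\sO_X) \cong M$, is precisely the isomorphism recorded in the proof of Lemma \ref{flatlemma}. For the other, $\Gamma(X,\sF)\ten_{\sO(X)}\sO_X \cong \sF$, I would track a single presentation of $\sF$ through both functors: the maps of finite free sheaves are encoded by matrices with entries in $\sO(X)$, and these matrices are left unchanged by $\Gamma(X,-)$ and then by $-\ten_{\sO(X)}\sO_X$. Hence the two right-exact sequences computing $\sF$ and $\Gamma(X,\sF)\ten_{\sO(X)}\sO_X$ share the same presentation map, so their cokernels are canonically identified.

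I expect no serious obstacle here, only bookkeeping: confirming that the presentation maps really correspond to $\sO(X)$-valued matrices and survive both functors intact, and that the global finite presentation property is preserved under $M \mapsto M\ten_{\sO(X)}\sO_X$. All the genuine analytic input --- coherence, exactness of $\Gamma(X,-)$, and the identity $\Gamma(X, M\ten_{\sO(X)}\sO_X)\cong M$ --- is already available from Lemma \ref{flatlemma}, so this lemma amounts to a repackaging of that argument.
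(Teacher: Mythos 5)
Your proposal is correct, and its skeleton is the same as the paper's: the quasi-inverse is $M \mapsto M\ten_{\sO(X)}\sO_X$, with exactness of $\Gamma(X,-)$ on coherent sheaves (Cartan's Theorem B) doing the real work. The one genuine difference is how the direction $\Gamma(X,\sF)\ten_{\sO(X)}\sO_X \cong \sF$ is handled: the paper gets it by citing Forster's Theorem 2.1, which says $\Gamma(X,-)$ is full and faithful on \emph{all} coherent sheaves, and then only needs to check essential surjectivity; you instead prove this isomorphism by hand, tracking a global presentation $\sO_X^{\oplus n}\to\sO_X^{\oplus m}\to\sF\to 0$ through $\Gamma(X,-)$ and back through $-\ten_{\sO(X)}\sO_X$ and comparing cokernels of the same matrix. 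Your route is more self-contained --- it reuses only the ingredients already assembled in the proof of Lemma \ref{flatlemma} (Serre's lemma, Cartan B, right exactness of tensoring up, and $\Gamma(X,M\ten_{\sO(X)}\sO_X)\cong M$ for finitely presented $M$) and needs no further citation --- while the paper's citation of Forster buys the stronger statement of full faithfulness on the whole coherent category in one line. Two small points of bookkeeping you should make explicit: the objectwise identifications you construct are the unit and counit of the adjunction $-\ten_{\sO(X)}\sO_X \dashv \Gamma(X,-)$ (the presentation diagrams commute with these canonical maps), which is what gives naturality and hence an actual equivalence of categories; and the exactness of $\sO(X)^{\oplus n}\to\sO(X)^{\oplus m}\to\Gamma(X,\sF)\to 0$ requires splitting the four-term sequence into two short exact sequences of coherent sheaves (the kernel $\ker(\sO_X^{\oplus m}\onto\sF)$ being coherent by Serre's lemma) before applying Cartan B.
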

\begin{proof}
 By \cite[Theorem 2.1]{forster}, the global sections functor $\Gamma(X,-)$ from coherent sheaves on $(X,\sO_X)$ to $\Gamma(X,\O_X)$-modules is full and faithful; it is also exact, by Cartan's Theorem B. The functor automatically sends  globally finitely presented coherent sheaves to finitely presented $\sO(X)$-modules. An inverse functor is given by sending a finitely presented $\sO(X)$-module $M$ to globally finitely presented the sheaf $M\ten_{\sO(X)}\sO_X$; that this is an inverse follows by exactness, since $\sO(X)= \Gamma(X, \sO_X)$.
\end{proof}

\begin{definition}
 Say that a morphism $ f \co A \to B$ of EFC $\Cx$-algebras is a finite localisation if it is the pullback of some morphism $ f \co \bar{A} \to \bar{B}$ of finitely presented EFC $\Cx$-algebras corresponding via Proposition \ref{fpalgprop} to an open immersion of Stein spaces.
\end{definition}

\subsection{Non-Archimedean EFC algebras}

The description after Definition \ref{EFCdef} can be adapted to $K$-algebras for any complete normed field $K$. 
For this definition, an EFC $\R$-algebra $A$ would correspond to an EFC-algebra $A\ten_{\R}\Cx$, equipped with a semilinear complex conjugation operation $a \mapsto \bar{a}$ satisfying $\overline{\Phi_f(a_1, \ldots, a_n)}= \Phi_{\bar{f}}(\bar{a}_1, \ldots, \bar{a}_n) $. We now consider a non-Archimedean analogue, fixing  a complete normed non-Archimedean field $K$.

\begin{definition}\label{EFCnonArchdef}
Define a  $K$-algebra $A$ with entire functional calculus (or EFC $K$-algebra for short)  to be a product-preserving set-valued  functor $\bA_K^n \mapsto A^n$  on the full subcategory of rigid analytic varieties with objects the affine spaces $\{\bA_K^n\}_{n \ge 0}$. 
\end{definition}
Thus  an EFC $K$-algebra $A$  is a $K$-algebra equipped with a systematic and consistent way of evaluating expressions of the form $\sum_{m_1, \ldots, m_n=0}^{\infty} \lambda_{m_1, \ldots, m_n} a_1^{m_1}\cdots a_n^{m_n}$ in $A$ whenever the coefficients $\lambda_{m_1, \ldots, m_n} \in K$ satisfy $\lim_{\sum m_i \to \infty } |\lambda_{m_1, \ldots, m_n}|^{1/\sum m_i}= 0$. 

\begin{examples}\label{EFCexamplesnonArch}
 For every rigid analytic space $X$ over $K$, the set $\sO(X):=\Gamma(X,\sO_X)$ of global analytic functions on $X$ naturally has the structure of an EFC $K$-algebra, as does any quotient of $\sO(X)$ by an ideal (see Examples \ref{Fermatexamples} below). 
Any filtered colimit of EFC $K$-algebras is also naturally an EFC $K$-algebra. 
\end{examples}

\begin{definition}
We say that an EFC $K$-algebra is finitely presented if it arises as a quotient of $\sO(\bA_K^n)$ by a finitely generated ideal, for some finite $n$.
\end{definition}
As in the Archimedean setting, the  category of EFC $K$-algebras is equivalent to the category of ind-objects of the category of finitely presented EFC $K$-algebras.

\begin{definition}
 We say that a Stein space over $K$ is finitely embeddable if if it admits a closed embedding  in $\bA_K^n$ for some finite $n$.
We say that a Stein space over $K$ is 
globally finitely presented if it 
admits a closed embedding  in $\bA_K^n$ for some finite $n$
  in such a way that the defining ideal is  generated as a sheaf of  $\sO_{\bA^n}$-modules by a finite set of global sections.
\end{definition}


\begin{remark}\label{embeddingrmk2}
By \cite[Theorem 4.23]{luetkebohmert}, every  finite-dimensional Stein space over $K$ with a global bound on the local embedding dimensions   is finitely embeddable. Under the equivalence \cite[Theorem  2.27]{GrosseKloenne} between partially proper rigid spaces and partially proper dagger spaces, the corresponding statement is true for $K$-dagger Stein spaces.

The observations of Remark \ref{embeddingrmk} now all adapt, replacing \cite[Theorem 4.3]{forster} with 
the following argument. Given a coherent sheaf $\sF$ on a finitely embeddable Stein space $Y$, we may apply  \cite[Theorem 4.23]{luetkebohmert} to the Stein space $(Y, \sO_Y\oplus \sF)$ to see that $\sF$  is globally finitely generated if the dimensions $\dim_K \sF_y/\m_y\sF_y$ are globally bounded for $y \in Y$.
 In particular, this implies that any non-singular Stein space is globally finitely generated, and that every Stein space admits an open cover by globally finitely presented Stein spaces.
\end{remark}

\begin{lemma}\label{HomlemmaNonarch}
 Given a rigid analytic space $Y$ and a finitely embeddable Stein space $X$ over $K$, 
  the set $\Hom(Y,X)$ of  morphisms from $Y$ to $X$ is isomorphic to the set $\Hom_{EFC}(\sO(X),\sO(Y))$ of EFC $K$-algebra homomorphisms from $\sO(X)$ to $\sO(Y)$.

Given a $K$-dagger space $Y$, with $X$  a finitely embeddable $K$-dagger Stein space or a $K$-dagger affinoid, 
  the set $\Hom(Y,X)$ of  morphisms from $Y$ to $X$ is isomorphic to the set $\Hom_{EFC}(\sO(X),\sO(Y))$ of EFC $K$-algebra homomorphisms from $\sO(X)$ to $\sO(Y)$.
\end{lemma}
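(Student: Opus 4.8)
The plan is to imitate the proof of Lemma~\ref{Homlemma} in each of the two cases, choosing an ambient ``coordinate'' space $Z$ with a closed embedding $X \into Z$ and reducing to two ingredients: an identification $\Hom(Y,Z) \cong \Hom_{EFC}(\sO(Z),\sO(Y))$, together with a description of which such maps factor through $X$.

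For the first statement, and for the case where $X$ is a finitely embeddable $K$-dagger Stein space, the finite-embeddability hypothesis (cf.\ Remark~\ref{embeddingrmk2}) supplies a closed embedding $X \into \bA^n_K$, so we take $Z = \bA^n_K$. By Definition~\ref{EFCnonArchdef}, $\sO(\bA^n_K)$ is the free EFC $K$-algebra on $n$ generators, whence $\Hom_{EFC}(\sO(\bA^n_K),\sO(Y)) = \sO(Y)^n = \Hom(Y,\bA^n_K)$, with both sides the set of $n$-tuples of global functions. Writing $\sI$ for the coherent ideal sheaf cutting out $X$ and $I := \Gamma(\bA^n_K,\sI)$, a map $Y \to \bA^n_K$ factors through $X$ exactly when the corresponding tuple annihilates $\sI$; by the non-Archimedean Cartan Theorems~A and~B for Stein spaces over $K$ (\cite{kiehlThmAB,luetkebohmert}, and their dagger counterparts in \cite{GrosseKloenne}) we have $\sI \cong I\ten_{\sO(\bA^n_K)}\sO_{\bA^n_K}$ and $\sO(X) = \sO(\bA^n_K)/I$, so these are precisely the EFC homomorphisms killing $I$, i.e.\ those factoring through $\sO(X)$. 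The argument is then verbatim that of Lemma~\ref{Homlemma}, carried out in the rigid and dagger categories respectively.

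The remaining case, a $K$-dagger affinoid $X$, requires a genuinely different treatment, since here no closed embedding into $\bA^n_K$ is available: writing $\sO(X)$ as a quotient of the overconvergent (Washnitzer) algebra $\sO(\bD^n_{\dagger})$, the overconvergent functions on the polydisc are not entire functions of the coordinates, so $\sO(X)$ fails to be finitely generated as an EFC $K$-algebra by its coordinate functions and the template above breaks down. Instead I would use the representability of dagger affinoids (\cite{GrosseKloenne}): morphisms $Y \to X$ correspond to $K$-algebra homomorphisms $\sO(X) \to \sO(Y)$, and the task becomes to match these with EFC homomorphisms. One inclusion is formal, as every morphism is continuous and the EFC operations $\Phi_f$ are limits of polynomials, hence preserved; for the converse one must show that an EFC homomorphism out of a dagger affinoid algebra is automatically bounded, so coincides with substitution of the images of the coordinates and defines a morphism into $X$.

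The main obstacle is exactly this last coincidence of EFC homomorphisms with morphisms in the dagger affinoid case, that is, the requisite automatic continuity. Overconvergence is what makes it work: the convergence of each function of $\sO(X)$ on a polydisc of radius strictly greater than $1$ provides the quantitative slack needed to force an EFC homomorphism to be bounded, and dually it is precisely this slack that upgrades a bare tuple of coordinate images into a full overconvergent functional calculus. For an ordinary rigid affinoid the underlying Tate algebra carries no such room, and the comparison can fail; this is why the rigid part of the statement is confined to Stein spaces and why one is driven to the overconvergent setting of \cite{GrosseKloenne}.
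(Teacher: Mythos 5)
Your treatment of the two Stein cases (rigid and dagger) is exactly the paper's: the proof of Lemma \ref{Homlemma} is repeated with Kiehl's theorems \cite{kiehlThmAB} replacing Cartan's, and that part is fine. You have also correctly identified why the dagger affinoid case cannot be handled the same way: $\sO(X)$ is a quotient of the Washnitzer algebra $W_n$, whose elements are not entire functions of the coordinates, so there is no closed embedding of $X$ into $\bA^n_K$ to reduce to.

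However, for that remaining case your proposal has a genuine gap. You reduce the statement to an automatic-continuity claim --- that every EFC $K$-algebra homomorphism $\sO(X) \to \sO(Y)$ out of a dagger affinoid algebra is bounded, hence given by substitution of the images of the coordinates --- but you never prove this claim; the paragraph about overconvergence providing ``quantitative slack'' is a heuristic for why it ought to be true, not an argument. It is also not clear that boundedness alone would suffice: you would still need a bounded $K$-algebra map to respect the dagger (overconvergent) structure so as to define a morphism of dagger spaces, and nothing in the paper's framework addresses norms or continuity of EFC maps at all. The paper avoids this entirely by exploiting the pro-Stein structure of dagger affinoids: writing $\sO(X)=W_n/I$, Noetherianness of $W_n=\bigcup_{\rho>1}T_n(\rho)$ lets one choose generators $f_1,\ldots,f_m$ of $I$ lying in some $T_n(r)$, and for $1<\rho\le r$ the vanishing locus $X_{\rho}$ of these functions on the open polydisc of radius $\rho$ is a Stein space, with $X=\Lim_{\rho}X_{\rho}$ as ringed spaces and $\sO(X)=\LLim_{\rho}\sO(X_{\rho})$ as EFC $K$-algebras (the EFC structure on a dagger algebra \emph{is} this filtered-colimit structure, since EFC algebras are closed under filtered colimits). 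Then
$\Hom_{EFC}(\sO(X),\sO(Y)) \cong \Lim_{\rho} \Hom_{EFC}(\sO(X_{\rho}),\sO(Y)) \cong \Lim_{\rho} \Hom(Y,X_{\rho}) \cong \Hom(Y,X)$
by the already-established Stein case, with no continuity input needed. If you wish to salvage your route, you would have to supply the automatic-continuity theorem as an independent result; as written, the key step of your argument is asserted rather than proved.
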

\begin{proof}
 The proof of Lemma \ref{Homlemma} now adapts to give the  statements for Stein spaces, substituting \cite[Theorem 2.4]{kiehlThmAB} for Cartan's Theorems. 

By definition, a $K$-dagger affinoid $X$ is the ringed space associated to some quotient $W_n/I$ of the Washnitzer algebra of overconvergent functions on a polydisc. Since the Washnitzer algebra is Noetherian and is given by a nested union $\bigcup_{\rho>1}T_n(\rho)$ of Tate algebras, we can choose generators $f_1, \ldots,f_m$ for the ideal $I$ lying in some $T_n(r)$. Then for any $\rho\le r$, we have a Stein space $X_{\rho}$ given by the vanishing locus of $f_1, \ldots,f_m$  on the open polydisc of radius $\rho$. Thus the ringed space $X$ arises as the inverse limit $X=\Lim_{r \ge\rho>1} X_{\rho}$ of Stein spaces. Since the forgetful functor from EFC-algebras to sets preserves filtered colimits, we also have $\sO(X)=\LLim_i\sO(X_i)$ in the category of $\oT$-algebras, completing the proof.   
\end{proof}

In particular,  this gives a full and faithful contravariant functor from finitely embeddable Stein spaces over $K$ to  EFC $K$-algebras.

\begin{lemma}\label{HomlemmacoprodNonarch}
Given a rigid analytic space $Y$ and, for $i \in \N$, finitely embeddable Stein spaces $X_i$ over $K$,
  the natural map
\[
 \Hom(Y, \coprod X_i) \to \Hom_{EFC}(\prod_i \sO(X_i), \sO(Y)),
\]
coming from the expression $\sO(\coprod X_i) =\prod_i \sO(X_i)$, is an isomorphism.
\end{lemma}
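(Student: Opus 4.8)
The plan is to follow the proof of Lemma \ref{Homlemmacoprod} as closely as possible; the only genuine obstruction is that the Archimedean argument uses the entire function $\exp(2\pi i z)-1$, whose zero locus is the discrete Stein space $\Z$, whereas the non-Archimedean exponential has finite radius of convergence and so is unavailable. So first I would manufacture a substitute. The valuation on $K$ being nontrivial, choose $\pi\in K$ with $0<|\pi|<1$ and set $c_i:=\pi^{-i}$, so the $c_i\in K$ are distinct with $|c_i|\to\infty$. The Weierstrass product $g(z):=\prod_{i\ge 0}(1-\pi^i z)$ then converges to an entire function $g\in\sO(\bA_K^1)$ whose zero locus is exactly $\{c_i : i\ge 0\}$: entireness follows from the estimate $|a_n|^{1/n}\le|\pi|^{(n-1)/2}\to 0$ on its coefficients $a_n$, and the zeros are read off factor by factor.

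Next, exactly as in Lemma \ref{Homlemmacoprod}, set $a=(c_i)_i\in\prod_i\sO(X_i)$, so that $\Phi_g(a)=0$. For any EFC $K$-algebra homomorphism $\theta\co\prod_i\sO(X_i)\to\sO(Y)$ the element $b:=\theta(a)$ satisfies $\Phi_g(b)=0$, so $b(y)\in\{c_i\}$ for every point $y$ of $Y$. Because $|c_i|\to\infty$, only finitely many $c_i$ lie in any bounded region, so we may choose radii $r_i$ making the disks $D(c_i,r_i)$ pairwise disjoint; their union $W=\coprod_i D(c_i,r_i)$ is then an admissible open of $\bA_K^1$ containing all the $c_i$, on which $\{D(c_i,r_i)\}$ is an admissible covering, and $b$ factors through $W$. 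Pulling back this covering along $b$ yields an admissible decomposition $Y=\coprod_i Y_i$ with $Y_i=b^{-1}(c_i)$ and $b\equiv c_i$ on $Y_i$, whence $\sO(Y)=\prod_i\sO(Y_i)$.

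It remains to see that $\theta$ is the product of homomorphisms $\theta_i\co\sO(X_i)\to\sO(Y_i)$. For this I would produce, by the same recipe, entire functions $h_i\in\sO(\bA_K^1)$, namely $g(z)/(1-z/c_i)$ suitably normalised (the division being exact since $c_i$ is a simple zero), with $h_i(c_l)=\delta_{il}$. Then the idempotent $e_i\in\prod_j\sO(X_j)$ equals $\Phi_{h_i}(a)$, so $\theta(e_i)=\Phi_{h_i}(b)$ is precisely the idempotent cutting out $Y_i$. Hence $\theta$ respects the two product decompositions and restricts to EFC homomorphisms $\theta_i$. By Lemma \ref{HomlemmaNonarch} each $\theta_i$ comes from a unique morphism $Y_i\to X_i$, and these assemble to a morphism $Y=\coprod_i Y_i\to\coprod_i X_i$ inducing $\theta$; since both the decomposition and the $\theta_i$ are determined by $\theta$, this morphism is unique, giving both surjectivity and injectivity of the natural map.

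The main obstacle is the first step: lacking a global exponential, one must build an entire function with a prescribed infinite discrete zero set and arrange its zeros to escape to infinity in absolute value. This escape is exactly what makes the induced decomposition of $Y$ an \emph{admissible} covering in the rigid topology, the point where the non-Archimedean argument genuinely differs from the Archimedean one, in which local constancy of $b$ is automatic from continuity.
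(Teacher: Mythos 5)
Your proposal is correct and takes essentially the same approach as the paper: the paper's proof also uses the sequence $a=(1,\pi^{-1},\pi^{-2},\ldots)$ and the entire function $f(z)=\prod_{r\ge 0}(1-\pi^r z)$, justified by exactly the same coefficient bound $|\pi|^{n(n-1)/2}$, and then defers to the argument of Lemma \ref{Homlemmacoprod}. Your extra details (admissibility of the induced cover of $Y$, and the normalised functions $g(z)/(1-z/c_i)$ producing the idempotents) are elaborations of steps the paper leaves implicit in the phrase ``the proof of Lemma \ref{Homlemmacoprod} adapts''.
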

\begin{proof}
The proof of Lemma \ref{Homlemmacoprod} adapts, using the sequence
 $a =(1,\pi^{-1},\pi^{-2},\ldots) \in  \prod_i \sO(X_i)$ for an element $\pi \in K$ with $|\pi|<1$. This satisfies $\Phi_{f}(a)=0$ for $f(z)= \prod_{r\ge 0} (1-\pi^rz)$, which converges on the whole of the affine line because the $n$th coefficient has norm $|\pi|^{n(n-1)/2}$, so decays sub-exponentially.  
\end{proof}

The following  follow with the same proofs as Proposition \ref{fpalgprop} and Lemma \ref{fpmodlemma}, using Kiehl's theorems  \cite{kiehlThmAB}  in place of  Forster's and Cartan's theorems.

\begin{proposition}\label{fpalgprop2}
 The functor $X \mapsto \sO(X)=\Gamma(X,\sO_X)$ gives  a contravariant equivalence of categories between globally finitely presented Stein spaces over $K$ and finitely presented EFC $K$-algebras. 
\end{proposition}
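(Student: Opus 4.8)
The plan is to transport the proof of Proposition~\ref{fpalgprop} essentially verbatim, replacing $\Cx^n$ by $\bA_K^n$ throughout and invoking Kiehl's analogues \cite{kiehlThmAB} of Cartan's Theorems~A and~B at each point where the complex argument appeals to the latter. The geometric input is that $\bA_K^n$ is a Stein space over $K$ (exhausted by affinoid polydiscs of increasing radius), so that Kiehl's Theorem~B supplies the vanishing of higher coherent cohomology and Kiehl's Theorem~A the generation of coherent sheaves by their global sections; these are exactly the substitutes already used in the proof of Lemma~\ref{HomlemmaNonarch}.

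First I would check that the functor lands in the stated target. Given a globally finitely presented Stein space $X$ over $K$, choose a closed embedding $i\co X \into \bA_K^n$ whose defining ideal sheaf $\sI$ is globally finitely generated (hence coherent). From the short exact sequence $0 \to \sI \to \sO_{\bA_K^n} \to i_*\sO_X \to 0$, Kiehl's Theorem~B gives $\H^1(\bA_K^n,\sI)=0$, so the long exact sequence of cohomology yields $\sO(X) \cong \sO(\bA_K^n)/I$ with $I := \Gamma(\bA_K^n,\sI)$. Since $\sI$ is globally finitely generated, $I$ is a finitely generated ideal, and thus $\sO(X)$ is a finitely presented EFC $K$-algebra.

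Fullness and faithfulness are then immediate from Lemma~\ref{HomlemmaNonarch}, which identifies $\Hom(Y,X)$ with $\Hom_{EFC}(\sO(X),\sO(Y))$ for finitely embeddable Stein $X$. For essential surjectivity I would start from a finitely presented EFC $K$-algebra $A = \sO(\bA_K^n)/I$ with $I$ finitely generated, pick a surjection $\sO(\bA_K^n)^{\oplus m} \onto I$, and set $\sI := I\sO_{\bA_K^n}$; this ideal sheaf is coherent and globally finitely generated, so it cuts out a globally finitely presented Stein space $i\co X \into \bA_K^n$. Running the non-Archimedean analogue of the argument in the proof of Lemma~\ref{flatlemma} (equivalently, the module-level equivalence obtained as in Lemma~\ref{fpmodlemma}) identifies $A$ with $\Gamma(\bA_K^n, A\ten_{\sO(\bA_K^n)}\sO_{\bA_K^n})$, and since $A\ten_{\sO(\bA_K^n)}\sO_{\bA_K^n} = \sO_{\bA_K^n}/\sI = i_*\sO_X$ I obtain $A \cong \Gamma(X,\sO_X) = \sO(X)$, as desired.

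The step I expect to demand the most care is not the formal skeleton---which copies Proposition~\ref{fpalgprop} line for line---but verifying that the required cohomological inputs genuinely hold in this non-Archimedean framework: namely that $\bA_K^n$ and the spaces $X$ arising above are Stein in Kiehl's sense, that $\Gamma(\bA_K^n,-)$ is exact on coherent sheaves with $\H^{>0}$ vanishing, and that $-\ten_{\sO(X)}\sO_X$ is quasi-inverse to $\Gamma(X,-)$ on globally finitely presented coherent sheaves. All of these follow once Kiehl's Theorems~A and~B \cite{kiehlThmAB} are in place, together with the coherence and embedding statements recorded in Remark~\ref{embeddingrmk2}; under the dictionary of \cite[Theorem 2.27]{GrosseKloenne} the same verification then also covers the $K$-dagger case.
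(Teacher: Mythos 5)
Your proposal is correct and matches the paper's own proof, which is exactly the one-line instruction to rerun the argument of Proposition~\ref{fpalgprop} (and Lemma~\ref{fpmodlemma}) with Kiehl's Theorems~A and~B \cite{kiehlThmAB} substituted for Cartan's and Forster's theorems. Your spelled-out version of the three steps (landing in the target, full faithfulness via Lemma~\ref{HomlemmaNonarch}, essential surjectivity via the flat-base-change argument from Lemma~\ref{flatlemma}) is precisely what that instruction unpacks to.
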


\begin{lemma}\label{fpmodlemma2}
Given a Stein space $X$, the global sections functor gives a contravariant equivalence of categories between globally finitely presented coherent sheaves on $X$ and finitely presented $\sO(X)$-modules. 
 \end{lemma}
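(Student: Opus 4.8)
The plan is to transcribe the proof of Lemma \ref{fpmodlemma} essentially line for line, replacing its two complex-analytic inputs --- Forster's full-faithfulness result \cite[Theorem 2.1]{forster} and Cartan's Theorem B --- by their rigid-analytic counterparts from Kiehl \cite{kiehlThmAB}. Concretely, I would first record the two structural facts about the global sections functor $\Gamma(X,-)$ from coherent $\sO_X$-modules to $\sO(X)$-modules on a Stein space $X$ over $K$: that it is full and faithful, and that it is exact. Exactness is immediate from the Theorem B part, namely the vanishing $\H^{>0}(X,\sF)=0$ for every coherent $\sF$ --- the statement \cite[Theorem 2.4]{kiehlThmAB} already invoked in the proof of Lemma \ref{HomlemmaNonarch}. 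Full-faithfulness is the non-Archimedean analogue of \cite[Theorem 2.1]{forster}, and follows from the combination of Kiehl's Theorems A and B exactly as Forster's argument does over $\Cx$.

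Granting these, the remaining steps are purely formal. First I would check that $\Gamma(X,-)$ carries globally finitely presented coherent sheaves to finitely presented modules: a presentation $\sO_X^{\oplus n} \to \sO_X^{\oplus m} \onto \sF$ by finitely many global generators and relations becomes, after applying the exact functor $\Gamma(X,-)$ and using $\Gamma(X,\sO_X^{\oplus m}) = \sO(X)^{\oplus m}$, a finite presentation of the $\sO(X)$-module $\Gamma(X,\sF)$. Second I would exhibit the candidate inverse $M \mapsto M\ten_{\sO(X)}\sO_X$, which sends a finite presentation $\sO(X)^{\oplus n} \to \sO(X)^{\oplus m} \onto M$ to a presentation of $M\ten_{\sO(X)}\sO_X$ by finite free sheaves, and so lands among globally finitely presented coherent sheaves.

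That these two functors are mutually quasi-inverse then follows formally, exactly as in the last sentence of the proof of Lemma \ref{fpmodlemma}. The right-exactness computation of Lemma \ref{flatlemma}, together with $\sO(X)=\Gamma(X,\sO_X)$, yields a natural isomorphism $\Gamma(X, M\ten_{\sO(X)}\sO_X)\cong M$ for finitely presented $M$; and since $\Gamma(X,-)$ is fully faithful, having a section up to this isomorphism forces the reverse composite $\Gamma(X,\sF)\ten_{\sO(X)}\sO_X \to \sF$ to be an isomorphism too, whence the equivalence.

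The only genuinely non-formal ingredient is the pair of Kiehl theorems underlying full-faithfulness and exactness of $\Gamma(X,-)$; everything downstream is bookkeeping. The point to watch is therefore that the class of Stein spaces over $K$ under consideration is one to which Kiehl's Theorems A and B apply (his quasi-Stein spaces); and, should one want the dagger variant, that the limit presentation $\sO(X)=\LLim_i\sO(X_i)$ used in the proof of Lemma \ref{HomlemmaNonarch} transfers the conclusion from rigid Stein spaces to dagger Stein spaces without further work.
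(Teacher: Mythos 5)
Your proposal is correct and is essentially the paper's own proof: the paper disposes of Lemma \ref{fpmodlemma2} by declaring that it follows ``with the same proofs as'' Lemma \ref{fpmodlemma}, substituting Kiehl's Theorems A and B \cite{kiehlThmAB} for Forster's full-faithfulness theorem and Cartan's Theorem B, which is exactly the substitution you carry out (with the bookkeeping steps written out in more detail than the paper bothers to).
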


\begin{definition}\label{finlocKdef}
 Say that a morphism $ f \co A \to B$ of EFC $K$-algebras is a finite localisation if it is the pullback of some morphism $ f \co \bar{A} \to \bar{B}$ of finitely presented EFC $K$-algebras corresponding via Proposition \ref{fpalgprop} to an open immersion of Stein spaces.
\end{definition}

\section{Generalisations of derived rings} 
We now recall some results from \cite{CarchediRoytenberg,CarchediRoytenbergHomological} on differential graded algebras with respect to Fermat theories.

\subsection{Fermat theories}

\begin{definition}
 As for instance in \cite{AdamekRosickyVitale}, a Lawvere theory is a small category $\oT$ closed under finite products, and generated under products by an object $\bA^1_{\oT} \in \oT$. Write $\bA^n_{\oT} \in \oT$ for the $n$-fold product of $\bA^n$.
\end{definition}

\begin{definition}
 Given a Lawvere theory $\oT$, an $\oT$-algebra is a product preserving functor $F$ from $\oT$ to sets.
\end{definition}

Thus an $\oT$-algebra consists of a set $A:=F(\bA^1_{\oT})$ equipped with operations $\Phi_f \co A^n \to A$ for each $f \co \bA^n_{\oT} \to \bA^1_{\oT}$ satisfying various compatibility conditions. Sending $A$ to its underlying set gives a forgetful functor from $\oT$-algebras to sets, and this has a left adjoint which we denote by $S \mapsto \oT[S]$; for finite sets, the underlying set is given by 
\[
 \oT[x_1, \ldots,x_n]\cong \Hom_{\oT}(\bA^n_{\oT},\bA^1_{\oT}).
\]
This gives another way of describing $\oT$-algebras, as algebras for the monad sending a set $S$ to the set underlying $\oT[S]$. This monad commutes with filtered colimits, so in particular, we have $\oT[S]= \LLim_{\substack{T \subset S\\ \text{finite}}} \oT[T]$.

\begin{definition}
Given a commutative ring $R$, the Lawvere theory $\Com_R$ is the full subcategory of affine schemes on the affine spaces $\bA^n_R$ over $R$. We simply write $\Com:=\Com_{\Z}$. 
\end{definition}

A $\Com_R$-algebra is just a commutative $R$-algebra, since $\Hom_{\Com_R}(\bA^n_{\Com_R}, \bA^1_{\Com_R})\cong R[x_1, \ldots,x_n]$, and then we set $\Phi_f(a_1, \ldots,a_n):=f(a_1, \ldots,a_n)$.

\begin{definition}
 As in \cite{DubucKock}, a Fermat theory is a Lawvere theory $\oT$ equipped with a morphism $\Com \to \oT$ such that for all
$f \in \oT[x, z_1 , \ldots , z_n]$ there exists a unique $g \in \oT[x, y, z_1 , \ldots, z_n ]$ such that
\[
f (x, z_1 , \ldots , z_n) - f (y, z_1 , \ldots , z_n) = (x - y)g(x, y,z_1 , \ldots , z_n ).
\]
A rational Fermat theory is a Fermat theory equipped with an extension $\Com_{\Q} \to \oT$ of the morphism above.
\end{definition}

Thus an algebra over a Fermat theory is a commutative ring with well-behaved extra structure, and over a rational Fermat theory, the underlying ring is a $\Q$-algebra.

\begin{examples}\label{Fermatexamples}
The main example we will consider of a rational Fermat theory is the theory $\bH$ of \cite[Example 2.16]{CarchediRoytenberg}, which we will refer to as EFC. This consists of the complex-analytic manifolds $\Cx^n$ with holomorphic functions between them. Algebras for this theory are EFC $\Cx$-algebras. In this case, we have $\oT[x_1, \ldots,x_n]=\sO(\Cx^n)$, the EFC $\Cx$-algebra of entire holomorphic functions on $\Cx^n$.

We  also consider the case when $\oT$ consists of the rigid analytic varieties $\bA_K^n$ over a complete non-Archimedean field $K$, algebras  for this theory being EFC $K$-algebras. In this case, for $\pi \in K$ with $|\pi|<1$, we have $\oT[x_1, \ldots,x_n]= \Lim_r K\< \pi^r x_1, \ldots,\pi^r x_n\> $, the EFC $K$-algebra of analytic functions on $\bA_K^n$; these can be characterised as power series with coefficients satisfying $\lim_{\sum m_i \to \infty } |\lambda_{m_1, \ldots, m_n}|^{1/\sum m_i}= 0$.

Another example is given by the real manifolds $\R^n$ and smooth functions; these give rise to the theory of $\C^{\infty}$-rings, with $\oT[x_1, \ldots,x_n]=\C^{\infty}(R^n)$, the $\C^{\infty}$-ring of infinitely differentiable functions on $\R^n$.

Note that as in \cite{DubucKock} (\cite[Corollary 2.8]{CarchediRoytenberg}), any quotient of an $\oT$-algebra by an ideal is again an $\oT$-algebra. Any filtered colimit of $\oT$-algebras is also naturally an $\oT$-algebra, essentially because the functor $\oT[-]$ preserves filtered colimits.
\end{examples}

\begin{definition}
Given a rational Fermat theory $\oT$, denote the left adjoint to the forgetful functor from  $\oT$-algebras to commutative $\Q$-algebras by $A \mapsto A_{\oT}$. Explicitly, if $A= \Q[S]/I$, then $A_{\oT} \cong \oT[S]/I \oT[S]$.
\end{definition}

\begin{definition}
We say that an $\oT$-algebra  is finitely generated if it arises as a quotient of $\oT[x_1, \ldots, x_n]$  for some finite $n$.
We say that an $\oT$-algebra  is finitely presented if it arises as a quotient of $\oT[x_1, \ldots, x_n]$ by a finitely generated ideal, for some finite $n$.
\end{definition}

Note that finitely presented $\oT$-algebras $A$ are the compact objects in the sense that $\Hom_{\Alg(\oT)}(A,-)$ preserves filtered colimits. The category of $\oT$-algebras is then immediately equivalent to the ind-category of finitely presented $\oT$-algebras (write an arbitrary $\oT$-algebra as $\oT[S]/I$, then note that it can be expressed as the filtered colimit of $\oT$-algebras $\oT[T]/J$ with $T \subset S$ finite and $J\subset I \cap \oT[T]$ finitely generated).   

\begin{definition}
 Say that a morphism of $\oT$-algebras is flat if the underlying morphism of commutative rings is so.
\end{definition}

For a Fermat theory $\oT$, the category of $\oT$-algebra contains all small limits and colimits. As for EFC $\Cx$-algebras, we denote the pushout of $A \la B \to C$ by 
$A\odot_BC$. Because all quotient rings of an  $\oT$-algebra are  $\oT$-algebras, it follows that if we choose a sets $S$ and $T$ of generators for $A$ and $C$, then
\[
 A\odot_BC\cong \oT[S \sqcup T]\ten_{(\oT[S]\ten\oT[T])}(A\ten_BC).
\]

\subsubsection{Modules}

Following \cite{Q}, there is a natural notion of modules over any object $A$ in a category with finite limits, given by taking abelian group objects in the category of $A$-augmented objects. For  any Fermat theory $\oT$, these correspond to modules $M$ over the commutative ring underlying $A$, by forming the $\oT$-ring $A \oplus M$ as follows.

\begin{definition}\label{oplusdef}
 Given an $\oT$-ring $A$ and an $A$-module $M$, we define an $\oT$-ring structure on $A \oplus M$ by setting $\Phi_f^{A\oplus M}(a_1+ m_, \ldots a_n+m_n):= \Phi_f^A(a_1, \ldots, a_n) + \sum_{i=1}^n \Phi_{\frac{\pd f}{\pd x_i}}(a_1, \ldots, a_n)m_i$ for all $f \in \oT[x_1, \ldots,x_n]$. We then say that a map $\delta \co A \to M$ is an $\oT$-derivation if induces an $\oT$-ring homomorphism  $A \to A \oplus M$  sending $a$ to $(a, \delta a)$.
\end{definition}
Explicitly, adapting \cite[Definition 5.10]{joyceAGCinfty},  the condition for  $\delta$ to be a derivation amounts to saying that for all $f \in \oT[x_1, \ldots,x_n]$, we have
\[
 \delta \Phi_f(a_1, \ldots, a_n) = \sum_{i=1}^n \Phi_{\frac{\pd f}{\pd x_i}}(a_1, \ldots, a_n) \delta a_i.
\]

\subsection{EFC-Differential graded algebras}

From now on, we fix a rational Fermat theory $\oT$.

The following correspond to the differential graded $\oT$-algebras of \cite[Definition 4.14]{CarchediRoytenbergHomological} following  \cite[Example 2.16]{CarchediRoytenberg}. 

\begin{definition}\label{EFCDGAdef}
Define an $\oT$-differential graded algebra ($\oT$-DGA for short) to be a chain complex $A_{\bt}=(A_*, \delta)$ of $\Q$-vector spaces equipped with:
\begin{itemize}
 \item an associative graded multiplication, graded-commutative in the sense that $ab= (-1)^{\bar{a}\bar{b}}ba$ for all $a,b \in A$, where $\bar{a}$ is the parity of $a$ (i.e. the degree modulo $2$), and 
\item an enhancement of the $\Q$-algebra structure on $\z_0A=\ker(\delta \co A_0 \to A_{-1})$ to an $\oT$-algebra structure,
\end{itemize}
such that $\delta$ is a graded derivation in the sense that $\delta(ab)= \delta(a)b + (-1)^{\bar{a}}a\delta(b)$ for all $a,b \in A$.
\end{definition}

\begin{examples}
Every $\oT$-algebra can be regarded as an $\oT$-differential graded algebra concentrated in degree $0$, so Definition \ref{EFCDGAdef} includes all the examples of Examples \ref{EFCexamples} when $\oT$ is EFC.

It also includes functions on shifted cotangent bundles $T^*M[n]$ of complex  manifolds $M$, with $\sO(T^*M[n])$ given by the free graded-commutative algebra over $\sO(M)$ generated by analytic sections $T_M$ of the tangent bundle placed in chain degree $n$, and with trivial differential $\delta$.

A more interesting example is given by the derived critical locus $\DCrit(M,f)$ of a function $f \in \sO(M)$. The EFC-differential graded algebra  $\sO( \DCrit(M,f) )$ is given by the chain complex
\[
 \sO(M) \xla{ \neg df} T_M \xla{ \neg df} \L^2_{\sO(M)}T_M \xla{ \neg df} \ldots,
\]
so $\H_0\sO( \DCrit(M,f) )$ consists of functions on the critical locus of $f$, and we have $ \DCrit(M,0)= T^*M[1]$.
\end{examples}

\begin{definition}
 We say that a morphism $A_{\bt} \to B_{\bt}$ of $\oT$-DGAs is a quasi-isomorphism if it induces an isomorphism $\H_*(A_{\bt}) \to \H_*(B_{\bt})$ on homology groups.
\end{definition}

\begin{proposition}\label{stdmodelprop}
There is a cofibrantly generated model structure (which we call the standard model structure) on the category of non-negatively graded $\oT$-DGAs  in which  a morphism $A_{\bt} \to B_{\bt}$ is
\begin{enumerate}
 \item  a weak equivalence if it is a quasi-isomorphism;
\item a fibration if it is surjective in strictly positive chain degrees.
\end{enumerate}

There is also a model structure  for simplicial EFC $\Cx$-algebras in which weak equivalences are $\pi_*$-isomorphisms and fibrations are Kan fibrations, and Dold--Kan normalisation combines with the Eilenberg--Zilber shuffle product \cite[Definitions 8.3.6 and 8.5.4]{W} to give a right Quillen equivalence $N$ from simplicial EFC $\Cx$-algebras to our standard model structure. 
\end{proposition}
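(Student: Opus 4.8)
The plan is to establish the two model structures separately and then compare them via Dold--Kan normalisation, reducing the first assertion to a transfer argument and the comparison to a monoidal version of the classical Dold--Kan equivalence.

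First I would construct the standard model structure on non-negatively graded $\oT$-DGAs by transferring from an underlying model structure via a free-forgetful adjunction. The forgetful functor sends an $\oT$-DGA to its underlying non-negatively graded chain complex of $\Q$-vector spaces (or more precisely to the underlying graded object together with enough structure to define the free functor); the category of non-negatively graded chain complexes carries the projective model structure in which weak equivalences are quasi-isomorphisms and fibrations are the maps surjective in strictly positive degrees, and this is cofibrantly generated. The free $\oT$-DGA functor is left adjoint to this forgetful functor, so I would invoke the standard transfer principle (as in Schwede--Shipley or Quillen): it suffices to verify that the forgetful functor preserves filtered colimits (clear, since $\oT[-]$ commutes with filtered colimits as recalled in the excerpt), that the relevant domains of the generating cofibrations are small, and critically that every relative cell complex built from the transferred generating trivial cofibrations is a weak equivalence. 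The last point is the genuine work: one needs a \emph{path object} argument or an explicit acyclicity check showing that pushouts of free maps on acyclic complexes remain acyclic, which in the characteristic zero / $\Q$-linear setting follows because the graded-symmetric algebra functor is exact on $\Q$-vector spaces and the $\oT$-structure only enhances degree $0$.

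Next I would set up the simplicial side. Simplicial EFC $\Cx$-algebras form a category of simplicial objects in a variety of algebras (the $\oT$-algebras for $\oT = \bH$), so they inherit a cofibrantly generated model structure in which weak equivalences are $\pi_*$-isomorphisms and fibrations are Kan fibrations on underlying simplicial sets; this is the standard Quillen model structure on simplicial algebras over a Lawvere theory, again obtained by transfer from simplicial sets. With both model structures in hand, the heart of the proposition is to show that Dold--Kan normalisation $N$ is a right Quillen equivalence. The plan is to exhibit $N$ as the right adjoint of a Quillen adjunction whose underlying adjunction is the classical Dold--Kan correspondence between simplicial $\Q$-vector spaces and non-negatively graded chain complexes, promoted to the level of algebras. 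The Eilenberg--Zilber shuffle product equips $N$ with a lax symmetric monoidal structure, so $N$ carries a simplicial commutative $\oT$-algebra to a graded-commutative $\oT$-DGA: the shuffle product supplies the graded multiplication, the degeneracies and the monad structure are unaffected in degree $0$ so $\z_0 N(A) = A_0$ retains its $\oT$-algebra structure, and the Moore differential is a graded derivation for the shuffle product by the classical compatibility.

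The comparison that $N$ is right Quillen with a Quillen-equivalent left adjoint I would prove by checking: (i) $N$ preserves fibrations and trivial fibrations, which is immediate since on underlying objects $N$ computes the normalised chains and a Kan fibration of simplicial $\Q$-modules normalises to a degreewise surjection in positive degrees, and $\pi_*$ agrees with $\H_*$ of the normalisation; (ii) the derived unit and counit are weak equivalences. For (ii) the key input is that the classical normalisation is already an equivalence of underlying homotopy theories (indeed $N$ and its inverse induce mutually inverse equivalences on homology/homotopy), so on cofibrant-fibrant objects the unit and counit are underlying quasi-isomorphisms, hence weak equivalences in both structures; this reduces the equivalence statement to the degree-zero $\oT$-algebra structure, which is transported isomorphically since $\z_0 N = (-)_0$ on the level of $\oT$-algebras. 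The main obstacle I anticipate is the monoidal bookkeeping in (i)--(ii): the shuffle product is not an isomorphism (only the Alexander--Whitney map is lax monoidal in the other direction, and only the shuffle map is \emph{symmetric} lax monoidal), so one must be careful that the lax monoidal $N$ genuinely produces the claimed $\oT$-DGA with an honest graded-commutative (not merely $E_\infty$) product, and that the adjoint on the other side is built compatibly; verifying that the shuffle product's associativity and graded-commutativity hold strictly, and that the $\oT$-operations on $\z_0$ are respected, is where the real care lies rather than in the abstract transfer machinery.
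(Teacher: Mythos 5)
Your skeleton (transfer both model structures, then a monoidal Dold--Kan comparison) is essentially the same as that of the results the paper invokes --- its own proof is purely by citation: \cite[Theorem 6.10]{CarchediRoytenbergHomological} together with the inclusion--truncation adjunction for the first structure, \cite[Remarks II.4.2]{QHA} for the simplicial one, and \cite[Corollary 2.2.10]{nuitenThesis} for the Quillen equivalence. However, your execution has a genuine gap at the acyclicity step of the transfer, which is exactly where the Fermat property must enter, and you never use it. You argue that pushouts of the generating trivial cofibrations are quasi-isomorphisms ``because the graded-symmetric algebra functor is exact on $\Q$-vector spaces and the $\oT$-structure only enhances degree $0$''. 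That reasoning disposes of the discs $D^n$ with $n\ge 2$, whose free $\oT$-DGAs have degree-$0$ part $\Q_{\oT}$, so that pushouts along them are ordinary Koszul-type extensions in positive degrees. It fails precisely for the generating trivial cofibration $\Q_{\oT}\to F(D^1)$: the free $\oT$-DGA $F(D^1)$ has degree-$0$ part $\oT[x]$ (with $\delta y=x$), so pushing out along $\Q_{\oT}\to A$ replaces $A_0$ by the $\oT$-algebra coproduct $A_0\odot\oT[x]$ --- not a polynomial extension --- and every homology group of $A$ changes base along $A_0\to A_0\odot\oT[x]$. To conclude that $A\to A\odot F(D^1)$ is a quasi-isomorphism one needs, for instance, that $A_0\odot\oT[x]$ is flat over $A_0$, that $x$ is a non-zero-divisor on it, and that $(A_0\odot\oT[x])/(x)\cong A_0$; the last is Hadamard's lemma for Fermat theories, and the first two are serious theorems (for EFC they encode analytic flatness facts such as flatness of $\sO(Y)\odot\sO(\Cx)$ over $\sO(Y)$). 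These statements are the substance of \cite[Theorem 6.10]{CarchediRoytenbergHomological}, and they genuinely can fail for a rational Lawvere theory that is not Fermat: if the ideal $(x)\subset\oT[x]$ is strictly smaller than the kernel of evaluation at $0$, then $\Q_{\oT}\to F(D^1)$ is not even a quasi-isomorphism, yet it has the left lifting property against all maps surjective in positive degrees, so no model structure with the stated fibrations and weak equivalences can exist. The same issue resurfaces if you instead run Quillen's path-object argument, since the natural path object has degree-$0$ part $A_0\odot\oT[t]$.

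There is a second, smaller gap in your step (ii). The unit you must control is that of the algebra-level adjunction $N^*\dashv N$, and $N^*$ applied to a cofibrant $\oT$-DGA is \emph{not} the classical Dold--Kan inverse $\Gamma$ of the underlying complex; so ``the classical normalisation is already an equivalence of underlying homotopy theories'' does not by itself show that the derived unit is a weak equivalence. The standard repair --- and what \cite[Corollary 2.2.10]{nuitenThesis} actually does --- is to compute $N^*$ explicitly on free cell attachments, using that both model structures are transferred from compatible adjunctions with chain complexes and simplicial sets, and then conclude via the fact that $N$ detects weak equivalences. Your observations that $\z_0 N=(-)_0$ as $\oT$-algebras and that only the shuffle map (not Alexander--Whitney) is symmetric are correct and necessary, but they do not substitute for that computation.
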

\begin{proof}
As observed in the introduction to \cite{CarchediRoytenbergHomological}, the standard model structure is induced by the inclusion-truncation adjunction between non-negatively graded $\oT$-DGAs and all $\oT$-DGAs applied to the model structure of  \cite[Theorem 6.10]{CarchediRoytenbergHomological}.

The model structure on simplicial $\oT$-rings is a special case of  \cite[Remarks II.4.2] {QHA}, and the proof of \cite[Corollary 2.2.10]{nuitenThesis} (phrased for $\C^{\infty}$-rings, but applicable to any Fermat theory) then shows that $N$ is a right Quillen equivalence. 
\end{proof}

\section{Generalisations of localisation and henselisation}

We now generalise the ideas from \cite[Lemma \ref{stacks2-dgshrink}]{stacks2}, finding ways to replace the standard model structure with Quillen equivalent model structures having many more cofibrant objects, thus allowing us to calculate  mapping spaces and cotangent complexes more easily in cases of interest.

Recall that we are fixing a rational Fermat theory $\oT$. We now consider classes of morphisms behaving like open immersions or \'etale maps.
\begin{assumption}\label{openetale}
From now on, we assume that $\oE$ is a class  
  of flat morphisms between finitely presented $\oT$-algebras such that
\begin{enumerate}
\item $\oE$ is closed under pushouts along arbitrary maps of finitely presented $\oT$-algebras;
 \item $\oE$ satisfies right cancellation (Definition \ref{cancellative});
\item \label{factorcdn}  if we have a map $f \co A \to B$ in $\oE$, and a map  $i \co B \to C$ with  surjective composition $i \circ f \co A \onto C$, then there exists a factorisation $B \xra{j} B' \to C$ of $i$, with $j$ a morphism in $\oE$, such that the induced map $B'\odot_AC \to C$ is an isomorphism.
\end{enumerate}
\end{assumption}

If we think of $\oT$-algebras as rings of functions on spaces, then the final condition says that if we have $X \to Y$ in $\oE$  and a map $i \co Z \to X$ with $Z \to Y$ closed, then there exists $X' \to X$ in $\oE$ with the projection  $X'\by_YZ \to Z$ being an isomorphism, and the projection $X'\by_YZ\to X' \to X$ then being $i$.

\begin{definition}
 Refer to a morphism of $\oT$-algebras as an $\oE$-morphism if it arises as a pushout of a morphism in $\oE$ between finitely presented $\oT$-algebras.
\end{definition}

\begin{lemma}\label{factorlemma}
 If the other conditions of Assumption \ref{openetale} hold, then Assumption \ref{openetale}.\ref{factorcdn} also holds in either of the following situations:
\begin{enumerate}
 \item all morphisms of $\oE$ are epimorphisms, or
\item $\oE$ contains a map $\oT[x]\to D$ with $D/(x^2-x) \cong \oT[x]/x$.
\end{enumerate}
\end{lemma}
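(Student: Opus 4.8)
The plan is to handle the two situations separately: situation (1) is a formal consequence of the epimorphism hypothesis, while situation (2) requires us first to extract an unramifiedness statement from the standing axioms and only then to invoke $D$.

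For situation (1) I would simply take $B'=B$ and $j=\id_B$. First note that $\id_B \in \oE$: applying right cancellation (Assumption \ref{openetale}.(2)) to $A \xra{f} B \xra{\id_B} B$, in which both $f$ and $\id_B\circ f = f$ lie in $\oE$. It then remains to check that the canonical map $p\co B\odot_A C \to C$ induced by $i$ and $\id_C$ is an isomorphism. Its section $\iota_C\co C \to B\odot_A C$ is the pushout of the epimorphism $f$ along $i\circ f$, hence is itself an epimorphism; an epimorphism which is also a split monomorphism is invertible, so $\iota_C$, and therefore $p$, is an isomorphism.

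The crux is situation (2). The key point I would establish first is that the remaining axioms \emph{already} force every $f\co A\to B$ in $\oE$ to be unramified. Indeed, the coprojection $\iota_1\co B \to B\odot_A B$ is a pushout of $f$, hence lies in $\oE$ by Assumption \ref{openetale}.(1); since the multiplication $\delta\co B\odot_A B \to B$ satisfies $\delta\circ\iota_1=\id_B\in\oE$, right cancellation yields $\delta\in\oE$, so $\delta$ is flat. As $\delta$ is a surjection whose kernel $I_\Delta$ is finitely generated (by $z\ten 1-1\ten z$ over a finite generating set of $B$ over $A$), flatness of the quotient forces $I_\Delta=I_\Delta^2$, and a finitely generated idempotent ideal is generated by an idempotent $e_0\in B\odot_A B$. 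Now write $P:=B\odot_A C$, with its split surjection $p\co P\to C$ satisfying $p\circ\iota_C=\id_C$ and $p\circ\iota_B=i$. I would transport $e_0$ along the map $\Psi\co B\odot_A B\to P$ that is the identity on the first factor and $i$ on the second, setting $e:=\Psi(e_0)$; this is idempotent with $p(e)=i\,\delta(e_0)=0$. Since $p$ is the base change of $\delta$ along $i$, right exactness of base change gives $\ker p=(\Psi(e_0))=(e)$, so $P/(e)\cong C$. Because $\iota_B\co B\onto P$ is surjective (the image of $C$ lands in that of $B$, as $i\circ f$ is surjective), I may choose $b\in B$ with $\iota_B(b)=e$ and set $B':=B\odot_{\oT[x]}D$, pushing $\oT[x]\to D$ out along $x\mapsto b$; then $j\co B\to B'$ lies in $\oE$ by Assumption \ref{openetale}.(1). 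A factorisation $q\co B'\to C$ of $i$ exists because $x\mapsto i(b)=0$ extends over $D$ via $D\to D/(x^2-x)\cong \oT[x]/x\to C$. Finally, commuting pushouts and using that $e^2=e$ together with $D/(x^2-x)\cong\oT[x]/x$,
\[
 B'\odot_A C \cong P\odot_{\oT[x]}D \cong P\odot_{\oT[x]/(x^2-x)}\oT[x]/x \cong P/(e)\cong C,
\]
where the composite is the canonical map, giving the required isomorphism.

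The hard part is the second situation, and within it the conceptual step is recognising that unramifiedness (the existence of the diagonal idempotent $e_0$) is not an extra hypothesis but is already produced by flatness, pushout-stability and right cancellation via the diagonal argument above; the accompanying technical point is verifying $\ker p=(e)$ cleanly through base change. Once these are in hand, the role of $D$ is only the bookkeeping step that converts the idempotent $e$ into an honest $\oE$-localisation $j\co B\to B'$, and the rest is routine manipulation of pushouts.
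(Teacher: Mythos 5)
Your proof is correct, and its skeleton matches the paper's: for (1) the coprojection $C \to B\odot_A C$ is an epimorphism (pushout of the epimorphism $f$) with a retraction, hence an isomorphism; for (2) one produces an idempotent generating $\ker(B\odot_A C \to C)$, lifts it to $B$, and pushes out $\oT[x]\to D$ along the lift, with exactly your final chain of pushout identifications. The one genuine local difference is where the idempotent comes from. The paper applies right cancellation directly to $C \xra{\iota_C} C\ten_A B \xra{p} C$ to conclude that $p$ lies in $\oE$, hence is flat, so its finitely generated kernel is generated by an idempotent; note that this step tacitly uses $\id_C \in \oE$, which Assumption \ref{openetale} as stated does not literally supply (there is no $\oE$-morphism into $C$ to feed into your cancellation trick). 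You instead apply right cancellation to the codiagonal $B\odot_A B \to B$ — needing only $\id_B \in \oE$, which your opening trick does supply since $B$ receives the $\oE$-morphism $f$ — extract the idempotent $e_0$ there, and transport it to $P=B\odot_A C$ via $\Psi$ using right-exactness of pushouts along surjections. This detour through the diagonal is slightly longer, but it is self-contained relative to the stated axioms, and it is the same unramifiedness mechanism the paper itself exploits later in the proof of Lemma \ref{Cunramlemma}. Your explicit construction of the factorisation $q\co B' \to C$ (via $D \to D/(x^2-x)\cong \oT[x]/x \to C$) also makes precise a point the paper leaves implicit. One small caveat: your parenthetical claim that $I_\Delta$ is generated by the elements $z\ten 1 - 1\ten z$ relies on the Fermat (Hadamard) property of $\oT$; alternatively, finite generation of $\ker\delta$ follows simply from $\delta$ being a surjection between finitely presented $\oT$-algebras, which is all you actually need.
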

In other words, the second situation says that there should be an object \'etale over the affine line which contains $0$ but not $1$.
\begin{proof}
 In the first situation, the map  $C\to  B\odot_AC $ is an epimorphism equipped with a retraction, so is an isomorphism.

In the second situation, take $f \co A \to B$ in $\oE$, and  $i \co B \to C$ with  $i \circ f\co A \to C$ surjective. Surjectivity implies that $C\odot_AB \cong C\ten_{A}B \cong B/(f\ker(i \circ f))$. The pushout  $C \to C\ten_{A}B$ of $f$
 is automatically an $\oE$-morphism, and by right cancellation the map $ C\ten_{A}B \to C$ induced by $i$ is also an $\oE$-morphism. 
  In particular, it corresponds to a flat closed immersion of affine schemes, so the defining ideal is generated by an idempotent $e \in C\ten_{A}B$. Lift $e$ to an element (not necessarily idempotent) $\tilde{e} \in B$. 

By pushing out the morphism $\Q[x]_{\oT} \to D$ from the hypothesis along $x \mapsto \tilde{e}$, we obtain 
 an $\oE$-morphism  $B \to B':=B\odot_{\Q[\tilde{e}]_{\oT}} D$.  It suffices to show that $B'\odot_AC \cong C$. Observe that 
\begin{align*}
C\ten_{A}B'&\cong (C\ten_{A}B)\odot_{\Q[\tilde{e}]_{\oT}} D\\
& \cong (C\ten_{A}B)\odot_{(\Q[e]/(e^2-e))_{\oT}}(\Q[e]/(e^2-e))_{\oT}\odot_{\Q[e]_{\oT}} D\\
& \cong (C\ten_{A}B)\odot_{(\Q[e]/(e^2-e)_{\oT})}(D/(e^2-e)) \\
&\cong (C\ten_{A}B)\odot_{(\Q[e]/(e^2-e)_{\oT})} (\Q[e]/(e))\\ 
&\cong (C\ten_{A}B)/(e)\\
&\cong C,
\end{align*}
as required, the penultimate step following because pushouts along surjections of $\oT$-algebras are just quotients by ideals. 
\end{proof}

\begin{examples}\label{openetaleexamples}
The simplest example satisfying Assumption \ref{openetale} is given by taking $\oT=\Com_R$ and $\oE$ the class of morphisms corresponding to open immersions between finitely presented affine schemes over $R$. Another example on the same category is given by the class of \'etale morphisms. The latter is the largest possible class, since for any morphism in $\oE$, right cancellation implies that the relative diagonal must also be in $\oE$, hence flat. But a flat closed immersion is a local isomorphism, so morphisms in $\oE$ are finitely presented, flat, and unramified, hence \'etale. 

In general, any morphism $A \to B$ of finitely presented $\oT$-algebras must be unramified in the sense that the diagonal $B\odot_AB \to B$ is also flat. If the class of flat unramified morphisms  is closed under pushouts, then it automatically satisfies right cancellation.

The main examples which will concern us are when $\oT$ is the theory of EFC $\Cx$-algebras and we take $\oE$ to consist either of finite localisations or of \'etale maps (i.e. local biholomorphisms) of finitely presented EFC $\Cx$-algebras. These are flat by Lemma \ref{flatlemma}, are automatically closed under pushouts, and satisfy right cancellation by Lemma \ref{Steincancellemma}. For Assumption \ref{openetale}.\ref{factorcdn}, we use Lemma \ref{factorlemma}, taking the finite localisation $\sO(\Cx) \to \sO(\Cx\setminus \{1\})$, since $\{0,1\}\by_{\Cx}(\Cx  \setminus \{1\})= \{0\}$. 
 
We will also consider the case when $\oT$ is the theory of EFC $K$-algebras over a non-Archimedean field $K$, and take $\oE$ to correspond to open immersions or \'etale morphisms (in the sense of \cite[\S 8.1]{FresnelvdPut}) of Stein spaces; between finitely presented EFC-algebras, \'etale morphisms $A \to B$ are just those inducing isomorphisms $\Omega^1_A\ten_AB \to \Omega^1_B$ on the modules of EFC differentials.
Then Assumption \ref{openetale} is satisfied, applying Lemma \ref{factorlemma} to the immersion of the open unit disc in $\bA^1_K$ to satisfy  Assumption \ref{openetale}.\ref{factorcdn}.

Another example satisfying Assumption \ref{openetale}  is given by taking $\oT$ to be $\C^{\infty}$ and $\oE$ to be the class of morphisms corresponding to open immersions of finitely presented affine $\C^{\infty}$-schemes.
\end{examples}

\subsection{$\oE$-localisation}

We now establish a partial analogue of  \cite[Lemma \ref{stacks2-dgshrink}]{stacks2}, replacing each $\oT$-DGA with a form of localisation or henselisation to give a Quillen equivalent model structure. For EFC-DGAs, this results in a model structure for which EFC $\Cx$-algebras of holomorphic functions on Stein submanifolds of $\Cx^n$ are cofibrant.

\begin{definition}\label{locdef}
 Given a morphism $ f \co A \to B$ of $\oT$-algebras, define the $\oE$-localisation of $(A/B)^{\loc_{\oE}}$ of $A$ along $B$ to be the $\oT$-algebra given by the colimit of all EFC $\Cx$-algebras $C$ equipped with maps $A \xra{g} C \to B$ factorising $f$, with $g$ an $\oE$- morphism.
\end{definition}

\begin{remark}
 Since   $\oE$ is  closed under pushout and composition, right cancellation ensures that the colimit in Definition \ref{locdef} is taken over a filtered category.
\end{remark}

\begin{lemma}\label{UFSlemma}
The factorisation $A \to  (A/B)^{\loc_{\oE}} \to B$ of Definition \ref{locdef} is, up to unique isomorphism, the only factorisation $A \xra{g} C \xra{u} B$ of  $ f \co A \to B$ for which $g$ is a filtered colimit of morphisms in $\oE$ and $u$ has the unique right lifting property with respect to morphisms in $\oE$.
\end{lemma}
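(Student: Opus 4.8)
The plan is to recognise the statement as the uniqueness clause of the orthogonal factorisation system whose left class is generated by $\oE$ under pushout and filtered colimit, and whose right class consists of the maps with the unique right lifting property against $\oE$. Write $C := (A/B)^{\loc_{\oE}}$. By Definition \ref{locdef} and the remark following it, $C = \LLim_\alpha C_\alpha$ is a filtered colimit over the category whose objects are factorisations $A \xra{g_\alpha} C_\alpha \xra{u_\alpha} B$ of $f$ with each $g_\alpha$ an $\oE$-morphism, and whose transition maps are the $\oE$-morphisms $C_\alpha \to C_{\alpha'}$ compatible with the structure maps from $A$ and to $B$. Then $g \co A \to C$ is by construction the filtered colimit of the $\oE$-morphisms $g_\alpha$, so the first required property holds tautologically; the substance is to check that $u \co C \to B$ has the unique right lifting property against $\oE$, and then to derive uniqueness of the whole factorisation formally.

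First I would prove existence of lifts. Given $e \co P \to Q$ in $\oE$ and a square with legs $p \co P \to C$ and $q \co Q \to B$ satisfying $up = qe$, I use that $P$ is finitely presented, hence compact, so $p$ factors as $P \xra{p_\alpha} C_\alpha \to C$ for some $\alpha$. Form the pushout $C_\alpha' := C_\alpha \odot_P Q$ of $e$ along $p_\alpha$; since $\oE$ is closed under pushouts, $C_\alpha \to C_\alpha'$ is an $\oE$-morphism and so $A \to C_\alpha'$ is again one. As $u_\alpha p_\alpha = up = qe$, the maps $u_\alpha$ and $q$ glue to a map $C_\alpha' \to B$, exhibiting $C_\alpha'$ as an object of the diagram. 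The composite $Q \to C_\alpha' \to C$ is then a diagonal: the two triangles commute because $C_\alpha \to C_\alpha'$ is a morphism of the diagram (so $C_\alpha \to C_\alpha' \to C$ equals $C_\alpha \to C$) and because the map $C \to B$ is the colimit of the $u_\alpha$.

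The uniqueness of the diagonal is the step I expect to be the main obstacle, since it forces the use of the axioms beyond closure under pushout. Given two diagonals $d, d' \co Q \to C$ agreeing on $P$ and over $B$, compactness of $P$ and $Q$ lets me realise both at a single stage as maps $d_\beta, d'_\beta \co Q \to C_\beta$ with $d_\beta e = d'_\beta e$ and $u_\beta d_\beta = u_\beta d'_\beta = q$. I then form $Q \odot_P Q$, with coprojections $i_1, i_2$ (each a pushout of $e$, hence in $\oE$) and codiagonal $\nabla \co Q \odot_P Q \to Q$; as $\nabla$ is the algebraic incarnation of the relative diagonal of $e$, right cancellation applied to $\nabla \circ i_1 = \id_Q$ shows $\nabla \in \oE$, exactly as in the relative-diagonal discussion of Examples \ref{openetaleexamples}. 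Pushing $\nabla$ out along $(d_\beta, d'_\beta) \co Q \odot_P Q \to C_\beta$ yields an $\oE$-morphism $C_\beta \to C_\gamma := C_\beta \odot_{Q \odot_P Q} Q$ in which the images of $d_\beta$ and $d'_\beta$ coincide; and since $u_\beta \circ (d_\beta, d'_\beta) = q \circ \nabla$, the maps $u_\beta$ and $q$ induce a map $C_\gamma \to B$, so $C_\gamma$ again lies in the diagram and $d = d'$ already in $C$. This gives the unique right lifting property of $u$ against $\oE$.

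Finally, uniqueness of the factorisation up to unique isomorphism is the standard orthogonality argument. For a fixed morphism $v$, the class of morphisms admitting unique lifts against $v$ is closed under pushout, composition and filtered colimits in the arrow category; hence any $u'$ with the unique right lifting property against $\oE$ is right orthogonal to every $\oE$-morphism and to every filtered colimit of such, in particular to $g$. Given a second factorisation $A \xra{g'} C' \xra{u'} B$ of the stated type, orthogonality of $g$ to $u'$ and of $g'$ to $u$ produces unique comparison maps $C \to C'$ and $C' \to C$ compatible with the maps from $A$ and to $B$; their two composites, together with the respective identities, are diagonals in the squares defined by $u$ and $u'$, so the uniqueness clauses force them to be $\id_C$ and $\id_{C'}$. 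This exhibits the unique isomorphism between the two factorisations.
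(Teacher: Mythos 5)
Your proof is correct, but it takes a genuinely different route from the paper's. The paper's entire proof is a one-line citation: Assumption \ref{openetale} (minus its final condition) verifies the hypotheses of \cite[Lemma 2, Lemma 11 and Theorem 14]{anel}, which produce a unique factorisation system on $\oT$-algebras whose left class is $\ind(\oE)$, and the lemma is the uniqueness clause of that system. What you have done is essentially reprove that general machinery by hand in this case: your existence-of-lifts step (descend $P \to C$ to a finite stage by compactness, then push out $e$ and re-embed in the diagram), your uniqueness-of-diagonals step (the codiagonal $\nabla \co Q\odot_P Q \to Q$ lies in $\oE$ by right cancellation, then push it out to merge the two candidate diagonals), and the closing orthogonality argument are exactly the ingredients of Anel's proof; the codiagonal trick is also the same move the paper itself makes in Examples \ref{openetaleexamples} and Lemma \ref{Cunramlemma}. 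Your route buys self-containedness and makes explicit where each axiom enters --- pushout-closure for existence, right cancellation for uniqueness, compactness of finitely presented algebras to work at finite stages --- while the paper's route is shorter and inherits the general theory's closure properties of the left class without re-verification. One caveat applying equally to both: you use that composites of $\oE$-morphisms are $\oE$-morphisms (so $A \to C_\alpha'$ and $A \to C_\gamma$ stay in the diagram) and that identities behave as $\oE$-morphisms when cancelling $\nabla \circ i_1 = \id_Q$; neither is literally a clause of Assumption \ref{openetale}, but the paper relies on both too (in the remark following Definition \ref{locdef} and in Lemma \ref{Cunramlemma}), so you are on the same footing.
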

\begin{proof}
 Assumption \ref{openetale} (less the final condition) ensures that the conditions of \cite[Lemma 2, Lemma 11 and Theorem 14]{anel} are satisfied, giving  a unique factorisation system on the category of $\oT$-algebras, in which the left class is $\ind(\oE)$.
\end{proof}

\begin{examples}
Taking $\oE$ to be the class of finite localisations of commutative rings gives rise to the Zariski factorisation system of \cite[\S 4.2]{anel}, in which case  $(A/B)^{\loc_{\oE}}$ is localisation of $A$ along $B$, and $(A/B)^{\loc_{\oE}} \to B$ is conservative in the sense that an element of $(A/B)^{\loc_{\oE}}$ is a unit if and only if its image in $B$ is so.

Taking $\oE$ to be the class of \'etale morphisms of commutative rings gives rise to the \'etale factorisation system of \cite[\S 4.2]{anel}, in which case $(A/B)^{\loc_{\oE}}$ is the relative Henselisation of $A$ along $B$, with $A \to (A/B)^{\loc_{\oE}}$ ind-\'etale and $(A/B)^{\loc_{\oE}} \to B$ Henselian.

There are similar interpretations when we take $\oT$ to be EFC, with $\oE$ respectively the finite localisations or local biholomorphisms of finitely presented EFC $\Cx$-algebras. In the latter case, the morphism $(A/B)^{\loc_{\oE}} \to B$ is determined by having the unique right lifting property with respect to  $\Cx \to \Cx^{\Z}$ and  $\sO(\Delta) \to \sO(\Cx)$.

\end{examples}

\begin{lemma}\label{surjloclemma}
 Given a surjection $A \to C$ of $\oT$-algebras, we have a natural isomorphism 
\[
 C\ten_{A}(A/C)^{\loc_{\oE}} \cong C. 
\]
\end{lemma}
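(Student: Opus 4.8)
The plan is to realise the natural map $\phi\co C\ten_A(A/C)^{\loc_{\oE}}\to C$ as a split epimorphism and then upgrade it to an isomorphism, using the surjectivity of $A\to C$ together with Assumption \ref{openetale}.\ref{factorcdn}. First I would record that, since $A\to C$ is surjective and quotients of $\oT$-algebras are again $\oT$-algebras, the functor $C\ten_A(-)$ coincides with the pushout $C\odot_A(-)$ (as already used in the proof of Lemma \ref{factorlemma}), and in particular $C\odot_A C\cong C$. Writing $L:=(A/C)^{\loc_{\oE}}$ and taking the factorisation $A\xra{g}L\xra{u}C$ of Definition \ref{locdef} (with $g\in\ind(\oE)$ and $u$ in the right class, by Lemma \ref{UFSlemma}), I base-change along $A\to C$ to obtain $C\xra{g'}C\odot_A L\xra{\phi}C$ whose composite is $\id_C$: thus $\phi=C\odot_A u$ is a retraction with section $g'=C\odot_A g$. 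This reduces the lemma to showing that $\phi$ is an isomorphism.

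For this I would work with the defining filtered colimit $L=\LLim_D D$, ranging over factorisations $A\xra{g_D}D\xra{u_D}C$ with $g_D$ an $\oE$-morphism. Since $C\ten_A(-)$ preserves filtered colimits, $C\odot_A L=\LLim_D(C\odot_A D)$, and the result follows once I show that the objects $D$ for which $\phi_D\co C\odot_A D\to C$ is an isomorphism are cofinal in the diagram: then $C\odot_A L\cong\LLim_D C\cong C$, with colimit map $\phi$ (the transition maps become identities over $C$ under these isomorphisms, so the colimit of the resulting constant system is $C$).

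The heart of the argument is this cofinality step, where I would re-run the idempotent construction from the proof of Lemma \ref{factorlemma}. Fix $D$. The section $g'_D\co C\to C\odot_A D$ of $\phi_D$ is an $\oE$-morphism (a pushout of $g_D$), and $\phi_D g'_D=\id_C$, so by right cancellation (Definition \ref{cancellative}), which holds for the left class $\ind(\oE)$ of any orthogonal factorisation system, the retraction $\phi_D$ itself lies in $\ind(\oE)$; it is therefore flat and unramified, and being surjective it is a flat closed immersion. Writing $g_D$ as a pushout of a morphism of $\oE$ between finitely presented $\oT$-algebras makes $C\to C\odot_A D$ a pushout of a finitely presented morphism, hence finitely presented, so the kernel of $\phi_D$ is finitely generated and is therefore generated by an idempotent $e_D$. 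Since $D\onto C\odot_A D$ (the latter being $D$ modulo the image of $\ker(A\to C)$), I can lift $e_D$ to $\tilde e_D\in D$ and push out the \'etale-over-the-line object of Assumption \ref{openetale} along $x\mapsto\tilde e_D$, exactly as in Lemma \ref{factorlemma}, producing an $\oE$-morphism $D\to D'$ over $C$ with $C\odot_A D'\cong C$. Such $D'$ lie in the diagram (composites of $\oE$-morphisms are $\oE$-morphisms) and are cofinal, which finishes the proof.

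The main obstacle is precisely the mismatch between Assumption \ref{openetale}.\ref{factorcdn}, which is phrased for morphisms of $\oE$ between finitely presented $\oT$-algebras, and the general $\oE$-morphisms $g_D$ appearing in the defining colimit, whose source $A$ need not be finitely presented. My proposed resolution is to note that the idempotent-splitting argument of Lemma \ref{factorlemma} never uses finite presentation of the source: it needs only that $C\to C\odot_A D$ be finitely presented (guaranteed by the finitely presented generator of $g_D$), so that the defining ideal is finitely generated, and that $D\onto C\odot_A D$, so that the idempotent lifts. Hence the construction applies verbatim to each $\oE$-morphism in the diagram, and I expect the verification that it does so to be where most of the care is required.
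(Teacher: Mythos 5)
Your overall skeleton (realise $(A/C)^{\loc_{\oE}}$ as the filtered colimit over factorisations $A \to D \to C$, apply $C\ten_A(-)$, and prove that the $D$ with $C\ten_AD \cong C$ are cofinal) is exactly the paper's, but your execution of the cofinality step has a genuine gap. To split off the idempotent you push out along ``the \'etale-over-the-line object of Assumption \ref{openetale}'', i.e.\ a map $\oT[x]\to D$ in $\oE$ with $D/(x^2-x)\cong \oT[x]/x$. No such object is part of Assumption \ref{openetale}: it is the \emph{hypothesis} of case (2) of Lemma \ref{factorlemma}, which is one of two sufficient conditions for \emph{verifying} Assumption \ref{openetale}.\ref{factorcdn} in examples. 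The lemma you are proving is asserted for any class $\oE$ satisfying Assumption \ref{openetale}, where condition \ref{factorcdn} is an axiom that may hold for entirely different reasons --- e.g.\ in case (1) of Lemma \ref{factorlemma}, where all morphisms of $\oE$ are epimorphisms (say $\oE =$ isomorphisms), no map $\oT[x]\to D$ with $D/(x^2-x)\cong\oT[x]/x$ exists at all. So your argument proves the lemma only under an extra hypothesis, not in the stated generality; re-deriving condition \ref{factorcdn} is the wrong move when you are entitled to quote it.

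The mismatch you correctly identify at the end --- Assumption \ref{openetale}.\ref{factorcdn} is stated for maps in $\oE$, whose sources are finitely presented, while the $\oE$-morphisms $g_D$ in the colimit have source $A$ --- is real, but the paper resolves it by a one-line reduction rather than by strengthening the hypotheses: since $\oE$-localisation and isomorphisms are stable under filtered colimits, one may write the surjection $A\onto C$ as a filtered colimit of surjections $A_i \onto C_i$ with $A_i$ finitely presented and thereby assume $A$ itself is finitely presented. Once $A$ is finitely presented, every $\oE$-morphism $A\to B$ actually lies in $\oE$ (Assumption \ref{openetale}(1): closure under pushouts along maps of finitely presented algebras, noting such pushouts are again finitely presented), and Assumption \ref{openetale}.\ref{factorcdn} applies verbatim to $A\to B\to C$ to produce the required object $B'$ of the cofinal subcategory --- no idempotents, flatness, or finite generation of kernels needed. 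Your intermediate observations (that $\phi_D$ is a retraction of an $\oE$-morphism, lies in $\mathrm{ind}(\oE)$ by right cancellation, and has finitely generated kernel) are correct but become unnecessary once the reduction is made.
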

\begin{proof}
 Since $\oE$-localisation and isomorphisms are stable under passage to filtered colimits, it suffices to prove this when $A$ is finitely presented.

Let $\cD$ be the filtered category of finitely presented $\oT$-algebras $B$ equipped with an $\oE$-morphism $f \co A \to B$ and a partial retraction $r \co B \to C$, so $(A/C)^{\loc_{\oE}}= \LLim_{B \in \cD}B$. 
It then suffices to show that the full subcategory $\cD' \subset \cD$ of objects $B$ with $C\ten_{A}B \cong C$ is cofinal. For any object  $A \xra{f} B\xra{r} C$ of $\cD$, Assumption \ref{openetale}.\ref{factorcdn} gives  an object $ A \xra{f} B' \xra{r} C$ of $\cD'$ under $A \xra{f} B \xra{r} C$, as required.
\end{proof}

\begin{definition}
Given a non-negatively graded $\oT$-DGA $A_{\bt}$, we define  $A_{\bt}^{l\oE}$ to be the $\oE$-localisation of $A_{\bt}$    along $\H_0A$, i.e.  the $\oT$-DGA
\[
 A_{\bt}^{l\oE}:= A_{\bt}\ten_{A_0}(A_0/\H_0A)^{\loc_{\oE}}.
\]
\end{definition}

The following is the technical key to all of our constructions which follow. 
\begin{proposition}\label{dgshrink}
For any non-negatively graded $\oT$-DGA $A_{\bt}$, the natural map
\[
 A_{\bt} \to A_{\bt}^{l\oE}
\]
is a quasi-isomorphism.
\end{proposition}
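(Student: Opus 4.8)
The plan is to reduce the statement to a flat base-change computation on homology and then use Lemma \ref{surjloclemma} to absorb the localisation. First I would record that $A_{\bt}$ is canonically a chain complex of $A_0$-modules: since the grading is non-negative we have $A_{-1}=0$, so $\delta$ vanishes on $A_0$, and the Leibniz rule $\delta(ab)=\delta(a)b+(-1)^{\bar a}a\delta(b)$ then shows that each $\delta\co A_n\to A_{n-1}$ is $A_0$-linear. Writing $A_0^{\loc}:=(A_0/\H_0A)^{\loc_{\oE}}$, the object $A_{\bt}^{l\oE}=A_{\bt}\ten_{A_0}A_0^{\loc}$ is the degreewise base change of this complex along $A_0\to A_0^{\loc}$, and the natural map is $a\mapsto a\ten 1$.

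The hard part will be establishing that $A_0\to A_0^{\loc}$ is flat. By Definition \ref{locdef} and the remark following it, $A_0^{\loc}=\LLim_{C\in\cD}C$ is a filtered colimit of $\oE$-morphisms $A_0\to C$, so since a filtered colimit of flat modules is flat it suffices to show each such $C$ is flat over $A_0$. Here $C=A_0\odot_{\bar A}\bar B$ for some $\bar A\to\bar B$ in $\oE$; because $\bar A$ is finitely presented, the map $\bar A\to A_0$ factors through some finitely presented $A'$ in a presentation $A_0=\LLim A'$, and Assumption \ref{openetale}.(1) puts $A'\to A'\odot_{\bar A}\bar B$ in $\oE$, hence flat. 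Pushing out further along the finitely presented transition maps $A'\to A''$ keeps us in $\oE$, and expressing $C$ as the filtered colimit of the resulting cocartesian squares identifies it with the base change of a flat map, giving flatness of $A_0\to C$. The one point needing care, which I would flag explicitly, is that for the classes $\oE$ under consideration the $\oT$-algebra pushout $\odot$ agrees with the ring-level base change on underlying rings, so that flatness really is preserved; this is where the geometric hypotheses on $\oE$ (open immersions, \'etale maps, finite localisations) are used rather than the abstract axioms alone.

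Granting flatness, flat base change yields $\H_n(A_{\bt}^{l\oE})\cong \H_n(A_{\bt})\ten_{A_0}A_0^{\loc}$ for every $n$. I would then observe that the $A_0$-action on $\H_nA$ factors through $\H_0A=A_0/\delta A_1$: for a cycle $z\in\z_nA$ and $w\in A_1$, the Leibniz rule gives $\delta(w)\cdot z=\delta(wz)$ (since $\delta z=0$ and $w$ is odd), so the boundaries $\delta A_1$ annihilate $\H_nA$. Hence
\[
 \H_n(A_{\bt})\ten_{A_0}A_0^{\loc}\cong \H_n(A_{\bt})\ten_{\H_0A}\bigl(\H_0A\ten_{A_0}A_0^{\loc}\bigr),
\]
and Lemma \ref{surjloclemma}, applied to the surjection $A_0\onto\H_0A$, identifies the inner factor with $\H_0A$. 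Therefore $\H_n(A_{\bt})\ten_{A_0}A_0^{\loc}\cong\H_n(A_{\bt})$, and tracing through the isomorphisms shows this is precisely the map induced by $a\mapsto a\ten 1$; thus $A_{\bt}\to A_{\bt}^{l\oE}$ is a quasi-isomorphism.
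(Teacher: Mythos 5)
Your overall route is exactly the paper's: note that $\delta$ is $A_0$-linear, so $A_{\bt}^{l\oE}$ is a degreewise base change along $A_0\to(A_0/\H_0A)^{\loc_{\oE}}$; establish flatness of that map; apply flat base change to homology; observe that the $A_0$-action on $\H_nA$ factors through $\H_0A$ (your computation $\delta(w)z=\delta(wz)$ is precisely the point the paper leaves implicit); and finish with Lemma \ref{surjloclemma}. All of that is correct. The genuine problem is in the step you flag yourself: your justification of flatness of each $\oE$-morphism $A_0\to C$ rests on the claim that for the geometric classes $\oE$ the pushout $\odot$ agrees with ring-level base change $\ten$. That claim is false. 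If it held, then pushing out an open immersion along itself would give $\bar B\ten_{\bar A}\bar B\cong\bar B\odot_{\bar A}\bar B\cong\bar B$, i.e.\ every finite localisation would be an epimorphism of commutative rings; but $\sO(\Cx)\to\sO(\Cx\setminus\{0\})$ is not one, since $\Spec\sO(\Cx\setminus\{0\})\to\Spec\sO(\Cx)$ is far from injective (free ultrafilters on the zero set $\{1/n\}_{n\ge 1}$ of $\sin(\pi/x)$ give distinct primes of $\sO(\Cx\setminus\{0\})$, all contracting to $(0)$ in $\sO(\Cx)$ because an entire function vanishing on a set accumulating at $0$ vanishes identically). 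The divergence of $\odot$ from $\ten$ along maps in $\oE$ is exactly why this theory is not ordinary commutative algebra. Moreover, even if some such identity were available for particular geometric classes, an argument invoking ``geometric hypotheses rather than the abstract axioms'' could not prove the proposition as stated, which is asserted for an arbitrary rational Fermat theory $\oT$ and an arbitrary class $\oE$ satisfying Assumption \ref{openetale}.

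The gap is repairable within the axioms, and the repair is what the paper's one-line appeal to ``ind-$\oE$, hence flat'' amounts to. Your reductions are fine up to the last step: $C=A_0\odot_{\bar A}\bar B=\LLim_{A''}(A''\odot_{\bar A}\bar B)$ is a filtered colimit of maps $A''\to A''\odot_{\bar A}\bar B$ lying in $\oE$ (Assumption \ref{openetale}(1)), hence flat, over the varying finitely presented bases $A''$ with $\LLim A''=A_0$. What you need here is not that these squares are cocartesian on underlying rings (they are not), but the standard commutative-algebra fact that a filtered colimit of flat modules over a filtered system of rings is flat over the colimit ring: by Lazard's equational criterion, any relation in the colimit is defined and holds at a finite stage, is trivialised there by flatness, and the trivialisation maps forward. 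Applying this once over the varying bases $A''$ gives flatness of $A_0\to C$, and once more over the fixed base $A_0$ (your first reduction) gives flatness of $A_0\to(A_0/\H_0A)^{\loc_{\oE}}$, purely from Assumption \ref{openetale}. With that substitution your proof is correct and coincides with the paper's.
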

\begin{proof}
 Since the morphism $A_0 \to A_0^{l\oE}$ is in ind-$\oE$, it is flat (Assumption \ref{openetale}), so by flat base change, we have
\[
 \H_n(A_{\bt}^{l\oE})\cong (\H_nA)\ten_{A_0} (A^{l\oE})_0,
\]
and we can rewrite this as $(\H_nA)\ten_{\H_0A}(\H_0A\ten_{A_0} (A^{l\oE})_0)$. Then Lemma \ref{surjloclemma} gives
\[
 \H_0A\ten_{A_0}(A_0/\H_0A)^{\loc_{\oE}} \cong \H_0A, 
\]
which completes the proof.
\end{proof}

\begin{proposition}\label{locmodelprop}
 There is a cofibrantly generated model structure (which we call the $\oE$-model structure) on the category of those non-negatively graded $\oT$-DGAs $A_{\bt}$ with $A_{\bt}\cong A_{\bt}^{l\oE}$, in which  weak equivalence are quasi-isomorphisms and
 fibrations are surjective in strictly positive chain degrees.  The inclusion  functor to all  non-negatively graded $\oT$-DGAs is then a right Quillen equivalence.

Every transfinite composition of $\oE$-morphisms  is a cofibration in this model structure, and in particular any $\oT$-algebra $B$ equipped with an $\ind(\oE)$-morphism $\oT[x_1, \ldots, x_n] \to B$ is cofibrant.
\end{proposition}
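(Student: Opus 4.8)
The plan is to realise the $\oE$-model structure as a model structure transferred from the standard one along a reflective localisation, and then to read off the statements about cofibrations by lifting against trivial fibrations. Write $\cM$ for the category of all non-negatively graded $\oT$-DGAs, equipped with its standard model structure (Proposition \ref{stdmodelprop}), with generating cofibrations $I$ and generating trivial cofibrations $J$, and write $\cC\subset\cM$ for the full subcategory of local objects, those with $A_\bt\cong A_\bt^{l\oE}$. First I would show that $L\co A_\bt\mapsto A_\bt^{l\oE}$ is a functor left adjoint to the inclusion $\iota\co\cC\to\cM$. Every degree-$0$ algebra is local, and a map $A_\bt^{l\oE}\to B_\bt$ extending a given $A_\bt\to B_\bt$ into a local $B_\bt$ amounts, by the formula $A_\bt^{l\oE}=A_\bt\ten_{A_0}(A_0/\H_0A)^{\loc_{\oE}}$, to an $\oT$-algebra map $(A_0/\H_0A)^{\loc_{\oE}}\to B_0$ compatible with $A_0\to B_0$. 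Such a map exists and is unique because $A_0\to(A_0/\H_0A)^{\loc_{\oE}}$ lies in $\ind(\oE)$ while, by locality of $B_\bt$, the map $B_0\to\H_0B$ has the unique right lifting property with respect to $\oE$ (Lemma \ref{UFSlemma}). This gives $L\dashv\iota$, with unit the localisation map $A_\bt\to A_\bt^{l\oE}$; by Proposition \ref{dgshrink} and two-out-of-three, $L$ preserves quasi-isomorphisms.

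Next I would apply Kan's transfer theorem to $L\dashv\iota$. The transferred fibrations and weak equivalences on $\cC$ are created by $\iota$, hence are precisely the surjections in positive degrees and the quasi-isomorphisms, as required, while the generating (trivial) cofibrations become $L(I)$ and $L(J)$. Smallness poses no problem, since $\cM$ and its reflective localisation $\cC$ are locally presentable. The substantive point, and the main obstacle, is the acyclicity condition: every relative $L(J)$-cell complex must be a quasi-isomorphism. Here I would use that colimits in $\cC$ are obtained by applying $L$ to colimits in $\cM$. At a successor stage, attaching an $L(j)$-cell along $LB\to X_\alpha$ (for $j\co B\to B'$ in $J$) is $L$ applied to the pushout of $j$ along the adjoint map $B\to X_\alpha$; that pushout is a trivial cofibration in $\cM$, so precomposing the localisation map and invoking Proposition \ref{dgshrink} shows $X_\alpha\to X_{\alpha+1}$ is a quasi-isomorphism in $\cM$. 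At a limit stage $X_\beta=L(\LLim_{\alpha<\beta}X_\alpha)$ I would use that homology of $\Q$-complexes commutes with filtered colimits, so that $X_0\to\LLim_{\alpha<\beta}X_\alpha$ is a quasi-isomorphism, and then apply Proposition \ref{dgshrink} once more. A transfinite induction then yields that $X_0\to X_\alpha$ is a quasi-isomorphism for all $\alpha$, giving acyclicity and hence the model structure.

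By construction $\iota$ is then right Quillen and $L$ left Quillen. To see the Quillen equivalence I would observe that every object is fibrant (the terminal $\oT$-DGA is zero, so every map to it is surjective in all degrees), so by the standard criterion it suffices that $\iota$ reflect weak equivalences, which is immediate as $\iota$ is the inclusion, and that the derived unit be a weak equivalence; but the unit is exactly the localisation map $A_\bt\to A_\bt^{l\oE}$, a quasi-isomorphism by Proposition \ref{dgshrink}.

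Finally, for the cofibration statements I would use that cofibrations in $\cC$ are characterised by left lifting against trivial fibrations, which (being $L(I)$-injective) are the quasi-isomorphisms in $\cC$ that are surjective in positive, hence in all, degrees. Given an $\oE$-morphism $A\to A'$ of degree-$0$ algebras and a trivial fibration $p\co X\to Y$ between local objects, a lifting problem reduces in degree $0$ to lifting $A\to A'$ against $p_0\co X_0\to Y_0$. Since $X,Y$ are local, $X_0\to\H_0X$ and $Y_0\to\H_0Y$ have the unique right lifting property with respect to $\oE$, and $\H_0X\xra{\cong}\H_0Y$; as the class of such maps is closed under composition and satisfies the cancellation that a map lies in it whenever its postcomposition with an element of the class and that element both do, it follows that $p_0$ also has this property, so the lift exists. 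Hence $\oE$-morphisms, and therefore their transfinite compositions and all $\ind(\oE)$-morphisms, are cofibrations. Since $\oT[x_1,\ldots,x_n]$ is local and is the image under the left Quillen functor $L$ of a cofibrant object of $\cM$, it is cofibrant in $\cC$; composing with an $\ind(\oE)$-morphism $\oT[x_1,\ldots,x_n]\to B$ then exhibits $B$ as cofibrant.
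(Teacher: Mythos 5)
Your proposal is correct and follows essentially the same route as the paper: the paper also realises $(-)^{l\oE}$ as an idempotent monad (i.e.\ a reflective localisation) via Lemma \ref{UFSlemma}, invokes Kan's transfer theorem with Proposition \ref{dgshrink} supplying the acyclicity and the Quillen equivalence, and proves the cofibration statements by the same orthogonality argument (the paper phrases it as a fibre-product formula for $\Hom$-sets where you use left cancellation for unique-RLP classes, but these are equivalent). Your write-up simply makes explicit the details (the adjunction $L\dashv\iota$, the transfinite induction for acyclicity) that the paper leaves to the reader.
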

\begin{proof}
Since  Proposition \ref{dgshrink} implies that $\H_0(A^{l\oE}_{\bt}) \cong \H_0(A_{\bt})$, it follows from unique factorisation that the functor $A_{\bt} \mapsto A^{l\oE}_{\bt}$ is idempotent, and in particular a monad, with algebras for the monad being   $\oT$-DGAs $A_{\bt}$ with $A_{\bt}\cong A_{\bt}^{l\oE}$.  For the $\oE$-model structure,  factorisations are just given by applying $(-)^{l\oE}$ to the  factorisation in the standard model category of non-negatively graded $\oT$-DGAs. It is straightforward to check  directly that this gives a model structure using Proposition \ref{dgshrink}, or we can appeal to Kan's transfer theorem \cite[Theorem 11.3.2]{Hirschhorn}. Proposition \ref{dgshrink} also ensures that the resulting Quillen adjunction is a Quillen equivalence.

Given a morphism  $f \co A \to B$ in $\oE$, observe that for any  non-negatively graded $\oT$-DGA of the form $C_{\bt}\cong C_{\bt}^{l\oE}$, we have
\[
 \Hom_{\oT-DGA}(B,C_{\bt}) \cong \Hom_{\oT-DGA}(A,C_{\bt})\by_{ \Hom_{\oT}(A,\H_0C_{\bt})}\Hom_{\oT}(B,\H_0C_{\bt}),
\]
since $\Hom_{\oT-DGA}(B,C_{\bt})=\Hom_{\Alg(\oT)}(B,C_0)$, and $C_0 \to \H_0C_{\bt}$ has the unique right lifting property with respect to $\oE$-morphisms, by Lemma \ref{UFSlemma}.
Since any trivial fibration $C_{\bt}\to D_{\bt}$ is an isomorphism on $\H_0$, it follows that $f$ has the (unique) left lifting property with respect to trivial fibrations, so is a cofibration; any pushout or transfinite composition of cofibrations is also automatically a cofibration.

Finally, recall  that $\oT[x_1, \ldots, x_n] $ is cofibrant in the standard model structure, and hence also in the $\oE$-model structure, so for any morphism $\oT[x_1, \ldots, x_n] \to B$ in $\oE$, it follows that $B$ is also cofibrant. 
\end{proof}

\begin{examples}
If we take $\oT$-algebras to be commutative rings, and $\oE$ to be the class of finite localisations (resp. \'etale maps), then $(A/B)^{\loc_{\oE}}$ is localisation (resp. Henselisation) of $A$ along $B$,   Proposition \ref{locmodelprop} gives a model structure on dgas $A_{\bt}$ with $A_0 \to \H_0A$ conservative (resp. Henselian), as followed from \cite[Lemma \ref{stacks2-dgshrink}]{stacks2}. The Henselian model structure has the property that smooth affine algebras are cofibrant. 

If we instead take $\oT$-algebras to be EFC $\Cx$-algebras, and $\oE$ to be finite localisations (resp. \'etale maps), then we obtain a model structure in which rings of holomorphic functions of open Stein submanifolds of $\Cx^n$ (resp. Stein manifolds admitting a local biholomorphism to $\Cx^n$) are cofibrant. 
\end{examples}

\begin{lemma}\label{mapSteinlemma}
 Given an  $\oE$-morphism $\oT[x_1, \ldots, x_n] \to A$ of $\oT$-algebras,  mapping spaces in the $\infty$-category of non-negatively graded $\oT$-DGAs localised at weak equivalences satisfy
\[
 \map(A,B) \simeq \map(\oT[x_1, \ldots, x_n],B)\by_{\Hom_{\Alg(\oT)}( \oT[x_1, \ldots, x_n],\H_0B)}\Hom_{ \Alg(\oT)}(A,\H_0B).
\]
In particular, $\pi_0\map(A,B) \cong \Hom_{\Alg(\oT)}(A,\H_0B)$ and $\pi_i\map(A,B) \cong (\H_iB)^n$ for $i>0$.
\end{lemma}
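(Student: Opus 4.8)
The plan is to compute the mapping space $\map(A,B)$ by exploiting the $\oE$-model structure from Proposition \ref{locmodelprop}, in which the object $A$ is cofibrant. Since $\oT[x_1,\ldots,x_n] \to A$ is an $\oE$-morphism, we know by Proposition \ref{locmodelprop} that $A$ is cofibrant in the $\oE$-model structure on those $\oT$-DGAs $C_{\bt}$ with $C_{\bt} \cong C_{\bt}^{l\oE}$, and that this model structure is Quillen equivalent to the standard one. The mapping space in the localised $\infty$-category can therefore be computed as the simplicial set of maps out of the cofibrant object $A$ into a fibrant replacement; because every object is fibrant in the standard model structure (fibrations being surjections in positive degrees), the issue is only cofibrancy, which we have arranged.

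First I would reduce to a formula for the homotopy mapping space by replacing $B$ with a quasi-isomorphic $\oE$-local model $B^{l\oE}$, which is legitimate since $A \to A^{l\oE}$ is a quasi-isomorphism (Proposition \ref{dgshrink}) and mapping spaces are invariant under weak equivalence. Then I would use the key hom-set identity already established inside the proof of Proposition \ref{locmodelprop}, namely that for any $\oT$-DGA $C_{\bt} \cong C_{\bt}^{l\oE}$,
\[
 \Hom_{\oT\text{-}DGA}(A,C_{\bt}) \cong \Hom_{\oT\text{-}DGA}(\oT[x_1,\ldots,x_n],C_{\bt})\by_{\Hom_{\oT}(\oT[x_1,\ldots,x_n],\H_0C_{\bt})}\Hom_{\oT}(A,\H_0C_{\bt}).
\]
The plan is to promote this set-level pullback square to a homotopy-level statement by applying it levelwise to a suitable simplicial (or path-object) resolution computing the derived mapping space. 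Since $\oT[x_1,\ldots,x_n]$ is a free/cofibrant generator, the mapping space $\map(\oT[x_1,\ldots,x_n],B)$ is simply the underlying simplicial set of $B$ in degrees $\ge 1$ together with $\Hom_{\Alg(\oT)}$ on $\H_0$; concretely, a map out of the free object is just a choice of $n$ elements, so $\map(\oT[x_1,\ldots,x_n],B)$ has the homotopy type of $B_1^n$ (via Dold--Kan, its homotopy groups are $(\H_iB)^n$ for $i>0$ and $\Hom_{\Alg(\oT)}(\oT[x_1,\ldots,x_n],\H_0B)$ on $\pi_0$).

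For the fibre-product structure I would observe that $\Hom_{\oT}(\oT[x_1,\ldots,x_n],\H_0B)$ is a discrete set (no higher homotopy, as $\H_0B$ is concentrated in degree $0$), and the right-hand map $\Hom_{\oT}(A,\H_0B) \to \Hom_{\oT}(\oT[x_1,\ldots,x_n],\H_0B)$ is likewise a map of discrete sets. Consequently the homotopy pullback computing $\map(A,B)$ replaces only the $\pi_0$ constraint: the positive homotopy groups come entirely from the free factor $\map(\oT[x_1,\ldots,x_n],B)$, giving $\pi_i\map(A,B) \cong (\H_iB)^n$ for $i>0$, while $\pi_0\map(A,B)$ is cut out as the subset of $\Hom_{\Alg(\oT)}(\oT[x_1,\ldots,x_n],\H_0B)$ compatible with $A$, which is precisely $\Hom_{\Alg(\oT)}(A,\H_0B)$ by the pullback identity applied with $C_{\bt}=\H_0B$.

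The main obstacle is justifying that the set-level pullback square promotes to a genuine homotopy pullback of mapping spaces, i.e. verifying that the simplicial enrichment of the $\oE$-model structure is compatible with the factorisation $\oT[x_1,\ldots,x_n]\to A$ so that the levelwise pullback computes the derived one. The cleanest route is to note that the functor $(-)^{l\oE}$ is idempotent and the right adjoint of a Quillen equivalence, so it preserves the relevant mapping spaces; then the fibre-sequence argument reduces to the fact that the base of the pullback is discrete, which forces the homotopy pullback and the strict pullback to agree on homotopy groups. I expect this compatibility check, rather than any computation, to be where the real work lies.
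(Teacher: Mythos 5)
Your proposal is correct and follows essentially the same route as the paper's proof: replace $B$ by $B^{l\oE}$ via Proposition \ref{dgshrink}, use cofibrancy of $A$ in the $\oE$-model structure of Proposition \ref{locmodelprop}, apply the unique-factorisation pullback identity (Lemma \ref{UFSlemma}) levelwise to a fibrant simplicial resolution of $B^{l\oE}$, and conclude via the free--forget adjunction together with discreteness of the base $\Hom_{\Alg(\oT)}(\oT[x_1,\ldots,x_n],\H_0B)$, which is constant along the resolution since its terms are all quasi-isomorphic to $B$. The compatibility check you flag as the main obstacle is resolved exactly as you suggest, so there is no gap.
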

\begin{proof}
 By Proposition \ref{dgshrink}, we may replace $B$ with its $\oE$-localisation $B^{l\oE}$. Since Proposition \ref{locmodelprop} shows $A$ is cofibrant in the $\oE$-model structure, as in \cite[\S 5.4]{hovey} the mapping space can be calculated by taking a fibrant  simplicial resolution  $\tilde{B}({\bt})$ of $B^{l\oE}$ in the 
$\oE$-model structure, and considering the simplicial set
\[
 i \mapsto \Hom_{\oT-DGA}(A,\tilde{B}(i)). 
\]

Now, since $ \oT[x_1, \ldots, x_n] \to A$ is in $\oE$,
  the unique factorisation property from Lemma \ref{UFSlemma} 
 gives an isomorphism
\begin{align*}
  &\Hom_{\oT-DGA}(A,\tilde{B}(i)) \cong \\
&\Hom_{\oT-DGA}( \oT[x_1, \ldots, x_n],\tilde{B}(i))\by_{\Hom_{\Alg(\oT)}(\oT[x_1, \ldots, x_n],\H_0B)}\Hom_{\Alg(\oT)}(A,\H_0B), 
\end{align*}
so $\map(A,B) \simeq \map(\oT[x_1, \ldots, x_n],B)\by_{\Hom_{\Alg(\oT)}(\oT[x_1, \ldots, x_n],\H_0B)}\Hom_{EFC}(A,\H_0B)$.

The final statement follows because $\map(\oT[x_1, \ldots, x_n],B) \cong (\H_0B)^n$, via the forget-free Quillen adjunction between EFC-DGAs and chain complexes.  
\end{proof}

\subsection{Cotangent complexes}\label{cotsn}

\begin{definition}\label{dgoplusdef}
 Given an $\oT$-DGA $A$ and an $A$-module $M$ in chain complexes, define the $\oT$-DGA $A \oplus M$ by setting the multiplication to be $(a_1,m_1)\cdot (a_2,m_2):= (a_1\cdot a_2, a_1\cdot  m_2 + m_1\cdot a_2)$, with $\oT$-structure on $\z_0(A \oplus M)= \z_0A \oplus \z_0M$ given by Definition \ref{oplusdef}.    
\end{definition}

\begin{definition}\label{Omegadef}
 Given an $\oT$-DGA $A$, define the complex $\Omega^1_A$ to be the $A$-module in chain complexes representing  the functor $M \mapsto \Hom_{\oT-DGA}(A, A \oplus M)\by_{\Hom_{\oT-DGA}(A,A)}\{\id\}$ of closed $\oT$-derivations from $A$ to $M$ of degree $0$. Given a morphism $R \to A$ of $\oT$-DGAs, define $\Omega^1_{A/R}$ to be the cokernel of $\Omega^1_R\ten_RA \to \Omega^1_A$.
\end{definition}

Note that for morphisms $A \la R \to B$ of $\oT$-DGAs, by universality we automatically have an isomorphism 
\[
\Omega^1_{(A\odot_RB)/R}\cong (\Omega^1_{A/R}\ten_A(A\odot_RB)) \oplus ((A\odot_RB)\ten_B \Omega^1_{B/R}).
\]
Meanwhile, properties of idempotents ensure that $\Omega^1_{(A \by B)/R} \cong \Omega^1_{A/R}\by \Omega^1_{B/R}$. 

\begin{lemma}\label{Cunramlemma}
 For any class $\oE$ of morphisms $R \to A$ of $\oT$-algebras satisfying Assumptions \ref{openetale}, the cotangent module $\Omega^1_{A/R}$ is $0$. 
\end{lemma}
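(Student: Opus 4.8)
The plan is to show that the functor $M \mapsto \Der_R(A,M)$ of relative $\oT$-derivations vanishes for every $A$-module $M$. Since $\Omega^1_{A/R}$ represents this functor (Definition \ref{Omegadef}), taking $M = \Omega^1_{A/R}$ and feeding in the universal derivation then forces $\Omega^1_{A/R} = 0$.

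First I would reduce to the case where $R \to A$ is a single morphism $f \co \bar R \to \bar A$ lying in $\oE$ between finitely presented $\oT$-algebras. Writing a general $\oE$-morphism as a pushout $A \cong R \odot_{\bar R} \bar A$, the universal property of the pushout identifies $R$-linear $\oT$-derivations $A \to M$ with $\bar R$-linear $\oT$-derivations $\bar A \to M$ (viewing $M$ as an $\bar A$-module through $\bar A \to A$), so that $\Omega^1_{A/R} \cong \Omega^1_{\bar A/\bar R} \ten_{\bar A} A$ and it suffices to prove $\Omega^1_{\bar A/\bar R} = 0$; equivalently, that every $\bar R$-linear $\oT$-derivation $\delta \co \bar A \to M$ is zero.

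The key device is then the relative diagonal. A derivation $\delta$ produces two $\oT$-algebra homomorphisms $\bar A \to \bar A \oplus M$ (in the sense of Definition \ref{oplusdef}), namely the inclusion $\iota \co a \mapsto (a,0)$ and the twisted map $a \mapsto (a, \delta a)$; both are homomorphisms under $\bar R$ (as $\delta$ kills $\bar R$) and both are sections of the projection $\bar A \oplus M \to \bar A$. By the universal property of $\bar A \odot_{\bar R} \bar A$ they assemble into a single homomorphism $\phi \co \bar A \odot_{\bar R} \bar A \to \bar A \oplus M$ which restricts to the identity along both coprojections, so its composition with the projection to $\bar A$ is the multiplication $\mu \co \bar A \odot_{\bar R}\bar A \to \bar A$. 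Since $f \in \oE$, the coprojection $i_1$ is a pushout of $f$ and hence lies in $\oE$, while $\mu \circ i_1 = \id$; right cancellation (Assumption \ref{openetale}) then puts $\mu$ in $\oE$, so $\mu$ is a flat surjection. As in the proof of Lemma \ref{factorlemma}, a flat closed immersion of affine schemes has its ideal generated by an idempotent, so $J := \ker \mu$ satisfies $J = J^2$. Because $\phi$ carries $J$ into the square-zero ideal $M \subset \bar A \oplus M$, multiplicativity of $\phi$ gives $\phi(J) = \phi(J^2) \subseteq \phi(J)^2 \subseteq M^2 = 0$; hence $\phi$ factors as $\psi \circ \mu$ for some $\psi \co \bar A \to \bar A \oplus M$. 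Composing with $i_1$ and $i_2$ and using $\mu \circ i_1 = \mu \circ i_2 = \id$ then yields $\iota = \psi = (a \mapsto (a,\delta a))$, i.e. $\delta = 0$.

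The genuinely geometric input is the unramifiedness of morphisms in $\oE$: that the relative diagonal is a flat surjection and therefore cut out by an idempotent. This is where I expect the main work to sit, and it rests on the right-cancellation axiom together with the commutative-algebra fact (already invoked in Lemma \ref{factorlemma}) that flat closed immersions are clopen. Everything else is a formal manipulation of the universal properties of $\odot_{\bar R}$ and of the square-zero extension $\bar A \oplus M$, and the base-change reduction in the first step is routine.
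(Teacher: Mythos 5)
Your proof is correct, and it shares its key geometric input with the paper's argument: both exploit right cancellation to place the multiplication map $\mu \co A\odot_RA \to A$ in $\oE$ (hence flat), and both then invoke the idempotent argument from the proof of Lemma \ref{factorlemma} for this flat surjection. Where you diverge is in how that input is converted into vanishing of $\Omega^1_{A/R}$. The paper uses the idempotent to split $A\odot_RA \cong A \by C$ as a product of $\oT$-algebras, computes $\Delta^*\Omega^1_{(A\odot_RA)/R}$ in two ways (via the product decomposition and via the coproduct formula $\Omega^1_{(A\odot_RA)/R}\cong \Omega^1_{A/R}\ten_A(A\odot_RA)\oplus(A\odot_RA)\ten_A\Omega^1_{A/R}$), and concludes that the summation map $\Omega^1_{A/R}\oplus\Omega^1_{A/R}\to\Omega^1_{A/R}$ is an isomorphism, forcing $\Omega^1_{A/R}=0$. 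You instead run the classical ``formally unramified'' argument at the level of derivations: assemble $\delta$ and the zero derivation into $\phi \co A\odot_RA \to A\oplus M$, note $\phi$ kills $J=\ker\mu$ because $J=J^2$ and $M$ is square-zero, and factor through $\mu$ to force $\delta=0$. Your finish is marginally more economical in its inputs: it needs only $J = J^2$, which already follows from flatness of $\mu$ alone (tensor the ideal sequence with $A$ over $A\odot_RA$), whereas the paper genuinely needs the idempotent to produce the product splitting. Two small remarks: your opening reduction to a morphism in $\oE$ itself is superfluous for the lemma as literally stated, since Assumption \ref{openetale} already takes $\oE$ to consist of morphisms between finitely presented $\oT$-algebras (though the base-change observation is relevant to how the lemma is applied later, e.g. in Example \ref{cotSteinlemma}); and your application of right cancellation to $\mu\circ i_1 = \id$ tacitly assumes identities lie in $\oE$, but the paper's own proof makes exactly the same tacit assumption, so this is not a defect relative to it.
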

\begin{proof}
By the cancellation property, the relative diagonal $\Delta \co A\odot_RA \to A$ lies in $\oE$, so 
 is flat. Since $\Delta$ is surjective, we may argue as in the proof of Lemma \ref{factorlemma}
to give an idempotent $e \in A\odot_RA$  generating $\ker\Delta$, and hence an isomorphism  $A\odot_RA \cong A \by C$ of $\oT$-algebras, for  $C:= (A\odot_RA)/(1-e)$, with $\Delta$ being projection onto $A$. 

We now have $\Delta^*(\Omega^1_{(A\odot_RA)/R}) \cong \Delta^*(\Omega^1_{A/R}\by \Omega^1_{C/R})=  \Omega^1_{A/R}$, but we also have $\Delta^*(\Omega^1_{(A\odot_RA)/R}) \cong \Omega^1_{A/R} \oplus \Omega^1_{A/R}$. Thus the summation map $\Omega^1_{A/R} \oplus \Omega^1_{A/R} \to \Omega^1_{A/R}$ is an isomorphism, so $\Omega^1_{A/R}\cong 0$.
\end{proof}

\begin{lemma}\label{cotlemma}
There is a right Quillen  functor  from the category of pairs  $(A,M)$ of   $\oT$-DGAs and modules to the category of $\oT$-DGAs, given by
\[
(A,M) \mapsto A \oplus M, 
\]
with left adjoint $A \mapsto (A, \Omega^1_A)$. This moreover defines a Quillen adjunction for the $\oE$-model structure of Proposition \ref{locmodelprop}. 
\end{lemma}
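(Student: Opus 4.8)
The plan is to dispatch the three assertions in turn: the adjunction $A \mapsto (A,\Omega^1_A) \dashv (A,M)\mapsto A\oplus M$, then the right Quillen property for the standard model structure of Proposition \ref{stdmodelprop}, and finally its persistence for the $\oE$-model structure of Proposition \ref{locmodelprop}.

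For the adjunction, I would first fix the natural model structure on the category of pairs, declaring a morphism $(A,M)\to(A',M')$ to be a weak equivalence when both $A\to A'$ and $M\to M'$ are quasi-isomorphisms, and a fibration when $A\to A'$ is a fibration of $\oT$-DGAs and $M\to M'$ is surjective in strictly positive chain degrees. The adjunction is then essentially the defining property of $\Omega^1$ from Definition \ref{Omegadef}: a morphism $(B,\Omega^1_B)\to(A,M)$ of pairs consists of a $\oT$-DGA map $g\co B\to A$ together with a $B$-linear map $\Omega^1_B\to g^*M$, where $g^*M$ denotes $M$ regarded as a $B$-module via $g$, and by the universal property of $\Omega^1_B$ such module maps are exactly the $\oT$-derivations $B\to g^*M$, i.e. the $\oT$-DGA maps $B\to A\oplus M$ lifting $g$. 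Summing over $g$ gives $\Hom_{\mathrm{Pairs}}((B,\Omega^1_B),(A,M))\cong\Hom_{\oT\text{-DGA}}(B,A\oplus M)$, naturally in both variables. Here I would remark that Definition \ref{Omegadef} represents closed degree-$0$ derivations into $A$-modules, but the universal derivation $d\co B\to\Omega^1_B$ then represents derivations into arbitrary $B$-modules in the usual way, which is what the relative statement needs.

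For the standard model structure, the key observation is that the differential on $A\oplus M$ respects the direct-sum splitting, so both the degree-$n$ part $A_n\oplus M_n$ and the homology $\H_n(A\oplus M)\cong \H_nA\oplus\H_nM$ decompose as direct sums. Consequently $A\oplus M\to A'\oplus M'$ is surjective in strictly positive degrees precisely when $A\to A'$ and $M\to M'$ are, and it is a quasi-isomorphism precisely when both are. Hence $(A,M)\mapsto A\oplus M$ carries fibrations to fibrations and trivial fibrations to trivial fibrations, so is right Quillen.

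The substantive point is the passage to the $\oE$-model structure, where the source is the subcategory of pairs with $A\cong A^{l\oE}$ and the target the $\oT$-DGAs with $B\cong B^{l\oE}$. Since fibrations and weak equivalences are detected exactly as before, it suffices to check that $G$ preserves $\oE$-locality, namely that $A\cong A^{l\oE}$ forces $A\oplus M\cong(A\oplus M)^{l\oE}$. The crux is that $\oE$-localisation commutes with square-zero extensions: because $M_0$ is a square-zero, hence nilpotent, ideal of $(A\oplus M)_0=A_0\oplus M_0$, and because $\oE$-morphisms are flat (Assumption \ref{openetale}) and unramified (Lemma \ref{Cunramlemma}), they are formally \'etale, so the unique infinitesimal lifting property shows every $\oE$-morphism out of $A_0\oplus M_0$ is the base change along $A_0\to A_0\oplus M_0$ of a unique $\oE$-morphism out of $A_0$. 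Feeding this into Definition \ref{locdef} yields $((A_0\oplus M_0)/\H_0(A\oplus M))^{\loc_{\oE}}\cong(A_0/\H_0A)^{\loc_{\oE}}\oplus(M_0\ten_{A_0}(A_0/\H_0A)^{\loc_{\oE}})$ and hence $(A\oplus M)^{l\oE}\cong A^{l\oE}\oplus(M\ten_{A_0}A_0^{l\oE})$. When $A\cong A^{l\oE}$, i.e. $A_0\cong(A_0/\H_0A)^{\loc_{\oE}}$, the right-hand side is just $A\oplus M$, so $G$ lands in the $\oE$-local subcategory; preservation of fibrations and trivial fibrations then holds exactly as in the standard case, completing the Quillen adjunction. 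I expect this square-zero invariance of $\oE$-localisation to be the main obstacle: the formal \'etaleness of $\oE$-morphisms must be extracted from flatness together with the vanishing in Lemma \ref{Cunramlemma}, and one must verify both existence and uniqueness of the infinitesimal lifts within the Fermat-theory setting rather than borrowing the classical \'etale-site invariance wholesale.
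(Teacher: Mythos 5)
Your adjunction argument and the standard-model-structure case are correct and match what the paper treats as immediate, and your reduction of the $\oE$-case to the statement that $A\cong A^{l\oE}$ forces $A\oplus M\cong (A\oplus M)^{l\oE}$ is exactly the paper's reduction. The gap is the sentence asserting that $\oE$-morphisms, being flat and unramified, are formally \'etale. Unramifiedness (Lemma \ref{Cunramlemma}, i.e.\ $\Omega^1_{B/R}=0$) gives only \emph{uniqueness} of infinitesimal lifts: two lifts into a square-zero extension differ by a $\oT$-derivation into the square-zero ideal, and all such derivations vanish. It gives nothing towards \emph{existence} of lifts, which is the half you actually need for your square-zero invariance of $\oE$-localisation. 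The classical implication ``flat $+$ unramified $\Rightarrow$ formally \'etale'' requires finite presentation and the Zariski-local structure theory of \'etale ring maps (equivalently, vanishing of the full cotangent complex, not just of $\Omega^1$); none of this is available for a general Fermat theory --- a finitely presented $\oT$-algebra is typically not a finitely presented ring, so the classical theorem cannot be imported --- and neither Assumption \ref{openetale} nor Lemma \ref{Cunramlemma} supplies it. Your closing paragraph flags exactly this as ``the main obstacle'', but the proposal never overcomes it, so the central step is missing.

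To see concretely what is needed, run the unique-lifting problem for $A_0\oplus M_0\to A_0\oplus \H_0M$ against $R\to B$ in $\oE$ through the adjunction: it becomes the problem of factoring a $B$-module map $\psi\co \Omega^1_R\ten_RB\to M_0$ (encoding the given derivation on $R$) through $\pi\co \Omega^1_R\ten_RB\to \Omega^1_B$, compatibly with a given map $\Omega^1_B\to\H_0M$. Surjectivity of $\pi$ --- which is all Lemma \ref{Cunramlemma} states --- gives uniqueness of the factorisation; existence forces $\psi(\ker\pi)=0$, so one needs $\pi$ to be injective as well (taking $A=B$ concentrated in degree $0$, which is automatically $\oE$-local, and $M$ the two-term complex $\ker\pi\into \Omega^1_R\ten_RB$ placed in degrees $1$ and $0$, shows this is not vacuous). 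That bijectivity of $\pi$ is the actual content hidden in your appeal to formal \'etaleness, and it is what must be verified; in the paper's main examples it holds because $\Omega^1$ of the EFC-algebra of a globally finitely presented Stein space is $\Gamma(X,\Omega^1_X)$ and open immersions and local biholomorphisms induce isomorphisms on cotangent sheaves. For comparison, the paper's own proof never invokes a general square-zero invariance of $\oE$-morphisms: it factors the map as $A_0\oplus M_0\to A_0\oplus\H_0M\to \H_0A\oplus\H_0M$, handles the second map as a pullback of $A_0\to\H_0A$ (which has the lifting property because $A\cong A^{l\oE}$), and then reduces the first map ``by adjunction'' to precisely the module-level statement above. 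So your route both requires strictly more than the paper's (a base-change description of all $\oE$-morphisms out of $A_0\oplus M_0$) and justifies it less.
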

\begin{proof}
 For the standard model structure, this is immediate. For the $\oE$-model structure, we need to know that for any non-negatively graded DGA $A$ with $A= A^{l\oE}$, and for any $A$-module $M$ in non-negatively graded chain complexes, we have $(A\oplus M)^{l\oE}\cong A\oplus M$. This amounts to showing that the morphism $A_0 \oplus M_0 \to \H_0A \oplus \H_0M$ has the right lifting property with respect to $\oE$-morphisms. By pullback, we know that the morphism $A_0 \oplus \H_0M \to \H_0A \oplus \H_0M$ has this property, so it suffices to show that    $A_0 \oplus M_0 \to A_0 \oplus \H_0M$ has the property. By adjunction, this follows from Lemma \ref{Cunramlemma}.
\end{proof}

\begin{definition}
 Denote the left-derived functor of $A \mapsto (A, \Omega^1_A)$ by $A \mapsto (A, \oL\Omega^1_A)$. We refer to $\oL\Omega^1_A$ as the cotangent complex of $A$. Given a morphism $R \to A$ of $\oT$-DGAs, write $\oL\Omega^1_{A/R}$ for the cone of the natural map $ \oL\Omega^1_R\ten_RA \to  \oL\Omega^1_A$. 
\end{definition}

\begin{lemma}\label{cotalglemma}
Given a morphism $R \to A$ of non-negatively graded  $\oT$-DGAs with $\H_0R \onto \H_0A$, the relative cotangent complex $\oL\Omega^1_{A/R}$ is quasi-isomorphic to the algebraic cotangent complex $\oL\Omega^{1,\alg}_{A/R}$ of the underlying CDGAs.
\end{lemma}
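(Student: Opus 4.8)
The plan is to compute both relative cotangent complexes from a \emph{single} cofibrant resolution of $A$ over $R$, and then to check that the two notions of relative differentials agree on that resolution. The hypothesis $\H_0R \onto \H_0A$ is exactly what makes this possible: it lets us resolve $A$ without ever adjoining a generator in chain degree $0$, and since the $\oT$-structure of an $\oT$-DGA lives only on $\z_0$, omitting degree-$0$ generators is precisely what erases the discrepancy between $\oT$-derivations and ordinary graded derivations.

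First I would build a factorisation $R \to \tilde A \xra{\sim} A$ in which $R \to \tilde A$ is obtained from $R$ by freely adjoining generators in strictly positive chain degrees only. Because $\H_0R \onto \H_0A$, no degree-$0$ generators are needed to surject onto $\H_0A$; one then kills the kernel of $\H_0R \to \H_0A$ by degree-$1$ generators, removes the resulting spurious classes in $\H_1$ by degree-$2$ generators, and so on, adjoining in addition positive-degree generators to hit $\H_nA$ for $n \ge 1$. This is the usual Tate--Koszul construction, carried out entirely in positive degrees, so $\tilde A_0 = R_0$ throughout and $\z_0\tilde A = \z_0 R = R_0$ with its original $\oT$-structure (there being no elements of negative degree). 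Each elementary step is a pushout of a generating cofibration in strictly positive degree for the standard model structure (Proposition \ref{stdmodelprop}), so $R \to \tilde A$ is a cofibration of $\oT$-DGAs. The point I would verify directly is that adjoining a positive-degree generator never touches $R_0$ or its $\oT$-structure, so the \emph{underlying} CDGA of each such free $\oT$-DGA extension coincides with the corresponding free CDGA extension; hence $R \to \tilde A$ is simultaneously a cofibration of CDGAs.

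With $\tilde A$ in hand, the standard transitivity triangle for derived differentials --- a formal consequence of the Quillen adjunction of Lemma \ref{cotlemma} --- identifies $\oL\Omega^1_{A/R} \simeq \Omega^1_{\tilde A/R}\ten_{\tilde A} A$ (the cone defining $\oL\Omega^1_{A/R}$) and, for the underlying CDGAs, $\oL\Omega^{1,\alg}_{A/R} \simeq \Omega^{1,\alg}_{\tilde A/R}\ten_{\tilde A} A$. It then remains to produce an isomorphism $\Omega^1_{\tilde A/R}\cong \Omega^{1,\alg}_{\tilde A/R}$ of $\tilde A$-modules. Writing $\{x_i\}$ for the positive-degree generators, the algebraic relative module $\Omega^{1,\alg}_{\tilde A/R}$ is free on $\{dx_i\}$, as for any free graded-commutative extension. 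For the $\oT$-version, a relative $\oT$-derivation $\delta \co \tilde A \to M$ (one vanishing on $R$) is determined by its values $\delta(x_i)$; the only constraint beyond the graded Leibniz rule is the $\oT$-derivation condition of Definition \ref{oplusdef}, which only involves $\z_0\tilde A = R_0$, where $\delta$ vanishes, and is therefore vacuous. Thus relative $\oT$-derivations and relative graded derivations out of $\tilde A$ coincide, giving $\Omega^1_{\tilde A/R}\cong \Omega^{1,\alg}_{\tilde A/R}$ (both free on $\{dx_i\}$), and tensoring to $A$ yields the desired quasi-isomorphism.

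I expect the main obstacle to be the construction in the second paragraph: arranging a resolution with no degree-$0$ cells (which is exactly where $\H_0R \onto \H_0A$ is used) and, more technically, showing that a free $\oT$-DGA extension by positive-degree generators has the same underlying CDGA as the free CDGA extension, so that the one object $\tilde A$ is cofibrant in both categories at once. The general, non-finitely-generated case should then follow by passing to filtered colimits, since both $\Omega^1$ and $\Omega^{1,\alg}$ commute with the relevant colimits and quasi-isomorphisms are stable under them.
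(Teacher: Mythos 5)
Your proposal is correct and follows essentially the same route as the paper: both rest on the observations that $\oT$-derivations differ from algebraic graded derivations only through the degree-$0$ part, and that the hypothesis $\H_0R \onto \H_0A$ lets one choose resolutions whose degree-$0$ parts coincide, so that the relative cotangent complex never sees the $\oT$-structure. The only cosmetic difference is that you build a single relative cell resolution $R \to \tilde{A} \xra{\sim} A$ with cells in strictly positive degrees (so $\tilde{A}_0 = R_0$), whereas the paper takes compatible absolute cofibrant replacements $\tilde{R}\to R$ and $\tilde{A}\to A$ with $\tilde{R}_0 \cong \tilde{A}_0$ and compares both cotangent complexes through the common degree-$0$ algebra.
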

\begin{proof}
 Since the definition of $\oT$-derivations is purely algebraic in non-zero degrees, we automatically have $\Omega^{1}_{A/A_0} \simeq\Omega^{1,\alg}_{A/A_0}$. Moreover, for a cofibrant $\oT$-DGA $A$, the morphism $A_0 \to A$ is a cofibration of CDGAs, so in general we have $\oL\Omega^{1}_{A/A_0} \simeq\oL\Omega^{1,\alg}_{A/A_0}$  We then just observe that since $\H_0R \onto \H_0A$, we can take compatible cofibrant replacements $\tilde{R} \to R$ and $\tilde{A} \to A$ with $\tilde{R}_0 \cong \tilde{A}_0$, and the result follows. 
\end{proof}

The following is now an immediate consequence of cofibrancy in the $\oE$-model structure of Proposition \ref{locmodelprop}:
\begin{lemma}\label{cotcoflemma}
 Given an $\oE$-morphism $\oT[x_1, \ldots, x_n] \to A$ of $\oT$-algebras,
 the cotangent complex $\oL\Omega^1_{A}$ is quasi-isomorphic to $\Omega^1_A$.
\end{lemma}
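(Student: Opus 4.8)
The plan is to read the result straight off the cofibrancy statement in Proposition \ref{locmodelprop}, using that $\oL\Omega^1$ is by construction a left-derived functor and that a left-derived functor evaluated on a cofibrant object requires no cofibrant replacement.

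First I would observe that the hypothesis supplies an $\oE$-morphism, hence an $\ind(\oE)$-morphism, $\oT[x_1, \ldots, x_n] \to A$, so the final assertion of Proposition \ref{locmodelprop} shows that $A$, regarded as a $\oT$-DGA concentrated in degree $0$, is cofibrant in the $\oE$-model structure. For this to make sense one checks that $A$ is genuinely an object of that model category, i.e.\ that $A \cong A^{l\oE}$: since $A$ sits in degree $0$, the structure map $A_0 \to \H_0A$ is the identity, and the tautological factorisation $A \xra{\id} A \xra{\id} A$ (with $\id$ lying in both classes of the factorisation system) realises the unique factorisation of Lemma \ref{UFSlemma}, whence $(A/A)^{\loc_{\oE}} \cong A$ and therefore $A^{l\oE} \cong A$.

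Next I would invoke Lemma \ref{cotlemma}, which records that $A \mapsto (A, \Omega^1_A)$ is a left Quillen functor for the $\oE$-model structure. Its left-derived functor is thus computed by applying the underived functor to a cofibrant replacement of $A$ taken in the $\oE$-model structure; since $A$ is already cofibrant there, the identity serves as such a replacement, and we obtain $\oL\Omega^1_A \simeq \Omega^1_A$ by reading off the module component of the value $(A, \Omega^1_A)$.

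The only point needing care is that $\oL\Omega^1$ was introduced via the standard model structure, while the cofibrancy just exploited lives in the $\oE$-model structure. I expect this compatibility across the two structures to be the sole genuine subtlety, and it is resolved formally: both model structures have the quasi-isomorphisms as their weak equivalences, Proposition \ref{locmodelprop} exhibits the inclusion as a Quillen equivalence, and $A \mapsto (A, \Omega^1_A)$ is simultaneously left Quillen for both by Lemma \ref{cotlemma}. Hence the two left-derived functors agree up to quasi-isomorphism, and computing the derived value in the $\oE$-model structure is legitimate. Everything else is then immediate.
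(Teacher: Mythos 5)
Your proof is correct and is essentially the paper's own argument: the paper disposes of this lemma in one line, as an ``immediate consequence'' of the cofibrancy statement in Proposition \ref{locmodelprop} combined with the Quillen adjunction of Lemma \ref{cotlemma}, exactly the route you take. The extra verifications you supply --- that $A \cong A^{l\oE}$ (so $A$ genuinely lies in the $\oE$-model category), and that the left-derived functor may legitimately be computed in the $\oE$-model structure rather than the standard one (via the shared weak equivalences, the Quillen equivalence of Proposition \ref{locmodelprop}, and the fact that the right adjoint $(A,M)\mapsto A\oplus M$ of Lemma \ref{cotlemma} preserves all weak equivalences) --- are precisely the details the paper leaves implicit, and both are sound.
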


\begin{example}\label{cotSteinlemma}
  Given a Stein submanifold $X \subset \Cx^n$, Lemma \ref{cotcoflemma} implies that the cotangent complex $\oL\Omega^1_{\sO(X)}$ of the EFC-algebra $\sO(X)$ of holomorphic functions on $X$ is  modelled by $ \Omega^1_{\sO(X)}$. This is isomorphic to the module $\Gamma(X, \Omega^1_X)$ of global sections  of the sheaf of holomorphic differentials on $X$, because it  follows from Proposition \ref{fpalgprop} and Lemma \ref{fpmodlemma}, that they represent the same functor on Stein modules. 

In fact, the same description holds when $X$ is any Stein manifold, since we may then choose local biholomorphisms $X \xla{\pi} \tilde{X} \to \Cx^n$, with $\pi$ surjective, and  apply Lemma \ref{Cunramlemma} to give $ \oL\Omega^1_{\sO(X)}\ten^{\oL}_{\sO(X)}\sO(\tilde{X}) \simeq \Omega^1_{\sO(X)}\ten_{\sO(X)}\sO(\tilde{X})$, meaning the natural map $\oL\Omega^1_{\sO(X)} \to \Omega^1_{\sO(X)} $ must be a quasi-isomorphism.

Using Proposition \ref{fpalgprop2} and Lemma \ref{fpmodlemma2}, similar results hold for smooth non-Archimedean Stein subspaces $X \subset \bA_K^n$, and indeed for all smooth non-Archimedean Stein spaces.
\end{example}

\subsection{Comparison with structured topoi}

In Lurie's formulation \cite{lurieDAG5,lurieDAG9} of derived analytic geometry, further developed by Porta and Yu \cite{PortaGAGA1,PortaYuNonArch,PortaYuRep}, structure sheaves are required to carry far more data than we have in our setup. This extra data takes the form of spaces of $U$-valued functions for all Stein spaces (or affinoids in the rigid analytic setting). For the complex analytic and overconvergent non-Archimedean settings, we now show that for derived analytic spaces, and even for derived Artin analytic stacks, this extra data can be deduced just from the functions to the affine line, together with their entire functional calculus.

We now let $K$ be either $\Cx$ or a non-Archimedean field. 
\begin{definition}\label{ROdef}
 Given a derived $K$-analytic space $X=(\fX,\bO_X)$ in the sense of \cite[Definition 12.3]{lurieDAG9} or \cite[Definition 2.5]{PortaYuNonArch}, define the non-negatively graded EFC-DGA $\oR\sO(X)$ as follows. The full and faithful embedding of analytic affine spaces in the $\infty$-category of derived stacks gives a product-preserving $\infty$-functor $\bA^n_K \mapsto \map(X,\bA^n_K)$, where $\map(X,Y)$ is the mapping space, regarded as a simplicial set. By \cite[5.5.9.3]{luriehighertopoi}, this can be represented by a simplicial-set valued  functor which preserves products on the nose; in other words, we may regard $\map(X,\bA^1_K)$ as a simplicial EFC-algebra, and we then set 
\[
 \oR\sO(X):= N\map(X,\bA^1_K)
\]
to be the simplicial Dold--Kan normalisation with Eilenberg--Zilber shuffle product as in Proposition \ref{stdmodelprop}.

We also define a hypersheaf $\oR\sO_X$ of non-negatively graded EFC-DGAs on the topos $\fX$ by $(\oR\sO_X)(V):= \oR\sO(V, \bO_X|_V)$.
\end{definition}

\begin{proposition}\label{compcotprop}
 Given a derived $K$-analytic spaces $X$ in the sense of \cite[Definition 12.3]{lurieDAG9} or \cite[Definition 2.5]{PortaYuNonArch} such that the underived truncation $t_0X$ is equivalent to   a  finitely embeddable Stein space, the cotangent complex of \cite[\S 5.2]{PortaYuRep} is given by
\[
 \bL_X^{\an} \simeq \oL\Omega^1_{\oR\sO(X)}\ten^{\oL}_{\oR\sO(X)}\sO_X^{\alg}.
\]
\end{proposition}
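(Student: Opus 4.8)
The plan is to compare universal properties. The right-hand side $\oL\Omega^1_{\oR\sO(X)}$ corepresents EFC-derivations of $\oR\sO(X)$ by Definition~\ref{Omegadef}, so after the base change $\ten^{\oL}_{\oR\sO(X)}\sO_X^{\alg}$ it corepresents the functor sending an $\sO_X^{\alg}$-module $M$ to the space of EFC-derivations of $\oR\sO(X)$ valued in $R\Gamma(\fX,M)$. On the other hand, by \cite[\S5.2]{PortaYuRep} (following \cite{lurieDAG9}) the analytic cotangent complex $\bL_X^{\an}$ corepresents the functor of analytic derivations of $\sO_X$ valued in $M$, equivalently the space of analytic square-zero extensions of $X$ by $M$. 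So I would first reduce the proposition to the assertion that these two derivation functors agree, naturally in $M$.

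The heart of that identification is that the analytic structure carried by the trivial square-zero extension $\sO_X\oplus M$ in the structured-topos formalism is \emph{exactly} the EFC-structure of Definition~\ref{oplusdef} and Definition~\ref{dgoplusdef}, i.e. the first-order Taylor expansion
\[
\Phi_f(a+m)=\Phi_f(a)+\sum_{i}\Phi_{\frac{\pd f}{\pd x_i}}(a)\,m_i .
\]
I would verify this by computing the $\bA^n_K$-points of the square-zero extension: differentiating the analytic operations in the $M$-direction produces precisely this chain-rule formula, so an analytic derivation is the same datum as an EFC-derivation, namely one satisfying $\delta\Phi_f(a_1,\ldots,a_n)=\sum_i\Phi_{\frac{\pd f}{\pd x_i}}(a_1,\ldots,a_n)\,\delta a_i$. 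Under Definition~\ref{ROdef} the product-preserving functor $\bA^n_K\mapsto\map(X,\bA^n_K)$ defining $\oR\sO(X)$ is exactly what records this data, so an analytic derivation of $X$ is encoded by a map of (normalised) simplicial EFC-algebras lifting the identity, which is an EFC-derivation of $\oR\sO(X)$.

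To make this rigorous and to exploit the finitely embeddable hypothesis, I would use a closed embedding $t_0X\hookrightarrow\bA^n_K$, which lifts to a map $X\to\bA^n_K$ and hence to a morphism $\oT[x_1,\ldots,x_n]\to\oR\sO(X)$ that is surjective on $\H_0$. On the building block $\bA^n_K$ both sides match directly: $\oL\Omega^1_{\oT[x_1,\ldots,x_n]}\simeq\Omega^1\cong\oT[x_1,\ldots,x_n]^{\oplus n}$ by Lemma~\ref{cotcoflemma}, while $\bL^{\an}_{\bA^n_K}\cong\sO_{\bA^n_K}^{\oplus n}$ since $\bA^n_K$ is smooth, and Lemma~\ref{mapSteinlemma} identifies the relevant mapping spaces. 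For the relative term, the cotangent complex of $X$ relative to this embedding is purely algebraic: on the EFC side this is Lemma~\ref{cotalglemma} (applicable since $\H_0\oT[x_1,\ldots,x_n]\onto\H_0\oR\sO(X)$), and the analogous statement for $\bL^{\an}$ relative to a closed immersion with surjection on $\pi_0$ holds in \cite{PortaYuRep}. These relative algebraic cotangent complexes coincide because $\oR\sO(X)$ and $\sO_X^{\alg}$ share the same underlying CDGA after the base change. Assembling the transitivity triangles for $\oT[x_1,\ldots,x_n]\to\oR\sO(X)$ and its analytic counterpart, and applying $\ten^{\oL}_{\oR\sO(X)}\sO_X^{\alg}$, then gives the claimed equivalence.

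I expect the main obstacle to be the rigorous matching of Porta--Yu's definition of $\bL_X^{\an}$ with the EFC-derivation functor, that is, showing that the analytic structure on the square-zero extension arising from the $\mathcal{T}_{\an}$-structured pregeometry is literally the first-order Taylor EFC-structure of Definition~\ref{oplusdef}. Unwinding this demands care with the $\infty$-categorical definition of square-zero extensions and with the passage between global $\oR\sO(X)$-modules and $\sO_X^{\alg}$-module sheaves; the base change $\ten^{\oL}_{\oR\sO(X)}\sO_X^{\alg}$ is precisely what absorbs the sheafification, and the finitely embeddable hypothesis is what legitimises the reduction to $\bA^n_K$ while keeping the representing objects compact.
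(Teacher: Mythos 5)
Your proposal is correct and follows essentially the same route as the paper's proof: a comparison map obtained by applying $\oR\sO$ to analytic split square-zero extensions (equivalently, matching the derivation functors via the chain-rule EFC structure of Definition \ref{oplusdef}), verification on $\bA^n_K$ using cofibrancy of $\oT[x_1,\ldots,x_n]$, and then reduction via the closed immersion $X\to\bA^n_K$ supplied by the finitely embeddable hypothesis, using Lemma \ref{cotalglemma} together with \cite[Corollary 5.32]{PortaYuRep} to identify both relative cotangent complexes with the algebraic one. The only detail you leave implicit that the paper makes explicit is why the embedding of $t_0X$ lifts to $X$, namely $\pi_0\map(X,\bA^1_K)\cong\Hom(t_0X,\bA^1_K)$.
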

\begin{proof}
Applying the functor $\oR\sO$ to an analytic split square-zero extension as in \cite[Definition 5.14]{PortaYuRep}  gives rise to a split square-zero extensions of EFC-DGAs as encountered in Definition \ref{oplusdef}. By adjunction, this gives us a natural map 
\[
\oL\Omega^1_{\oR\sO(X)}\ten^{\oL}_{\oR\sO(X)}\sO_X^{\alg} \to \bL_X^{\an},
\]
and we wish to show that this is an equivalence.

For affine spaces, we have that $\sO(\bA^n_K)=\oT[x_1, \ldots, x_n]$ is cofibrant, and hence $\oL\Omega^1_{\oR\sO(\bA^n_K)}\simeq \Omega^1_{\sO(\bA^n_K)}\cong \sO(\bA^n_K)^n$, giving the required isomorphism in this case.
In general, since $\pi_0\map(X,\bA^1_K) \cong \H_0\oR\sO(X) \cong \Hom(t_0X,\bA^1_K)$ and $t_0X$ is finitely embeddable, there exists a closed immersion $X \to \bA^n_K$ for some $n$.  The general result then follows from  Lemma \ref{cotalglemma} and  \cite[Corollary 5.32]{PortaYuRep}, which show that both relative  $\bL^{\an}$ and $\oL\Omega^1_{\oR\sO}$ are given by algebraic cotangent complexes on closed immersions.  
\end{proof}

\begin{proposition}\label{DHomprop}
 If $Y$ and $X$ are  derived $K$-analytic spaces in the sense of \cite[Definition 12.3]{lurieDAG9} or \cite[Definition 2.5]{PortaYuNonArch} such that the underived truncation $t_0Y$ (resp. $t_0X$) is equivalent to the structured topos associated to an analytic space (resp.  a  finitely embeddable Stein space), then the natural map
\[
 \map_{\mathrm{dAn}_K}(Y, X) \to \map_{EFC-DGA_K}(\oR\sO(X), \oR\sO(Y))
\]
of mapping spaces (coming from contravariant functoriality of $\oR\sO$) is a weak equivalence of simplicial sets.
\end{proposition}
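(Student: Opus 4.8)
The plan is to fix $X$, regard both sides as functors of the variable $Y$, show each is a hypersheaf for the analytic topology, and then reduce by Postnikov induction to the underived case, where the cotangent complex comparison of Proposition \ref{compcotprop} supplies the inductive step. First I would observe that $Y \mapsto \map_{\mathrm{dAn}_K}(Y,X)$ is a hypersheaf because it is representable, while $Y \mapsto \map_{EFC-DGA_K}(\oR\sO(X),\oR\sO(Y))$ is a hypersheaf because $\oR\sO(Y) = N\map(Y,\bA^1_K)$ is built from the structure hypersheaf $\bO_Y$, so $\oR\sO$ carries an analytic hypercover of $Y$ to a limit of EFC-DGAs and $\map_{EFC-DGA_K}(\oR\sO(X),-)$ preserves limits. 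The comparison map is therefore a natural transformation of hypersheaves in $Y$, and it suffices to test it on a basis. Using the open covers by globally finitely generated Stein spaces from Remark \ref{embeddingrmk} (resp. Remark \ref{embeddingrmk2}), I may assume $Y$ is a derived analytic space whose truncation $t_0Y$ is a finitely embeddable Stein space.

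Next I would run the Postnikov tower of $\bO_Y$. Since derived $K$-analytic spaces are nilcomplete and infinitesimally cohesive (see \cite{lurieDAG9,PortaYuNonArch}), mapping into $X$ commutes with the Postnikov limit, giving $\map_{\mathrm{dAn}_K}(Y,X) \simeq \Lim_n \map_{\mathrm{dAn}_K}(t_{\le n}Y,X)$; on the algebra side, connective EFC-DGAs are Postnikov complete, $\tau_{\le n}\oR\sO(Y) \simeq \oR\sO(t_{\le n}Y)$, and $\map_{EFC-DGA_K}(\oR\sO(X),-)$ preserves the limit, so $\map_{EFC-DGA_K}(\oR\sO(X),\oR\sO(Y)) \simeq \Lim_n \map_{EFC-DGA_K}(\oR\sO(X),\tau_{\le n}\oR\sO(Y))$. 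Thus it is enough to treat each $n$-truncated $t_{\le n}Y$ and induct on $n$. The base case $n=0$ is exactly Lemma \ref{Homlemma} in the complex setting (resp. Lemma \ref{HomlemmaNonarch} in the non-Archimedean setting): both mapping spaces are $0$-truncated and the map is the bijection $\Hom(t_0Y,X) \cong \Hom_{EFC}(\sO(X),\sO(t_0Y))$.

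For the inductive step, the transition $t_{\le n}Y \to t_{\le n-1}Y$ is a square-zero extension by the shifted module $M := (\pi_n\bO_Y)[n]$. On the analytic side, deformation theory for $\bL^{\an}_X$ (\cite[\S 5.2]{PortaYuRep}) presents $\map_{\mathrm{dAn}_K}(t_{\le n}Y,X)$ as a pullback of $\map_{\mathrm{dAn}_K}(t_{\le n-1}Y,X)$ along the derivation classifying the extension, with fibre over $g$ a torsor under $\map(g^{*}\bL^{\an}_X, M)$. Applying $\oR\sO$ to the corresponding analytic split square-zero extension, as in the proof of Proposition \ref{compcotprop}, yields a split square-zero extension of EFC-DGAs of the form in Definition \ref{oplusdef}, whose associated deformation theory (Lemma \ref{cotlemma} together with Lemma \ref{cotalglemma}) presents the algebraic mapping space as the analogous pullback with fibres torsors under $\map(\oL\Omega^1_{\oR\sO(X)}\ten \oR\sO(t_{\le n-1}Y), M)$. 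Proposition \ref{compcotprop} identifies $\bL^{\an}_X \simeq \oL\Omega^1_{\oR\sO(X)}\ten^{\oL}_{\oR\sO(X)}\sO_X^{\alg}$, so these fibres agree; combining this with the inductive hypothesis over $t_{\le n-1}Y$ and comparing the two fibre sequences gives the equivalence at level $n$.

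The main obstacle lies in this last step: one must produce, naturally in $Y$, compatible pullback presentations of both mapping spaces and then verify that the comparison map intertwines them. Concretely, the work is in checking that $\oR\sO$ carries an analytic square-zero extension to the EFC-DGA square-zero extension governed by the same module, and that the deformation theory of the analytic cotangent complex of \cite{PortaYuRep} is compatible, under $\oR\sO$ and Proposition \ref{compcotprop}, with the $\oT$-derivation deformation theory of Lemma \ref{cotlemma}. The subsidiary identifications $\tau_{\le n}\oR\sO(Y) \simeq \oR\sO(t_{\le n}Y)$ and the hypersheaf property of $\oR\sO$ are routine but also need to be recorded.
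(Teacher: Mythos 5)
Your proposal is correct and follows essentially the same route as the paper's own proof: Postnikov induction on $\bO_Y$, with base case given by Lemmas \ref{Homlemma}/\ref{HomlemmaNonarch} (together with full faithfulness of the embedding of analytic spaces into derived analytic spaces, which you use implicitly), and inductive step comparing the fibration sequence arising from the analytic square-zero extensions of \cite[Corollary 5.42]{PortaYuRep} with the one obtained by applying $\oR\sO$ to them, the fibres being identified via Proposition \ref{compcotprop}. The only substantive difference is your preliminary hypersheaf reduction to the case where $t_0Y$ is Stein, which the paper does not need (Lemma \ref{Homlemma} and the fibre-sequence comparison hold for arbitrary analytic $t_0Y$); note, however, that this reduction is what makes your auxiliary claim $\tau_{\le n}\oR\sO(Y)\simeq\oR\sO(t_{\le n}Y)$ valid, since for non-Stein $t_0Y$ higher coherent cohomology of the sheaves $\pi_i\bO_Y$ contributes to $\pi_*\oR\sO(Y)$ and the claim fails --- the paper sidesteps this by never identifying $\oR\sO(t_{\le n}Y)$ with a truncation of $\oR\sO(Y)$, instead using only $\oR\sO(Y)\simeq \ho\Lim_n \oR\sO(t_{\le n}Y)$ at the final step.
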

\begin{proof}
 We prove this by induction on the Postnikov tower of the structure sheaf $\bO_Y$ of $Y$. When $\bO_Y \simeq \pi_0\bO_Y$, we have $Y \simeq t_0Y$, so the statement follows from Lemmas \ref{Homlemma} and \ref{Homlemmacoprod}, together with full faithfulness (\cite[Theorem 12.8]{lurieDAG9} and \cite[Theorem 4.11]{PortaYuNonArch}) of the functor from analytic spaces to derived analytic spaces.

By \cite[Corollary 5.42]{PortaYuRep}, the map $t_{\le n} Y \to t_{\le n+1}Y$ on truncations is an analytic square-zero extension in the sense of \cite[Definition 5.39]{PortaYuRep}. This gives us a morphism
\[
 \eta_d\co \map_{\mathrm{dAn}_K}(t_{\le n}Y  , X) \to \map_{\mathrm{dAn}_K}((Y, \tau_{\le n}\bO_{Y} \oplus \pi_{n+1}\bO_Y^{\alg}[n+1])  , X)
\]
(notation as in  \cite[Definition 5.14]{PortaYuRep}), whose homotopy equaliser with the zero section $\eta_0$ is just $\map_{\mathrm{dAn}_K}(t_{\le n+1}Y  , X) $. 

For each homotopy class $[f] \in \pi_0\map_{\mathrm{dAn}_K}(t_{0}Y  , X)$, this gives us a homotopy fibration sequence
\[
 \map_{\mathrm{dAn}_K}(t_{\le n+1}Y, X)_{[f]} \to \map_{\mathrm{dAn}_K}(t_{\le n}Y, X)_{[f]} \to \map_{\Mod(\bO_X^{\alg})}(\bL_X, \oR f_*\pi_{n+1}\bO_Y^{\alg}[n+1]). 
\]
 Proposition \ref{compcotprop} allows us to rewrite the final space as
\begin{align*}
 &\map_{\Mod(\oR(\sO(X)))}(\oL\Omega^1_{\oR\sO(X)}, \oR\Gamma(X,\oR f_*\pi_{n+1}\bO_Y^{\alg}[n+1]))\\
& \simeq  \map_{\Mod(\oR(\sO(X)))}(\oL\Omega^1_{\oR\sO(X)}, \oR\Gamma(Y,\pi_{n+1}\bO_Y^{\alg})[n+1]), 
\end{align*}

where the $\oR\sO(X)$-module structure on the right comes from the morphism $f^* \co \oR\sO(X) \to \H_0\oR\sO(Y)$. 

Now, applying the functor $\oR\sO(-)$ to the analytic square-zero extension of \cite[Corollary 5.42]{PortaYuRep}, we realise the EFC-DGA $\oR\sO(t_{\le n+1}Y)$ as 
the homotopy fibre of the EFC derivation
\[
 \oR\sO(t_{\le n}Y) \to   \tau_{\ge 0}(\oR\Gamma(Y,\pi_{n+1}\bO_Y^{\alg})[n+1])
\]
induced by $\eta_d$. For $[f] \in \pi_0\map_{\mathrm{dAn}_K}(t_{0}Y  , X)$, we thus realise have  $\map_{EFC-DGA_K}(\oR\sO(X),\oR\sO(t_{\le n+1}Y))_{[f]}$ as  
the homotopy fibre of 
\[
\map_{\mathrm{dAn}_K}(\oR\sO(X),\oR\sO(t_{\le n}Y))_{[f]} \to \map_{\Map(\oR\sO(X))}(\oL\Omega^1_{\oR\sO(X)},\oR\Gamma(Y,\pi_{n+1}\bO_Y^{\alg})[n+1] ). 
\]
Comparing the respective sequences now gives the required inductive step, giving equivalences for each $t_{\le n}Y$. The result then follows because $Y\simeq \ho\LLim_n t_{\le n}Y$, with $\oR\sO(Y)\simeq \ho\Lim_n \oR\sO(t_{\le n}Y)$.
\end{proof}

\begin{remark}
In particular, Proposition \ref{DHomprop} exhibits the $\infty$-category of Lurie's derived Stein spaces as a full subcategory of the $\infty$-category of EFC-DGAs, for both complex and non-Archimedean contexts. The key to the comparison is in comparing the respective cotangent complexes associated to closed immersions, and this will have analogues in any Fermat theory, because \cite[Remark 11.13]{lurieDAG9} amounts to saying  that the results of \cite[\S 11]{lurieDAG9} hold in that generality. 
\end{remark}

\section{DG analytic spaces and stacks}

Our results so far show that finitely presented EFC-DGAs give a natural derived extension of the category of the category of globally finitely presented Stein spaces. The idea now is just to glue these together to give DG analytic spaces and stacks. In the complex setting, this gives derived enhancements of any analytic space, because $\Cx$-analytic spaces admit an open cover by Stein spaces (open polydiscs and their closed subspaces being Stein), corresponding to localisations of EFC-algebras. 

In the non-Archimedean setting, we have to be more careful because admissible opens in the rigid analytic context do not correspond to  localisations (in the sense of Definition \ref{finlocKdef})  of EFC-algebras on their rings of convergent functions, since they  cannot necessarily be factorised in terms of open immersions of Stein spaces.   By \cite[Theorem 2.27 and proof of Theorem 2.26]{GrosseKloenne}, any partially proper rigid analytic space has a basis of open Stein subspaces, but we can do better via dagger spaces. Admissible open immersions do give rise to localisations on rings of overconvergent functions, essentially because open polydiscs are Stein.


\subsection{Analytic spaces, Deligne--Mumford and Artin analytic stacks}\label{stacksn}

By taking the opposite model category $DG^+\Aff_{\oT} $ (or a pseudo-model subcategory)  to non-negatively graded $\oT$-DGAs as our analogue of affine schemes, we can now develop the theory of derived stacks (by analogy with \cite{hag2,lurie}) with respect to any precanonical Grothendieck topology, and a suitable class $\oC$ of morphisms, for any rational Fermat theory $\oT$. In the algebraic setting, the most common classes of morphisms to consider are \'etale surjections (giving rise to Deligne--Mumford stacks) and smooth surjections (giving rise to Artin stacks). The general properties such a class of morphisms must satisfy are summarised in \cite[Properties \ref{stacks2-Cprops}]{stacks2}  or \cite[Definition 2.10]{PortaYuGAGA}, giving rise to the theory of $n$-geometric stacks as certain simplicial presheaves on non-negatively graded $\oT$-DGAs (beware of discrepancies in the value of $n$ between references, stemming from different versions of \cite{hag2}). 

By analogy with the strongly quasi-compact $n$-geometric stacks of \cite{hag2}, the most natural objects to develop are built from objects of $DG^+\Aff_{\oT} $  rather than arbitrary disjoint unions. Since countable disjoint unions of Stein spaces are Stein, this is not much of a restriction in the cases we are interested in;
  quasi-compactness in these analytic settings is relative to a Grothendieck topology generated by finite   covering families of globally finitely presented Stein spaces.  
Such  strongly quasi-compact $n$-geometric stacks are characterised in \cite[Theorem \ref{stacks2-relstrict}]{stacks2}: they admit representations by $(n,\oC)$-hypergroupoids, i.e. simplicial diagrams in $DG^+\Aff_{\oT} $ satisfying a form of Kan condition, namely that the partial matching maps are all $\oC$-morphisms, and are eventually equivalences. 

For a class $\oE$ of morphisms as in Assumption \ref{openetale}, we can take $\oC_{\oE,\mathrm{DM}}$ to be the class generated under pushout, composition and retraction by weak equivalences and faithfully flat $\oE$-morphisms. This consists of morphisms $A \to B$ for which $\H_0A \to \H_0B$ is in $\oE$ and $\H_*B\cong \H_*A\ten_{\H_0A}\H_0B$, and gives rise to analogues of derived Deligne--Mumford stacks. We can also consider an analogue of derived Artin stacks by taking $\oC_{\oE,\mathrm{Artin}}$ to be the class generated by  weak equivalences, faithfully flat $\oE$-morphisms, and the map $\Q_{\oT} \to \Q[x]_{\oT}$, provided all pushouts of  the latter are flat. The argument of \cite[4.3.22]{lurieDAG5} ensures that the resulting Grothendieck topologies generated by finite covering families are precanonical.


\begin{examples}
When we take  $\oT=\Com_R$ and $\oE$ the class of \'etale morphisms, then for the class $\oC_{\oE,\mathrm{DM}}$ as above, the theory of $ (n,\oC_{\oE,\mathrm{DM}})$-hypergroupoids in $DG^+\Aff_{\Com_R} $ is just the theory of strongly quasi-compact $n$-geometric derived  Deligne--Mumford  stacks over $R$. For the class $\oC_{\oE,\mathrm{Artin}}$, the theory of $ (n,\oC_{\oE,\mathrm{Artin}})$-hypergroupoids is just the theory of strongly quasi-compact $n$-geometric derived Artin  stacks.

When $\oT$ is the theory of EFC $\Cx$-algebras  and we take $\oE=\et$, consisting of  \'etale maps between globally finitely presented Stein spaces, then for the classes $\oC_{\et,\mathrm{DM}}$ (resp. $\oC_{\et,\mathrm{Artin}}$)  above, 
\cite[Lemma 3.3]{PortaGAGA1},
 \cite[Theorem \ref{stacks2-bigthm}]{stacks2} and Proposition \ref{DHomprop} imply that the $\infty$-category of $ (1,\oC_{\et,\mathrm{DM}})$-hypergroupoids in $DG^+\Aff_{\oT} $ contains as a full subcategory those 
derived $\Cx$-analytic spaces $X$ of \cite[Definition 12.3]{lurieDAG9} for which $t_0X$ is a $\Cx$-analytic space with global bounds on the numbers of generators and relations of its local rings.
 By Remark \ref{embeddingrmk}, the latter  condition  ensures that $t_0X$ has a finite  atlas by globally finitely presented Stein spaces
; to drop the condition, we would have to replace $DG^+\Aff_{\oT} $ with \'etale sheaves on  $DG^+\Aff_{\oT} $ given by coproducts of representables. 

Likewise, the $\infty$-category of $ (n,\oC_{\et,\mathrm{DM}})$-hypergroupoids in $DG^+\Aff_{\oT} $
(resp. $ (n,\oC_{\et,\mathrm{Artin}})$-hypergroupoids) in $DG^+\Aff_{\oT} $ contains as a full subcategory those Deligne--Mumford (resp. Artin) $(n-1)$-geometric derived $\Cx$-analytic stacks $\fX$ of 
  \cite[Definition 8.2]{PortaGAGA1} which satisfy global finite presentation conditions on $t_0\fX$ (which could be dropped if we were allow arbitrary coproducts of representables in our definition).

When $\oT$ is the theory of EFC $K$-algebras over a non-Archimedean field $K$, and we take $\oE=\et$, consisting of  \'etale morphisms, \cite[Theorem \ref{stacks2-bigthm}]{stacks2} and Proposition \ref{DHomprop}  imply that the $\infty$-category of $ (n,\oC_{\pro(\et),\mathrm{DM}})$-hypergroupoids (resp. $ (n,\oC_{\pro(\et),\mathrm{Artin}})$-hypergroupoids) in $DG^+\Aff_{\oT} $,  contains as a full subcategory a theory of Deligne--Mumford (resp. Artin) $n$-geometric $K$-dagger stacks $\fX$ analogous to the rigid derived analytic stacks of \cite{lurieDAG9}, equipped with finite presentation conditions on $t_0\fX$. The differences arise because we need to be able to cover objects with Stein (or pro-Stein) spaces, so either have to work with overconvergent functions or restrict to objects satisfying partial properness conditions, including on the higher diagonals. 

If we  take $\oT$ to be $\C^{\infty}$ and $\oE$ to be the class of local diffeomorphisms, then  $ (n,\oC_{\oE,\mathrm{DM}})$-hypergroupoids (resp. $ (n,\oC_{\oE,\mathrm{Artin}})$-hypergroupoids) in $DG^+\Aff_{\oT} $ give settings for derived differential geometry, as in \cite{nuitenThesis} 
\end{examples}

In addition to the general results of \cite[\S \ref{stacks2-ressn}]{stacks2} which hold for any geometric context, the results of \cite[\S\S \ref{stacks2-dsheaves} and \S \ref{stacks2-defsn}]{stacks2} adapt verbatim from derived algebraic geometry to models of derived geometry based on the classes $\oC_{\oE,\mathrm{DM}}$ and  $\oC_{\oE,\mathrm{Artin}}$ above, on the model category $DG^+\Aff_{\oT}$ for a rational Fermat theory $\oT$. More precisely, we have a theory of quasi-coherent sheaves arising as Cartesian complexes on our simplicial diagrams in $DG^+\Aff_{\oT}$. This leads to a theory of cotangent complexes on replacing algebraic K\"ahler differentials with Definition \ref{Omegadef}, and these govern deformations.

\begin{definition}
 Given $X \in DG^+\Aff_{\oT}$ corresponding to a $\oT$-DGA $\sO(X)$, we write $\pi^0X \in DG^+\Aff_{\oT}$ for the object corresponding to the $\oT$-algebra $\H_0\sO(X)$.
\end{definition}

\begin{proposition}\label{dgshfthm}
For a $\Cx$-analytic space (resp. a non-Archimedean $K$-dagger space) $Z$ and $m \ge 0$, the  $\infty$-category of     ($m$, \'etale)-geometric  (resp. ($m$, pro-\'etale)-geometric) derived stacks over $DG^+\Aff_{EFC,\Cx}$ (resp. $DG^+\Aff_{EFC,K}$)
 with underived truncation  $\pi^0X \simeq Z$ is equivalent to the $\infty$-category  of  presheaves
 $\sA_{\bt}$ of non-negatively graded EFC-DGAs on  the category of open Stein subspaces (resp. open dagger affinoid subspaces) of $Z$,  satisfying the following:
\begin{enumerate}
\item $\H_0(\sA_{\bt})= \O_{Z}$;
\item for all $i$, the presheaf $\H_i(\sA_{\bt})$ is a quasi-coherent $\O_{Z}$-module.
\end{enumerate}
In particular, the corresponding homotopy categories are equivalent.
\end{proposition}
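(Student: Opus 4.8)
The plan is to adapt the derived-algebraic comparison of \cite[\S\S \ref{stacks2-dsheaves} and \S \ref{stacks2-defsn}]{stacks2} between geometric stacks with a fixed truncation and sheaves of differential graded algebras, the only new ingredients being analytic. In one direction, I would start from the hypergroupoid presentation of \cite[Theorem \ref{stacks2-relstrict}]{stacks2}: an $(m,\et)$-geometric (resp. $(m,\pro(\et))$-geometric) derived stack $X$ with $\pi^0X \simeq Z$ is represented by a simplicial diagram in $DG^+\Aff_{\oT}$ whose partial matching maps are faithfully flat $\oE$-morphisms. Restricting the resulting structure sheaf to the small site of open Stein subspaces (resp. open dagger affinoid subspaces) $U \subseteq Z$ yields a presheaf of EFC-DGAs with $\sA_{\bt}(U) = \oR\sO(X|_U)$, the derived global functions on the restriction $X|_U$ to the open substack lying over $U$. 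Condition (1) then holds because $\H_0$ recovers the truncation $\pi^0X = Z$, and condition (2) holds because each affine chart is modelled by a cofibrant EFC-DGA whose positive-degree homology defines, via Lemma \ref{fpmodlemma} (resp. Lemma \ref{fpmodlemma2}) and passage to filtered colimits, a quasi-coherent $\O_Z$-module; here the vanishing of higher cohomology of coherent sheaves on Stein spaces (Cartan's, resp. Kiehl's, Theorem B) ensures that derived sections coincide with ordinary sections.

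For the converse, I would glue. Given a presheaf $\sA_{\bt}$ satisfying (1) and (2), condition (1) recovers the classical space $Z$ via $\H_0\sA_{\bt} = \O_Z$, and by Remark \ref{embeddingrmk} (resp. Remark \ref{embeddingrmk2}) $Z$ admits a finite atlas by globally finitely presented Stein spaces. Over this atlas the sections $\sA_{\bt}(U)$ assemble into a simplicial diagram in $DG^+\Aff_{\oT}$: here Proposition \ref{locmodelprop} ensures that restriction along an open immersion $V \subseteq U$ is the finite $\oE$-localisation, which by Proposition \ref{dgshrink} leaves the homotopy class unchanged, while condition (2) forces the partial matching maps to satisfy $\H_*B \cong \H_*A\ten_{\H_0A}\H_0B$ and hence to lie in $\oC_{\et,\mathrm{DM}}$ (resp. $\oC_{\pro(\et),\mathrm{DM}}$). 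The diagram is therefore an $(m,\oC_{\et,\mathrm{DM}})$-hypergroupoid (resp. $(m,\oC_{\pro(\et),\mathrm{DM}})$-hypergroupoid), representing a geometric stack of the required type by \cite[Theorem \ref{stacks2-relstrict}]{stacks2}.

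That these functors are mutually quasi-inverse I would verify by Postnikov induction on the structure sheaf, exactly as in the proof of Proposition \ref{DHomprop}. The base case $\bO_X \simeq \pi_0\bO_X$ is the classical comparison of Proposition \ref{fpalgprop} (resp. Proposition \ref{fpalgprop2}) identifying open Stein subspaces with finite localisations of EFC-algebras. For the inductive step, the truncation map $t_{\le n}X \to t_{\le n+1}X$ is an analytic square-zero extension by $\pi_{n+1}\bO_X^{\alg}[n+1]$; applying $\oR\sO$ produces a split square-zero extension of EFC-DGAs, and Proposition \ref{compcotprop} shows that both extensions are classified by the same cotangent-complex datum, so the equivalence propagates up the tower. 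Passing to the homotopy limit, using $X \simeq \ho\LLim_n t_{\le n}X$ with the dual $\sA_{\bt} \simeq \ho\Lim_n \tau_{\le n}\sA_{\bt}$, gives the stated equivalence of $\infty$-categories, and hence of homotopy categories.

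The step I expect to be the main obstacle is the descent in the converse functor: showing that an abstract presheaf with merely quasi-coherent homology, rather than an \emph{a priori} coherent system of affine charts, really does glue to a geometric stack. The delicate point is that open immersions of Stein (resp. dagger affinoid) spaces must correspond to finite $\oE$-localisations of the rings of functions, which is precisely why the non-Archimedean statement is forced to use dagger spaces: admissible opens of a rigid space need not factorise through open immersions of Stein spaces, whereas for overconvergent functions they do. Propositions \ref{locmodelprop} and \ref{dgshrink} are exactly the tools that render this local-to-global passage homotopy-invariant.
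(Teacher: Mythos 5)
Your two functors are essentially the paper's own: the forward direction is the derived pushforward of the \'etale-localised structure sheaf along a hypercover $a \co \tilde{Z}=\pi^0\tilde{X} \to Z$ by Stein spaces (the paper's formula is $\sA_{\bt}:=\oR a_*(\sO(\tilde{X})/\O_{\tilde{Z}})^{l,\et}$), and the converse is evaluation of $\sA_{\bt}$ on such a hypercover, with Proposition \ref{dgshrink}, Lemmas \ref{Homlemma} and \ref{HomlemmaNonarch}, and Examples \ref{openetaleexamples} supplying exactly the analytic inputs you identify; the paper then runs the proof of \cite[Theorems \ref{stacks2-lshfthm}, \ref{stacks2-hshfthm} and \ref{stacks2-dgshfthm}]{stacks2} verbatim, plus the observation that open immersions of dagger affinoids are inverse limits of open immersions of Stein spaces.

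The genuine gap is in your verification that the functors are quasi-inverse. You propose to run the Postnikov induction of Proposition \ref{DHomprop}, classified by analytic square-zero extensions and Proposition \ref{compcotprop}. But those tools belong to the Lurie/Porta--Yu structured-topos framework: they apply to derived $K$-analytic spaces $(\fX,\bO_X)$, and moreover require $t_0$ to be a finitely embeddable Stein space. The objects of Proposition \ref{dgshfthm} are $m$-geometric stacks over $DG^+\Aff_{EFC,\Cx}$ and presheaves of EFC-DGAs over an arbitrary analytic space $Z$; at this stage of the paper nothing identifies them with structured topoi. The functor that would effect such an identification, $X \mapsto X_{\mathrm{str}}$ of Proposition \ref{DHomprop2}, takes as its input precisely a pair $(\pi^0X,\sA_{\bt})$ as produced by Proposition \ref{dgshfthm}, so invoking that bridge here is circular; Proposition \ref{DHomprop} itself only treats derived analytic \emph{spaces}, not $m$-geometric stacks; and in the dagger setting there is no off-the-shelf structured-topos theory to quote at all (Remark \ref{DHomrmk}). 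The verification has to be carried out inside the DG framework, which is what the citation of \cite{stacks2} accomplishes: once every chart is replaced by its $(-)^{l\oE}$-localisation (a quasi-isomorphism by Proposition \ref{dgshrink}), restriction along open immersions of Stein subspaces becomes a genuine $\ind(\oE)$-morphism, and hypercover descent for the topology of Examples \ref{openetaleexamples} identifies the hypergroupoid with the presheaf of its derived sections --- no cotangent complexes or square-zero extensions enter. Two smaller slips: Remark \ref{embeddingrmk} yields an open cover of $Z$ by globally finitely presented Stein spaces, not a \emph{finite} atlas, so you need countable disjoint unions of Stein spaces (and Lemma \ref{Homlemmacoprod}) for the atlas to be an object of $DG^+\Aff_{EFC,\Cx}$; and quasi-coherence of $\H_i\sA_{\bt}$ gives the base-change isomorphism $\H_*B\cong \H_*A\ten_{\H_0A}\H_0B$ only after you know the restriction maps on $\H_0$ are the localisations corresponding to open immersions, which is where Lemmas \ref{Homlemma} and \ref{HomlemmaNonarch} are genuinely needed rather than merely convenient.
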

\begin{proof}
This is a consequence of Lemmas \ref{Homlemma}, \ref{HomlemmaNonarch}, Examples \ref{openetaleexamples}  and Proposition \ref{dgshrink}, with the same proof as \cite[Theorems \ref{stacks2-lshfthm}, \ref{stacks2-hshfthm} and \ref{stacks2-dgshfthm}]{stacks2}, and the observation that open immersions of dagger affinoids arise as inverse limits of open immersions of Stein spaces. Explicitly, for an $(m+1)$-geometric hypergroupoid $\tilde{X}$ in $DG^+\Aff_{EFC}$ with a map  $a\co  \tilde{Z}:=\pi^0\tilde{X}\to Z$ realising $\tilde{Z}$ as a simplicial hypercover of $Z$ by  Stein spaces, we set 
\[
 \sA_{\bt}:= \oR a_* (\sO(\tilde{X})/\O_{\tilde{Z}})^{l,\et}.\qedhere
\]
\end{proof}

\subsection{Comparison with structured topoi}

\begin{definition}
 Given a pair $X=(\pi^0X, \sA_{\bt})$ as in  Proposition \ref{dgshfthm} and an open Stein submanifold $U \subset \Cx^n$, define a simplicial hypersheaf $\bO_X(U)$ on the site of Stein open submanifolds of $\pi^0X$ by $V \mapsto \map_{EFC-DGA}(\sO(U), \sA_{\bt}(V))$. Write $X_{\mathrm{str}}$ for the pair $(\pi^0X,\bO_X)$. 
\end{definition}

\begin{proposition}\label{DHomprop2}
In the complex analytic setting, given a pair $X=(\pi^0X, \sA_{\bt})$ as in  Proposition \ref{dgshfthm} for which the quasi-coherent sheaves $\H_i\sA_{\bt}$ are coherent on $\pi^0X$, 
the data $X_{\mathrm{str}}=(\pi^0X,\bO_X)$ determine a derived $\Cx$-analytic space in the sense of \cite[Definition 12.3]{lurieDAG9}. This construction forms a left inverse to the functor $Y \mapsto (\fY, \oR\sO_Y)$ of Definition \ref{ROdef} when restricted to  derived $\Cx$-analytic  spaces $Y$ for which $t_0Y$ is an analytic space.
\end{proposition}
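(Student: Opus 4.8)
The plan is to prove the two assertions separately. First I would check that $X_{\mathrm{str}}=(\pi^0X,\bO_X)$ satisfies Lurie's axioms \cite[Definition 12.3]{lurieDAG9} for a derived $\Cx$-analytic space; the underlying $\infty$-topos $\fX$ is already the (hypercomplete, $1$-localic) topos of hypersheaves on the analytic space $\pi^0X$, so the content is entirely in the structure sheaf. Then I would establish the left-inverse property, which will follow almost formally from Proposition \ref{DHomprop}. Throughout, the coherence hypothesis on the $\H_i\sA_{\bt}$ is what upgrades the \emph{a priori} quasi-coherent output of Proposition \ref{dgshfthm} to a genuine derived analytic space, and it must be tracked in both halves.

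For the first assertion I would verify the structure-sheaf conditions in turn. That $\bO_X$ is product-preserving, hence a $\oT_{an}(\Cx)$-structure, follows from the coproduct identity $\sO(U_1\by U_2)\cong\sO(U_1)\odot\sO(U_2)$ together with the fact that mapping out of a coproduct of EFC-DGAs computes a product of mapping spaces. Locality --- that admissible (open) covers go to effective epimorphisms and that pullbacks along open immersions go to pullback squares --- is where the work lies: using the correspondence between open immersions of Stein spaces and finite localisations in $\oE$ (Examples \ref{openetaleexamples}), the unique factorisation of Lemma \ref{UFSlemma}, and the explicit description of mapping spaces in Lemma \ref{mapSteinlemma}, one reduces these to the sheaf conditions $\H_0\sA_{\bt}=\O_{\pi^0X}$ and quasi-coherence of the $\H_i\sA_{\bt}$ built into Proposition \ref{dgshfthm}. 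The underived truncation $(\pi^0X,\pi_0\bO_X)$ recovers the analytic space $\pi^0X$ via Lemma \ref{Homlemma}, since on $\pi_0$ the mapping space collapses to $\Hom_{EFC}(\sO(U),\O_{\pi^0X}(V))=\mathrm{Hol}(V,U)$. Finally, Lemma \ref{mapSteinlemma} identifies the higher algebraic homotopy sheaves $\pi_i\bO_X^{\alg}$ with $\H_i\sA_{\bt}$, which are coherent by hypothesis, supplying the last of Lurie's requirements.

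For the left-inverse property, let $Y=(\fY,\bO_Y)$ be a derived $\Cx$-analytic space with $t_0Y$ an analytic space. Since $\H_i\oR\sO_Y\cong\pi_i\bO_Y^{\alg}$ is coherent, the pair $(\fY,\oR\sO_Y)$ falls within the hypotheses of this proposition, and applying $(-)_{\mathrm{str}}$ yields a structure sheaf whose value on an open Stein submanifold $U\subset\Cx^n$ is $V\mapsto\map_{EFC-DGA}(\sO(U),\oR\sO_Y(V))$. As $U$ is underived we have $\oR\sO(U)=\sO(U)$, and $\oR\sO_Y(V)=\oR\sO(V,\bO_Y|_V)$ by Definition \ref{ROdef}, so Proposition \ref{DHomprop}, applied to the finitely embeddable Stein space $U$ and the derived analytic space $(V,\bO_Y|_V)$ (whose truncation is analytic), gives a natural equivalence
\[
\map_{EFC-DGA}(\sO(U),\oR\sO_Y(V))\simeq\map_{\mathrm{dAn}}((V,\bO_Y|_V),U)=\bO_Y(U)(V).
\]
These equivalences are natural in $U$ and $V$ and compatible with the $\oT_{an}(\Cx)$-structure, whence $(\fY,\oR\sO_Y)_{\mathrm{str}}\simeq(\fY,\bO_Y)=Y$.

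The main obstacle I anticipate is the locality of $\bO_X$: one must confirm that the hypersheaf conditions of Proposition \ref{dgshfthm} translate \emph{exactly} into Lurie's demand that admissible pullback squares and covers in the pregeometry of Stein spaces be preserved. This is precisely the point at which Proposition \ref{dgshrink} and the finite-localisation model structure of Proposition \ref{locmodelprop} are indispensable, since they let one compute mapping spaces out of $\sO(U)$ after passing to $\oE$-localisations, where open immersions become cofibrations. A secondary, more bookkeeping-heavy point is to match conventions between the hypersheaf formulation used here and Lurie's structured-topos formulation --- in particular to check that the algebraisation functor sends $\bO_X$ to the expected underlying sheaf of derived rings, so that the coherence of the $\H_i\sA_{\bt}$ really does furnish Lurie's condition on $\pi_i\bO_X^{\alg}$.
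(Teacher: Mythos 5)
Your verification of the structure-sheaf axioms follows the paper's proof in all essentials: products and pullbacks along open immersions are handled by computing homotopy coproducts/pushouts in the $\oE$-model structure of Proposition \ref{locmodelprop}, where the algebras $\sO(U)$ are cofibrant (this cofibrancy, not merely the identity $\sO(U_1\by U_2)\cong \sO(U_1)\odot\sO(U_2)$, is what makes mapping out of the strict coproduct compute a product of derived mapping spaces), and the covering condition is reduced via Lemma \ref{mapSteinlemma} to the underived statement that $\bO_{\pi^0X}$ is a $\cT_{\an}$-structure. However, you underestimate one step. The functor $\bO_X$ is defined only on open Stein submanifolds of affine spaces, with local biholomorphisms as admissible maps, so what your verifications produce is a structure for \emph{that} pregeometry $\cT^0_{\mathrm{Stein},\Cx}$, not yet a derived $\Cx$-analytic space in the sense of \cite[Definition 12.3]{lurieDAG9}, which requires a $\cT_{\an}$-structure. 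The paper bridges this by a Morita equivalence of pregeometries between $\cT^0_{\mathrm{Stein},\Cx}$, $\cT_{\mathrm{Stein}}$ and $\cT_{\an}$, citing \cite[Remark 11.3]{lurieDAG9} and \cite[Proposition 3.2.8 and Remark 4.4.2]{lurieDAG5}. This is a genuine (if citable) ingredient, and it is not the same issue as the ``algebraisation/bookkeeping'' point you flag in your last paragraph; as written, your argument stops one step short of Lurie's definition.

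For the left-inverse assertion you take a genuinely different route from the paper, and it works. You apply Proposition \ref{DHomprop} locally, with target $U$ (finitely embeddable by Remmert's theorem) and source $(V,\bO_Y|_V)$, together with $\oR\sO(U)\simeq \sO(U)$ and the spectrum universal property $\bO_Y(U)(V)\simeq \map_{\mathrm{dAn}}((V,\bO_Y|_V),U)$, to identify the structure sheaf of $(\fY,\oR\sO_Y)_{\mathrm{str}}$ with $\bO_Y$ objectwise; this yields the equivalence at every object $U$ of the pregeometry simultaneously. The paper instead constructs the comparison $(t_0Y,\oR\sO_Y)_{\mathrm{str}}\to Y$ directly from functoriality and checks it is an equivalence only on algebraic homotopy sheaves, via $\pi_i\bO_X^{\alg}\cong \H_i\oR\sO_Y\cong \pi_i\bO_Y^{\alg}$ --- a cheaper check, but one that implicitly uses that equivalences of $\cT_{\an}$-structures are detected on $\bO^{\alg}$. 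Your version spends the full strength of Proposition \ref{DHomprop} to avoid appealing to that detection principle; either argument is acceptable, and yours has the advantage of making the naturality in $U$ explicit.
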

\begin{proof}
 Open Stein submanifolds of affine space form a pregeometry  $\cT^0_{\mathrm{Stein},\Cx}$ in the sense of  \cite[Definitions 1.2.1 and 3.1.1]{lurieDAG5}, with admissible morphisms the local biholomorphisms, and  
Grothendieck topology generated by covering families of open immersions. We now check that  $\bO_X$ defines a  $\cT^0_{\mathrm{Stein},\Cx}$-structure in the sense of \cite[Definition 3.1.4]{lurieDAG5}:

\begin{enumerate}
 \item We first need to show that $\bO_X$ preserves finite products, which  amounts to saying that for $U,V \in \cT^0_{\mathrm{Stein},\Cx}$, we have a quasi-isomorphism $\sO(U\by V) \simeq \sO(U)\odot^{\oL}\sO(V)$ of EFC-DGAs. Since $\sO(U\by V)\simeq \sO(U)\odot\sO(V)$, this reduces to the observation that Proposition  \ref{locmodelprop} allows us to calculate homotopy pushouts using the \'etale model structure on EFC-DGAs, in which $\sO(U)$ and $\sO(V)$ are cofibrant.

\item We now need to show that $\bO_X$ preserves pullbacks along admissible morphisms (i.e. open immersions). For 
morphisms $U \to W \la V$  in $\cT^0_{\mathrm{Stein},\Cx}$ with $U \to W$ an open immersion, this amounts to showing that $\sO(V\by_XU) \simeq \sO(U)\odot^{\oL}_{\sO(X)}\sO(V)$. Since all these objects are cofibrant in the \'etale
model structure, and one morphism is a cofibration, this follows from \cite[Proposition 13.1.2]{Hirschhorn}. 

\item If $\{U_{\alpha} \to W\}_{\alpha}$ is an open covering in $\cT^0_{\mathrm{Stein},\Cx}$, then we need to show that
\[
 \coprod_{\alpha} \bO_X(U_{\alpha}) \to \bO_X(W) 
\]
is an effective epimorphism of simplicial hypersheaves on $\pi^0X$. This amounts to saying that the map
\[
 \coprod_{\alpha} \pi_0\map_{EFC-DGA}(\sO(U_{\alpha}), \sO_X) \to\pi_0\map_{EFC-DGA}(\sO(W), \sO_X)  
\]
of set-valued presheaves induces a surjection on sheafification.
By Lemma \ref{mapSteinlemma}, we have $\pi_0\map(\sO(U),\sO_X(V)) \cong \Hom_{EFC}(\sO(U),\sO_{\pi^0X}(V))$ for any $U \in \cT^0_{\mathrm{Stein},\Cx}$. This condition then follows from \cite[Lemma 12.6]{lurieDAG9}, which shows that $\bO_{\pi^0X}$ is a $\cT_{\an}$-structure.
\end{enumerate}

It follows from \cite[Remark 11.3]{lurieDAG9} and \cite[Proposition 3.2.8 and Remark 4.4.2]{lurieDAG5} that $\cT^0_{\mathrm{Stein},\Cx}$ is Morita  equivalent in the sense of \cite[Definition 3.2.2]{lurieDAG5}  to the  pregeometry $\cT_{\mathrm{Stein}}$ of \cite[Definition 4.4.1]{lurieDAG5} or the pregeometry $\cT_{\an}$ of \cite[Definition 11.1]{lurieDAG9}. 

Thus $\bO_X$ determines a $\cT_{\an}$-structure on the analytic site of $\pi^0X$. Since the Dold--Kan normalisation of $\bO_X^{\alg}$ is equivalent to $\sA_{\bt}$, we have  $\pi_i\bO_X^{\alg} \cong \H_i\sA_{\bt}$, which is coherent on $\pi^0X$, meaning that $(\pi^0X, \bO_X)$ is a derived analytic space.

Finally, if $Y$ is a derived analytic space, then the hypersheaf $\oR\sO_Y$ of EFC-DGAs gives an EFC structure on the Dold--Kan normalisation of $\bO_Y^{\alg}$. In particular, each $\H_i\oR\sO_Y$   is coherent on $t_0Y$, so $(t_0Y, \oR\sO_Y)$ satisfies the conditions of Proposition \ref{dgshfthm} whenever $t_0Y$ is an analytic space. For each  $U \in \cT^0_{\mathrm{Stein},\Cx}$, functoriality gives a map $\bO_Y(U)\to \map_{EFC-DGA}(\sO(U), \oR\sO_Y)$, and hence a natural transformation  $ (t_0Y, \oR\sO_Y)_{\mathrm{str}} \to Y$. That this is an equivalence follows because when $X= (t_0Y, \oR\sO_Y)$, we have $\pi_i\bO_X^{\alg} \cong \H_i\oR\sO_Y\cong \pi_i\bO_Y^{\alg}$. 
\end{proof}

\begin{remark}\label{DHomrmk}
Although Proposition \ref{DHomprop2} demonstrates that the $\cT_{\an}$-structure on a derived complex affine scheme is determined by its restriction to the   subcategory of affine spaces, it does not follow that the cocompact objects of  $DG^+\Aff_{EFC,\Cx}$ form a geometric envelope for $\cT_{\an}$  in the sense of \cite[Definition 3.4.1]{lurieDAG5}. Proposition \ref{locmodelprop} ensures that there is a natural map from the geometric envelope of  $\cT^0_{\mathrm{Stein},\Cx}$ to  $DG^+\Aff_{EFC,\Cx}$. Part of the proof of \cite[Proposition 4.2.3]{lurieDAG5} adapts to give a full and faithful embedding of $DG^+\Aff_{EFC,\Cx}$ in the geometric envelope by Kan extension, 
 and the coherence conditions on finitely embeddable derived Stein spaces just happen to ensure that they lie in its image.

In the non-Archimedean setting of \cite{PortaYuNonArch}, the inverse functor to Proposition \ref{DHomprop} is harder to describe, because affinoids are not Stein. However, if we modify \cite{PortaYuNonArch} to replace $\cT_{\an}(K)$ with smooth dagger analytic spaces (so take overconvergent functions)   or with partially proper smooth analytic spaces $U$, then  $V \mapsto \map_{EFC-DGA}(\sO(U), \sA_{\bt}(V))$ again defines a $\cT$-structured topos.
\end{remark}

\subsection{Analytification}

For any rational Fermat theory $\oT$, the forgetful functor from $\oT$-DGAs to rational CDGAs has a left adjoint. For our EFC-DGAs over $K$ (where  $K$ is $\Cx$ or a non-Archimedean field), there is a forgetful functor to $K$-CDGAs, whose left adjoint we regard as analytification, denoted $A\mapsto A^{\an}$. When $X$ is an affine $K$-scheme of finite type, the EFC-algebra $\Gamma(X,\sO_X)^{\an}$ is then the ring of global functions of the analytic space associated to $X$. Since analytification preserves \'etale morphisms, it gives a functor from algebraic Deligne--Mumford (resp. Artin) $n$-stacks to $K$-analytic Deligne--Mumford (resp. Artin) $n$-stacks, via the constructions of \S \ref{stacksn}. Explicitly, if $X_{\bt}$ is a DM (resp. Artin)  $n$-hypergroupoid in affine DG schemes, then $X_{\bt}^{\an}$ is a DM (resp. Artin)  $n$-hypergroupoid in $DG^+\Aff_{\oT} $, and this construction preserves hypercovers, giving a functor on $n$-stacks. The details follow exactly as in \cite[Appendix A.4]{
DiNataleHolsteinGlobal}. 

\subsection{Symplectic and Poisson structures, and quantisations}

A useful feature of differential graded models for derived analytic geometry  is that they make the formulation of shifted Poisson structures, and also of deformation quantisations, fairly straightforward, in terms of multiderivations or differential operators on $\oT$-DGAs. For the theory of $\C^{\infty}$-DGAs, this is all laid out in \cite{DQDG}, but the same constructions extend to any rational Fermat theory with a class $\oE$ of morphisms as in Assumption \ref{openetale}. The most important technical result enabling this to work is Lemma \ref{Cunramlemma}, which provides \'etale functoriality of Poisson structures and quantisations.   Note that \cite[Remarks \ref{DQDG-htpyfdrmkd1} and \ref{DQDG-htpyfdrmkd2}]{DQDG} give the correct generality in which to operate, and the results of \cite{poisson} on shifted Poisson structures and of \cite{DQvanish,DQnonneg,DQLag,DQpoisson} on shifted quantisations all then extend from derived algebraic  to derived analytic settings. 
The constructions are compatible with analytification functors, essentially because any algebraic differential operator on a ring gives rise to an analytic differential operator on its analytification. 



\bibliographystyle{alphanum}
\bibliography{references.bib}

\end{document}